\providecommand{\keywords}[1]
{
  \small	
  \textbf{\textbf{Keywords.}} #1
} 
\newcommand{\ddr}{\mathrm{d}}
\theoremstyle{plain}
\theoremstyle{plain}
\newtheorem{theorem}{Theorem}[section]
\newtheorem{thA}{Theorem}
 \newtheorem{lemma}[theorem]{Lemma}
 \newtheorem{example}{Example}[section]
 \theoremstyle{definition}
 \newtheorem{proposition}{Proposition}[section]
 \theoremstyle{remark}
 \title{On the coming down from infinity of continuous-state branching processes with drift-interaction}
 \author{Félix Rebotier\footnote{CMAP, Ecole Polytechnique. Email: felix.rebotier@polytechnique.edu}}
\begin{document}
\maketitle
\begin{abstract} 
We study the phenomenon of coming down from infinity – that is, when the process
starts from infinity and never returns to it – for continuous-state branching processes with
generalized drift. We provide sufficient conditions on the drift term and the branching
mechanism to ensure both non-explosion and coming down from infinity, without requir-
ing the associated jump measure to have a finite first moment. Assuming the process comes down from infinity and the drift satisfies a one-sided Lipschitz condition, we show that, as the initial values tend to infinity, the process converges locally uniformly almost surely to the strong solution of a stochastic differential equation. The main techniques employed are comparison principles for solutions of stochastic equations and the method of Lyapunov functions, the latter being briefly reviewed in a broader setting.
\end{abstract}
\keywords{Continuous-state branching process, Interaction, Entrance boundary, Coming down from infinity, Stochastic equation with jumps, Lyapunov functions}

\section{Introduction}
Continuous-state branching processes (CBs) form a well-established class of non-negative Markov processes with no negative jumps; we refer for instance to the books of Kyprianou \cite[Chapter 12]{Kyprianou} and of Li~\cite[Chapter 3]{ZLi}. They model random continuous population in which individuals reproduce and die independently according to a common law. Over the past two decades, the CB processes have been generalized in various directions to incorporate for instance some phenomena of interaction. Notably, Lambert, in a seminal work \cite{Lambert}, has defined the class of CB processes with logistic growth.  In this framework, ``pairwise competition" among individuals is modeled by adding a negative quadratic drift term to the dynamics. This competition term has been then generalized to other functions allowing for more general competition pressure (with negative or positive density-dependence) on the population according to its size, see for instance Berestycki et al. \cite{Berestycki}, Le and Pardoux \cite{LePardoux}, Li and al. \cite{LWZqsd}, Pardoux \cite{Pardouxbook}  and Dram\'e and Pardoux \cite{DramePardoux}. Let us also mention the recent work of Li et al. \cite{LYZ}, where a more general class, termed nonlinear branching processes, is introduced. 
\smallskip

In this article, we focus on CB processes with a generalized drift.   The latter arise as scaling limits of Bienaymé-Galton-Watson processes with interaction, see \cite{DramePardoux} and Bansaye et al. \cite{BansayeCaballeroMeleard}. We introduce the term continuous-state branching processes with \textit{drift-interaction}, abbreviated as $\mathrm{CBDIs}$, to refer to this class of models. More precisely, we refer to any process that is a solution to a stochastic equation of the following form as a $\mathrm{CBDI}$ process:
\begin{equation}
\label{eds}
\begin{split}
X_t = X_0 + \sigma \int_0^t \sqrt{X_s} \, \ddr B_s &- \gamma \int_0^t X_s \, \ddr s + \int_0^t \int_0^{X_{s-}} \int_{(0,1]} h \, \tilde{M}(\ddr s,\ddr u,\ddr h) \\
&+ \int_0^t \int_0^{X_{s-}} \int_{(1,\infty]} h \, M(\ddr s,\ddr u,\ddr h) - \int_0^t I(X_s) \, \ddr s, \quad t < \zeta,
\end{split}
\end{equation}
where $X_0$ is a positive random variable, $\sigma\geq 0$, $\gamma\in \mathbb{R}$ and $M$ is a Poisson random measure (PRM) on $\mathbb{R}_+\times \mathbb{R}_+\times [0,\infty]$ with intensity $m(\ddr s,\ddr u,\ddr h):=\ddr s\otimes \ddr u\otimes\pi(\ddr h)$, $\pi$ being a L\'evy measure, with possibly a mass at $\infty$, satisfying $\int_0^\infty 1\wedge h^2\pi(\ddr h)<\infty$. The random signed measure $\tilde{M}$ denotes the compensated PRM, that is $\tilde{M} := M - m$. The function $I:\mathbb{R}_+\to \mathbb{R}$ is an additional general drift term. Finally, the random variable $\zeta$ in \eqref{eds}, referred to as the lifetime of $X$, is defined as the first exit time from the interval $(0,\infty)$ of the process $X$. 

\medskip
The stochastic integrals and drift terms in \eqref{eds} have the following heuristic interpretation.
\begin{itemize}
\item[(i)] The reproduction is governed by a continuous and a jump term: 
\begin{itemize}
    \item The continuous terms include a linear drift $-\gamma \int_0^t X_s\, \ddr s$, representing a deterministic exponential growth or decay, and the classical Feller diffusion term $\sigma \int_0^t \sqrt{X_s}\, \ddr B_s$, which arises for instance as the scaling limit of discrete binary branching.  Alternatively\footnote{This will play a role later in the article as it allows one to construct  $\mathrm{CBDIs}$ for all initial values on the same probability space.}, following Dawson and Li \cite{DawsonLi}, for a given time-space Gaussian white noise $W$ on $\mathbb{R}_+ \times \mathbb{R}_+$ with intensity $\ddr s \ddr u$, there exists a Brownian motion $B$ such that  
 \begin{equation}\label{whitenoise}
\sigma \int_0^t \int_0^{X_{s-}} W(\ddr s,\ddr u)=\sigma \int_0^t \sqrt{X_{s-}}\, \ddr B_s.
 \end{equation}
Loosely speaking, this formulation reflects the idea that, continuously in time, each individual in the population contributes an infinitesimal positive or negative random mass, modeling random fluctuations at the individual level.
\item The jump terms model large reproduction events. At an atom of time $s$ of $\mathcal{M}$, an individual $u$ is chosen uniformly at random in $[0,X_{s-}]$ and reproduces according to the jump measure $\pi$. The integral on $(0,1]$ with respect to $\tilde{M}$, represents small reproduction events counterbalanced by bursts of negative drift, which loosely speaking can be thought as ``natural death". The integral over $(1,\infty]$, with  no compensation (finite variation parts) corresponds to large reproduction events.
\end{itemize}

\item[(ii)] The drift term $-\int_0^t I(X_s)\,\ddr s$ encodes interaction effects such as \textit{competition} or \textit{cooperation}. When $I$ is differentiable, this term can be rewritten as
\[
-\int_0^t I(X_s)\, \ddr s = -\int_0^t \int_0^{X_s} I'(u)\, \ddr u\, \ddr s - I(0)t
\]
and we may interpret it as an additional deterministic effect at the level of individuals. Each individual $u$ in the population $[0,X_{s}]$ contributes deterministically to the total drift through a rate $I'(u)\,\ddr u$, which either slows down or accelerates the population growth. Loosely speaking, a positive derivative $I'(u) > 0$ corresponds to competition, while a negative derivative reflects cooperative effects. Thus, for a given ``individual" $u$, there is an additional infinitesimal rate $I'(u)\ddr u$, either of death or of birth, depending on the sign of $I'(u)$. The term $-I(0)$, when positive, can be seen as a constant immigration rate. 
\end{itemize}

Denote by $\Psi$ the Lévy-Khintchine function associated to the triplet ($\sigma$,$\gamma$, $\pi$):
\begin{equation}
\label{eq LK}
\Psi(z) = -\lambda + \frac{\sigma^2}{2} z^2 + \gamma z + \int_{(0,\infty)} \left( e^{-zh} - 1 + zh \mathbf{1}_{[0,1]}(h) \right) \pi(\ddr h), \qquad z\in [0,\infty).
\end{equation}
 It governs the branching dynamics and is referred to as the branching mechanism.  The parameter $\lambda\geq 0$  stands for a killing term. It can be encompassed in the L\'evy measure $\pi$ as a mass at $\infty$, $\pi(\{\infty\})=\lambda$, and interpreted as the rate of a jump to $\infty$. \vspace{2mm}
 
 When $I \equiv 0$, the solution to the stochastic equation \eqref{eds} is a $\mathrm{CB}$ process with mechanism $\Psi$, see e.g. \cite{ZLi,JiLi}. In particular, the process $X$ satisfies the branching property, namely, for all $x,y\in [0,\infty]$, $\mathbb{P}_{x+y}=\mathbb{P}_x\star \mathbb{P}_y$, where $\mathbb{P}_x$ denotes the law of the process $X$ with $X_0=x$, and both boundaries $\infty$ and $0$ are absorbing. 
 
The primary object of interest in this article is the case when the function $I$ is not linear. In this setting, the process $X$, solution to \eqref{eds}, is not a CB process, as in particular it does not satisfy the branching property. 
The drift term $I$ leads to 
richer behaviors at the boundaries  than those of CBs. In particular, when the drift compensates the large jumps sufficiently, the process may start from infinity, meaning that the process started from $X_0 = \infty$ -- a proper definition is given in the forthcoming Section \ref{section:lyapfun} -- becomes almost surely finite at any strictly positive time. The latter phenomenon is also called  in the literature \textit{coming down from infinity}. 


The logistic branching process, defined in \cite{Lambert}, corresponds to the specific case where the function $I$ is given by $x\mapsto \frac{c}{2}x^2$. A complete classification of the boundaries has been carried out for this model in  Foucart \cite{FEJP}.  Numerous works have addressed extinction, explosion, and entrance-type behaviors for other generalized CB processes. We refer for closely related studies to \cite{LePardoux}, Le \cite{VLe}, Palau and Pardo \cite{PalauPardo}, Leman and Pardo \cite{LemanPardo}, and \cite{LYZ}.

To the best of our knowledge, with the exception of the logistic case, most studies in the literature assume that the L\'evy measure satisfies certain moment conditions, typically a finite first or log moment. In this article, we initiate the study of the boundary behavior of CBDI processes under relaxed moment conditions. This serves as a foundation for a broader investigation - incorporating the possibility of regular boundaries - which will be addressed in a forthcoming work.

We first establish explicit sufficient conditions for non-explosion and coming down from infinity. This completes previous results obtained in \cite{LYZ} and \cite{LePardoux} by allowing the branching dynamics to be driven by Lévy measures with infinite mean.  Our approach is based on comparison properties of stochastic equations and classical Lyapunov-type methods, which differs from those employed in the aforementioned works, and will be reexplained in a broad context. 

The second contribution of this article is the analysis of the stochastic equation solved by the process starting from infinity. We will adapt techniques from the works of Li \cite{PLi}, Bansaye \cite{Bansaye}, and Bansaye et al. \cite{BansayeColletMartinezMeleard}, which deal with related but distinct models.

The structure of the paper is as follows. Section~\ref{sec:prelminaries} gathers some fundamental facts about existence and uniqueness of the solution to the stochastic equation \eqref{eds}, as well as comparison theorems, the Markov property and the identification of its infinitesimal generator. Section~\ref{section:main} presents the main results. Before providing proofs, we give in the autonomous Section \ref{sec:lyapunov}, general background on the method of Lyapunov functions, see Theorem \ref{th cond lyapunov}. The proofs of the main results are gathered in Section~\ref{section:proofs}. Section~\ref{section:lyapfun} introduces some Lyapunov functions for $\mathrm{CBDIs}$ and applies Theorem \ref{th cond lyapunov} to establish Theorem~\ref{th descente inf}. Section~\ref{section:regularity} investigates the regularity of the process with respect to the initial condition. Section~\ref{section:edsinf} adresses the stochastic differential equation satisfied by the process coming down from infinity. 

\section{Preliminaries}\label{sec:prelminaries}
\textbf{Notations.} For any real function $f$, we denote by $D_f$ its domain of definition. We also introduce the following spaces of functions: $C^1$ (resp. $C^2$) is the continuously (resp. twice) differentiable functions on their domain. The space \(C_b^2\) denotes the set of twice continuously differentiable functions that are bounded.


 We denote by $[0,\infty]$  the extended half-line.  Any integral term written as $\int_s^{t}\dots$ is meant to be $\int_{(s,t]}\dots$ for $s\in [0,\infty)$ and $t\in (0,\infty]$. For any measure $\pi$  on the Borelian sets of $[0,\infty]$, we denote its tail  by $\overline{\pi}(u):=\pi([u,\infty])$ for all $u>0$. 
 
 Set $e^{-\infty}:=0$ and endow $[0,\infty]$ with the compact metric :
$$d(x,y)=|e^{-x}-e^{-y}|, \quad (x,y)\in [0,\infty].$$
The Skorokhod space of $[0,\infty]$-càdlàg functions, is denoted by $D$, carrying the uniform distance $\rho_\infty$:  
\begin{equation}
    \label{def rhoinf} \forall u,v \in D, \quad \rho_{\infty}(u,v)=\underset{t\geq 0}{\sup} \ d(u(t),v(t)).
\end{equation}
For any positive càdlàg process $X$ and $x\in [0,\infty)$, we denote by $X^x$ the process with initial state $x$. For $a \in (0 ,\infty]$, $b \in [0,\infty)$ and $x\in  [0,\infty)$, we define the first  passage times below level $a$ and above $b$ as follows:

\begin{equation*}
\tau_a^{x,-}=\inf\{t\geq 0, \ X^x_t\leq a\}, \quad
\tau_b^{x,+}=\inf\{t\geq 0, \ X^x_t\geq b\}.
\end{equation*}
We extend this definition to $a=0$ and $b=\infty$, by taking the a.s. limits of the previous random times:
\begin{equation*}
\tau_0^{x,-}=\underset{a\rightarrow 0}{\lim} \  \tau_a^{x,-}, \quad
\tau_{\infty}^{x,+}=\underset{b\rightarrow \infty}{\lim} \ \tau_b^{x,+}.
\end{equation*}
These definitions are valid since the maps $[0,x]\ni a\mapsto \tau_a^{x,-}$ and $[x,\infty)\ni b\mapsto \tau_b^{x,+}$ are respectively non-increasing and non-decreasing a.s.. To alleviate the notations, we will often use $\tau_a^{-/+}$ instead of $\tau_a^{x,-/+}$ and $X$ instead of $X^x$. We call $\tau_0^{-}$ the extinction time of $X$ and $\tau_{\infty}^{+}$ the explosion time of $X$.

\medskip

We 
focus in this article on the setting where the function $I:\mathbb{R}_{+}\mapsto \mathbb{R}$  satisfies
\paragraph{Condition [A]}: \begin{center} $I(0) \leq 0 \text{ and } I \text{ is locally Lipschitz on } (0,\infty)$.\end{center}

Suppose that $\big(\Omega,\mathcal{F},(\mathcal{F}_t)_{t\geq 0},\mathbb{P}\big)$ is a filtered probability space satisfying the usual hypotheses. Recall $\mathcal{M}$ and $B$ the Poisson random measure and the Brownian motion in the stochastic equation \eqref{eds} and assume that they are adapted to $(\mathcal{F}_t)_{t\geq 0}$. 
\begin{proposition}
\label{Prop existence}
Assume that $I$ satisfies [A]. Then, for all positive $\mathcal{F}_0$-measurable random variable $X_0$, there exists a unique càdlàg strong solution to $\eqref{eds}$, up to the random time $\zeta$.
\end{proposition}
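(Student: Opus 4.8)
The plan is to reduce the statement to a standard existence-and-uniqueness result for stochastic equations with jumps (as found, e.g., in Dawson–Li \cite{DawsonLi} or Li \cite{ZLi,JiLi}) by a localization argument that neutralizes the fact that $I$ is only \emph{locally} Lipschitz on $(0,\infty)$ and that the branching-type coefficients $x\mapsto\sqrt{x}$ and $x\mapsto x$ (and the jump kernel restricted to $[0,X_{s-}]$) are not globally Lipschitz either. The point of working only up to the lifetime $\zeta=\tau_0^-\wedge\tau_\infty^+$ is precisely that on the open interval $(0,\infty)$ all the coefficients become well-behaved once one stays in a compact sub-interval.

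First I would fix, for $n\geq 1$, a truncation level and introduce modified coefficients: replace $I$ by a globally Lipschitz function $I^{(n)}$ on $\mathbb R$ that agrees with $I$ on $[1/n,n]$ (possible by Condition [A], extending $I$ to a Lipschitz function with the same local constant on $[1/n,n]$ and continued affinely outside; the value $I(0)\le 0$ is not needed here, only the local Lipschitz property on $(0,\infty)$). Similarly, replace $\sqrt{x}$ by a bounded Lipschitz modification $\sigma_n(x)$ agreeing with $\sqrt x$ on $[1/n,n]$, replace the linear term $\gamma x$ by a Lipschitz modification agreeing with it on $[1/n,n]$, and cap the jump integrals by integrating over $[0,(x\wedge n)_+]$ in the spatial variable $u$. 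For the resulting equation, call it $(\mathrm{E}_n)$, all coefficients are globally Lipschitz in the sense required by the existence theory for SDEs with Poisson jumps on $\mathbb R$ (using the $L^1$-type estimates for the compensated small-jump integral, valid since $\int_0^1 h^2\,\pi(\ddr h)<\infty$, and the pathwise finiteness of the large-jump integral since $\pi((1,\infty])<\infty$); the mass at $\infty$ is handled by interpreting a jump to $\infty$ as sending the process out of $(0,\infty)$, hence occurring at a rate bounded by $\lambda n$ up to the truncation). Thus $(\mathrm{E}_n)$ has a unique strong solution $X^{(n)}$, global in time.

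Next I would define the stopping times $\zeta_n := \inf\{t\ge 0:\ X^{(n)}_t \notin (1/n,n)\}$ and observe that, because $(\mathrm{E}_n)$ and $(\mathrm{E}_{n+1})$ have coefficients coinciding on $[1/n,n]$ and are driven by the same $B$ and $\mathcal M$, pathwise uniqueness (localized) forces $X^{(n)}_{\cdot\wedge\zeta_n}=X^{(n+1)}_{\cdot\wedge\zeta_n}$ almost surely. Hence the processes are consistent and one may set $\zeta:=\lim_{n\to\infty}\zeta_n$ (the sequence $\zeta_n$ is non-decreasing since staying in $(1/(n{+}1),n{+}1)$ is a weaker constraint) and define $X_t$ on $\{t<\zeta\}$ by $X_t:=X^{(n)}_t$ for any $n$ with $t<\zeta_n$. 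On $[0,\zeta_n]$ the process $X$ solves $(\mathrm{E}_n)$, whose coefficients agree with those of \eqref{eds} as long as $X\in[1/n,n]$; therefore $X$ solves \eqref{eds} on $[0,\zeta)$, and $\zeta$ is indeed the first exit time of $X$ from $(0,\infty)$, i.e. the lifetime $\zeta=\tau_0^-\wedge\tau_\infty^+$. For uniqueness, any other solution $\tilde X$ to \eqref{eds} up to its lifetime must, when stopped at its own exit time from $(1/n,n)$, solve $(\mathrm{E}_n)$ as well; localized pathwise uniqueness for $(\mathrm{E}_n)$ then gives $\tilde X = X$ up to $\zeta_n$ for every $n$, hence up to $\zeta$.

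The main obstacle is the verification that each truncated equation $(\mathrm{E}_n)$ genuinely falls within an available strong existence/uniqueness theorem, given the two-parameter Poisson measure $\mathcal M$ on $\mathbb R_+\times\mathbb R_+\times[0,\infty]$ and, in particular, the non-Lipschitz-but-Hölder character of the Feller noise coefficient $\sqrt{x}$ — here one cannot simply invoke a Lipschitz theorem and must instead rely on a Yamada–Watanabe-type argument (pathwise uniqueness for the $1/2$-Hölder diffusion coefficient, combined with the comparison/monotonicity structure of the spatial jump kernel $u\in[0,X_{s-}]$) as developed in \cite{DawsonLi,JiLi}. I would therefore state clearly that $(\mathrm{E}_n)$ is covered by those results once the coefficients are truncated to be bounded, and spend most of the care on (i) checking the moment/integrability hypotheses on the truncated jump kernel using $\int_0^\infty 1\wedge h^2\,\pi(\ddr h)<\infty$, and (ii) the consistency-of-solutions argument that glues the $X^{(n)}$ into a single process with lifetime exactly $\zeta$.
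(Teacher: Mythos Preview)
Your proposal is correct and follows essentially the same route as the paper: localize to compact intervals $[1/n,n]\subset(0,\infty)$, invoke the general existence/uniqueness results of Dawson--Li and related references, and glue the localized solutions up to the exit time $\zeta$ from $(0,\infty)$. The paper's own proof is in fact only a brief sketch pointing to the literature and to the need for truncating large jumps (to bypass the finite-first-moment hypothesis $\int_1^\infty h\,\pi(\ddr h)<\infty$ in the Dawson--Li conditions); your outline is more detailed on the consistency/gluing step and covers the same ground.
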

\begin{proof}
This follows from general results on stochastic differential equations with jumps. We do not go into details here since existence and uniqueness results are available in the literature for more general settings. We refer  for instance to Li and Pu \cite[Theorem 6.1]{li-pu} and \cite[Theorem 2.5, p.~823]{DawsonLi}. It is worth mentioning however that some of the assumptions in these references (in particular conditions (2a) and (2c) in \cite{DawsonLi}) require the jump intensity measure $\pi$ to have a finite first moment, i.e. $ \int^{\infty}_1h\pi(\ddr h)<\infty$. As noticed in Palau and Pardo \cite[Proposition 1, p.~61]{PalauPardo}, \cite[p.~2539]{LYZ}, see also Ji and Li \cite{JiLi}, a way to circumvent this assumption is to proceed by localization and truncation of the large jumps.
All conditions are indeed then verified on compact intervals of the form $[\frac{1}{m}, m]\subset(0,\infty)$. 
The CBDI process thus exists uniquely  up to its first exit time from $(0,\infty)$. 
\end{proof}

The first process we shall work with is the minimal solution of \eqref{eds}, that is to say we consider the process that gets absorbed at the endpoints $0$ and $\infty$ whenever $\zeta=\tau_0^{-}\wedge \tau_{\infty}^{+}<\infty$. More precisely, we set
\begin{equation}
X_t:= \left\{
\begin{array}{ll}
0 & \text{if } t \geq \tau_0^{-}, \quad \text{on } \{\tau_0^{-} < \tau_{\infty}^{+} \}, \\
\infty & \text{if } t \geq \tau_{\infty}^{+}, \quad \text{on } \{\tau_{\infty}^{+} < \tau_0^{-} \}.
\end{array}
\right.
\end{equation}
For any $x\in [0,\infty)$, we denote by $(X^x_t, t\geq 0)$  the process solution to \eqref{eds} with $X_0=x$. 
Its law is denoted by $\mathbb{P}_x$ and the expectation with respect to  $\mathbb{P}_x$ by $\mathbb{E}_x$. 
\smallskip

We will also rely on the Markovian properties of $\mathrm{CBDI}$ processes in our analysis. We call \textit{generator} of $X$ the operator
\begin{align}
\label{generator}
\mathcal{X}f(x):=-I(x)f'(x)&-\gamma xf'(x)+\frac{\sigma^2}{2}xf''(x) \nonumber \\
&+x\int_0^{\infty}\left(f(x+u)-f(x)-uf'(x)\mathbf{1}_{\{u\leq 1\}}\right)\pi(\ddr u), \ \ x\in D_f,
\end{align}
defined on $D_\mathcal{X}:=\left\{f\in C^2: \mathcal{X}f(x) \text{ is well-defined for all } x\in D_f\right\}$. 
\begin{proposition}
\label{lemma generator} 
The process $X$, solution to \eqref{eds}, satisfies the strong Markov property and the following local martingale problem: 
\begin{equation}\label{eq:localmartingaleproblem}
\forall x\in [0,\infty),\  \forall f \in \mathcal{D}_{\mathcal{X}}, \quad \left(f(X_t)-\int_0^t\mathcal{X}f(X_s)\ddr s\right)_{t\geq 0} \text{ is a } \mathbb{P}_x\text{ - local martingale.}
\end{equation}
Furthermore, $C^2_b\subset D_\mathcal{X}$ and for any $f\in C^2_b$ such that $\underset{z>0}{\sup} \ \vert \mathcal{X}f(z) \vert<\infty$, the process in \eqref{eq:localmartingaleproblem} is a martingale. 
\end{proposition}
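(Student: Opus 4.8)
The plan is to derive everything from the stochastic equation \eqref{eds} together with Itô's formula for semimartingales with jumps, using the localization/truncation scheme already invoked in the proof of Proposition~\ref{Prop existence}. First I would treat the strong Markov property: on each compact interval $[\tfrac1m,m]$ the coefficients of \eqref{eds} are Lipschitz (after truncating the large jumps at level $m$), so the truncated process is a strong Markov process by standard theory (e.g. \cite{li-pu,DawsonLi}); since the minimal solution $X$ coincides with the truncated one up to the exit time $\sigma_m:=\tau_{1/m}^{-}\wedge\tau_m^{+}$, and $\sigma_m\uparrow\zeta$, the strong Markov property at a stopping time $T$ is obtained by applying the property to each truncated process and letting $m\to\infty$, using that $\{T<\zeta\}=\bigcup_m\{T<\sigma_m\}$ and that the absorption convention at $0,\infty$ makes the statement trivial on $\{T\ge\zeta\}$.

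Next, for the local martingale problem, fix $f\in D_{\mathcal X}$ and apply the change-of-variables formula to $f(X_t)$ using \eqref{eds}: the continuous martingale part contributes $\sigma\int_0^t\sqrt{X_s}f'(X_s)\,\ddr B_s$, the drift terms contribute $-\int_0^t\big(I(X_s)+\gamma X_s\big)f'(X_s)\,\ddr s$, the diffusion correction contributes $\tfrac{\sigma^2}{2}\int_0^t X_s f''(X_s)\,\ddr s$, and the jump terms (small jumps compensated, large jumps not) combine to give $\int_0^t\int_0^{X_{s-}}\int_{(0,\infty]}\big(f(X_{s-}+h)-f(X_{s-})\big)\tilde M(\ddr s,\ddr u,\ddr h)$ plus the compensator $\int_0^t X_s\int_{(0,\infty]}\big(f(X_s+h)-f(X_s)-hf'(X_s)\mathbf 1_{\{h\le1\}}\big)\pi(\ddr h)\,\ddr s$; collecting the absolutely continuous terms yields exactly $\int_0^t\mathcal Xf(X_s)\,\ddr s$, so $f(X_t)-\int_0^t\mathcal Xf(X_s)\,\ddr s$ equals $f(X_0)$ plus the stochastic integrals against $\ddr B$ and $\tilde M$, which are local martingales. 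To make this rigorous up to $\zeta$ one again stops at $\sigma_m$: on $[0,\sigma_m]$ the process stays in $[\tfrac1m,m]$, all integrands are bounded there, $\mathcal Xf$ is well-defined and bounded on $[\tfrac1m,m]$ by the definition of $D_{\mathcal X}$ and local Lipschitzness of $I$, so the stopped process is a genuine martingale; the localizing sequence $\sigma_m\wedge n$ then exhibits $f(X_t)-\int_0^t\mathcal Xf(X_s)\,\ddr s$ as a local martingale on $[0,\zeta)$, and the absorption convention extends it past $\zeta$ with $\mathcal Xf$ vanishing at the absorbed states after checking $f'$, $f''$ are controlled there (or simply noting the process is constant).

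For the last assertion, let $f\in C_b^2$ with $K:=\sup_{z>0}|\mathcal Xf(z)|<\infty$. From the previous step, $N_t:=f(X_t)-\int_0^t\mathcal Xf(X_s)\,\ddr s$ is a local martingale; I would show it is a true martingale by producing an integrable dominating bound. Since $f$ is bounded, $|f(X_t)|\le\|f\|_\infty$, and $|\int_0^t\mathcal Xf(X_s)\,\ddr s|\le Kt$, so $\sup_{s\le t}|N_s|\le\|f\|_\infty+Kt$ is deterministic hence integrable; a local martingale dominated on each $[0,t]$ by an integrable (here even bounded) random variable is a martingale, which concludes. The one point requiring care — and the main technical obstacle — is justifying the jump part of Itô's formula and the finiteness of the compensator when $\pi$ may have infinite first moment and a possible atom at $\infty$: one splits the integral over $(0,1]$ and $(1,\infty]$, uses $\int 1\wedge h^2\,\pi(\ddr h)<\infty$ together with the $C^2$ (Taylor, for small $h$) resp. boundedness (for large $h$ and for the atom at $\infty$, where $f(\infty):=\lim f$) of $f$ to control each piece, and works up to $\sigma_m$ so that $X_{s-}$ is bounded and the $\ddr u$-integration only contributes a bounded factor. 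Once this decomposition is in place the rest is bookkeeping already carried out in the references cited for Proposition~\ref{Prop existence}.
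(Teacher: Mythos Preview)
Your proposal is correct and, for the core step (the local martingale problem), follows exactly the paper's approach: apply It\^o's formula with jumps to $f(X_t)$ using the stochastic equation \eqref{eds}, and identify the bounded-variation part as $\int_0^t\mathcal Xf(X_s)\,\ddr s$; the paper does precisely this computation in the Appendix, citing \cite[Theorem~93, p.~59]{Situ}.

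Two minor differences worth noting. For the strong Markov property, the paper does not localize: it simply invokes the general principle that pathwise (strong) uniqueness for the SDE yields the strong Markov property, citing \cite[proof of Theorem~1.1, p.~2949]{PLi}. Your localization argument is valid but more laborious than needed. Conversely, the paper's appendix does \emph{not} spell out the last assertion (that $f\in C_b^2$ with $\sup|\mathcal Xf|<\infty$ upgrades the local martingale to a true martingale), whereas you do, via the clean domination $|N_s|\le\|f\|_\infty+Kt$ on $[0,t]$. That argument is correct and fills a gap the paper leaves implicit.
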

The proof of Proposition \ref{lemma generator} is deffered to the Appendix.
\\ 

A fundamental tool we are going to use throughout this article is comparison.  We refer the reader to \cite[Theorem 2.2]{DawsonLi} for those statements\footnote{As in the proof of Proposition \ref{Prop existence}, they follow from \cite[Theorem 2.2]{DawsonLi} by truncation and localization.}.

\begin{proposition}\label{prop comp}\ Assume that $I$ satisfies [A]. Then the following comparison properties hold:
\begin{enumerate}
    \item Let $x,y\in [0,\infty)$, \begin{equation}
\textup{(CP1)} \quad x \leq y \ \Rightarrow \ \mathbb{P}\big( X_t^x\leq X_t^y, \ \forall t\in [0,\zeta(x)\wedge\zeta(y))\big)=1
\end{equation}
\item Let $x\in [0,\infty)$ and  $I^{(1)}$ and $I^{(2)}$ be two functions defined on $[0,\infty)$, satisfying [A] and $X^{(1)}$, $X^{(2)}$ be the CBDI($\Psi$, $I^{(1)}$) and CBDI($\Psi$, $I^{(2)}$) started from a same value $x\in (0,\infty)$. We have:
\begin{equation}
\textup{(CP2)} \quad I^{(1)} \leq I^{(2)} \ \Rightarrow \ \mathbb{P}\big(X^{(2)}_t \leq X^{(1)}_t,\ \forall t\in [0,\zeta^{(1)}\wedge\zeta^{(2)})\big)=1.
\end{equation}
\end{enumerate}
\end{proposition}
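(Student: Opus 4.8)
The plan is to reduce both statements to the comparison theorem for stochastic equations with non-negative jumps of Dawson and Li \cite[Theorem 2.2]{DawsonLi}, after rewriting the jump part of \eqref{eds} so that the relevant monotonicity becomes visible and after a localization/truncation step that brings us within the scope of that reference. First I would realize the processes to be compared on a common probability space, driven by the \emph{same} Gaussian white noise $W$ — equivalently the same Brownian motion $B$ through \eqref{whitenoise} — and the \emph{same} Poisson random measure $M$. Regarding $(u,h)\in\mathbb R_+\times[0,\infty]$ as a mark and setting $g\big(z,(u,h)\big):=h\,\mathbf 1_{\{u\le z\}}$, the two jump integrals in \eqref{eds} become $\int_0^t\!\int g(X_{s-},\cdot)\,\widetilde M$ on $\{h\le1\}$ and $\int_0^t\!\int g(X_{s-},\cdot)\,M$ on $\{h>1\}$. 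The structural fact underlying comparison is that, for every fixed $(u,h)$, the map $z\mapsto z+g(z,(u,h))=z+h\,\mathbf 1_{\{u\le z\}}$ is non-decreasing on $[0,\infty)$ (the identity, raised by $h$ on $\{z\ge u\}$); this is exactly the monotonicity hypothesis on the jump coefficient demanded by the comparison theory for this class of equations, and it is why ordering propagates pathwise when both solutions use the same $W$ and $M$.

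Since \cite[Theorem 2.2]{DawsonLi} is stated under global Lipschitz assumptions and, as recalled in the proof of Proposition \ref{Prop existence}, implicitly under a finite first moment for $\pi$, I would localize as there. For (CP1): fix $m\in\mathbb N$, set $\sigma_m:=\tau_{1/m}^{x,-}\wedge\tau_m^{x,+}\wedge\tau_{1/m}^{y,-}\wedge\tau_m^{y,+}$, replace the Lévy measure by $\pi|_{(0,m]}$, and modify $z\mapsto-\gamma z-I(z)$ outside $[1/m,m]$ so as to be globally Lipschitz (possible by [A]). None of these alterations affects $X^x$ or $X^y$ on $[0,\sigma_m)$: there both stay in $[1/m,m]$, so the drift is unchanged, and a jump of size $>m$ would instantly push the process above level $m$, hence occur only at a time $\ge\sigma_m$, so truncating the big jumps is invisible before $\sigma_m$. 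The modified, truncated equation has globally Lipschitz drift, diffusion coefficient $z\mapsto\sigma\sqrt z$, and finite large-jump intensity $\le m\,\overline\pi(1)<\infty$, so \cite[Theorem 2.2]{DawsonLi} applies and orders its solutions started from $x$ and from $y$ respectively when $x\le y$; restricting to $[0,\sigma_m)$ gives $X^x_t\le X^y_t$ for $t<\sigma_m$, and since $\sigma_m\uparrow\zeta(x)\wedge\zeta(y)$ a.s.\ by monotonicity of the passage times, (CP1) follows. For (CP2) the argument is identical, the two solutions now starting from the same $x$ and driven by the same noise, with drift coefficients $b^{(i)}(z):=-\gamma z-I^{(i)}(z)$ satisfying $b^{(2)}\le b^{(1)}$ pointwise because $I^{(1)}\le I^{(2)}$ (this order can be preserved by the Lipschitz extensions); the comparison theorem in its ``ordered drift, equal initial condition'' form then yields $X^{(2)}_t\le X^{(1)}_t$ on $[0,\sigma_m)$, and one lets $m\to\infty$.

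I expect the main difficulty to be organizational rather than conceptual: one must perform the localization and the big-jump truncation simultaneously for the two coupled processes, so that on the common region $[0,\sigma_m)$ they genuinely coincide with the original solutions while also falling within the hypotheses of \cite[Theorem 2.2]{DawsonLi}, and one must verify $\sigma_m\uparrow\zeta(x)\wedge\zeta(y)$. Should a self-contained proof be wanted, the technical heart is an Itô--Tanaka estimate for $\mathbb E\big[\phi_k\big((X^y-X^x)_{t\wedge\sigma_m}\big)\big]$ with $\phi_k$ a Yamada--Watanabe smoothing of $z\mapsto z^-$: the continuous martingale part is handled via $(\sqrt a-\sqrt b)^2\le|a-b|$ (equivalently, the quadratic variation of the difference of the two white-noise integrals equals $\int_0^\cdot|X^y_s-X^x_s|\,\ddr s$), the drift by the local Lipschitz property of $I$ and Gronwall's lemma, and the jump terms by the observation that, whenever $X^x_{s-}\le X^y_{s-}$, the jump of $X^y-X^x$ equals $h\big(\mathbf 1_{\{u\le X^y_{s-}\}}-\mathbf 1_{\{u\le X^x_{s-}\}}\big)\ge0$, so jumps can never turn the difference negative — which is the probabilistic content of the monotonicity of $z\mapsto z+g(z,\cdot)$ noted above.
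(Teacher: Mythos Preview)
Your proposal is correct and follows exactly the approach indicated in the paper: the paper does not give a detailed proof of Proposition~\ref{prop comp} but simply refers to \cite[Theorem 2.2]{DawsonLi} and notes, in a footnote, that the result follows by truncation and localization as in the proof of Proposition~\ref{Prop existence}. Your write-up supplies precisely these details (common driving noise, monotone jump kernel $z\mapsto z+h\mathbf 1_{\{u\le z\}}$, localization via $\sigma_m$ and truncation of jumps above level $m$, then $\sigma_m\uparrow\zeta(x)\wedge\zeta(y)$), so it is in fact more explicit than the paper itself.
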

\smallskip

By the comparison property (CP1), one can define  the process $X$ starting from infinity, denoted by  $X^{\infty}$, as the following almost sure pointwise increasing limit:
\begin{equation}
\forall t \geq 0, \quad X^{\infty}_t := \lim_{x \to \infty} \uparrow \, X^x_t \quad \textup{a.s.}
\end{equation}
Note that $X^{\infty}_0=\infty$ a.s. and that $X^{\infty}_t$ may well be infinite for $t>0$.  
\vspace{2mm}
\\We denote by $\mathbb{P}_{\infty}$, the law of the process $\left(X^\infty_t\right)_{t \geq 0}$ and we say that the process $X$ \emph{comes down from infinity}, or equivalently that $\infty$ is an instantaneous entrance point for $X$, if
\[
\forall t > 0, \quad X^{\infty}_t < \infty \quad \textup{a.s.}
\]

We assume from now on that, when the process has jumps, i.e. $\pi\not{\equiv}0$, one has $\overline{\pi}(1)>0$, i.e. we are not restricted to a case with only small jumps. As we are interested here in processes with no explosion, we also focus on the case with no jump to $\infty$, i.e. $\lambda=\pi(\{\infty\})=0$. In particular, this entails that $\underset{x\rightarrow \infty}{\lim} \downarrow \bar{\pi}(x)=0$.



\section{Main results}
\label{section:main}


For the remainder of this work, we take $I:\mathbb{R}_{+}\mapsto \mathbb{R}$ satisfying [A] and consider $X$ the $\mathrm{CBDI}(\Psi,I)$, i.e. the unique minimal solution to \eqref{eds}. 
Our first result establishes sufficient conditions for non-explosion and for coming down from infinity. To this end, we introduce an additional set of technical conditions on $I$, denoted by [B]. These conditions are specifically tailored for the construction of Lyapunov functions and for the application of Theorem \ref{th cond lyapunov}.


\paragraph{Condition [B]} \textit{There exists a constant $\kappa \geq 0$ such that:
\begin{itemize}
\item[(B1)] $I$ is $C^1$ and positive on $[\kappa,\infty)$, $z\mapsto \frac{I(z)}{z}$ is nondecreasing on $[\kappa,\infty)$, $\underset{z\rightarrow \infty}{\lim} \frac{I(z)}{z}=\infty$ and $\int_{\kappa}^{\infty}\frac{u}{I(u)}\ddr u=\infty$.
\item[(B2)] 
The map $z\mapsto \frac{I'(z)}{z}$ is bounded on $[\kappa,\infty)$.
\item[(B3)] There exists a constant $b>0$ such that \[\forall (y,z)\in [0,\infty)^2, \ I(y+z)-I(y)\geq -bz.\] We call this condition the one-sided Lipschitz condition.
\end{itemize}}

\begin{theorem}\label{th descente inf} Let $X$ be a $\mathrm{CBDI}(\Psi,I)$.  

\begin{itemize}
\item[(i)] If $I$ satisfies (B1) and $\mathcal{I}:=\int_{\kappa}^{\infty}\frac{u\overline{\pi}(u)}{I(u)}\ddr u<\infty$, then $X$ almost surely does not explode, namely $$\forall x\in (0,\infty), \quad \mathbb{P}_x(\tau_{\infty}^{+}<\infty)=0,$$ and its first passage times below a certain level are integrable, i.e. $$\exists x_0\in (0,\infty) ; \ \forall x>x_0, \ \mathbb{E}_x[\tau_{x_0}^{-}]<\infty.$$
\item[(ii)] If $I$ satisfies (B1), (B2) and $\mathcal{J}:=\int_{\kappa}^{\infty}\frac{1+u\overline{\pi}(u)}{I(u)}\ddr u<\infty$, then $X$ does not explode and comes down from infinity. Furthermore,
$$\exists x_0\in (0,\infty); \ \mathbb{E}_{\infty}[\tau_{x_0}^{-}]<\infty.$$
\end{itemize}
\end{theorem}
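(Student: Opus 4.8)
The plan is to use the method of Lyapunov functions, i.e. Theorem \ref{th cond lyapunov}, applied to the generator $\mathcal{X}$ of \eqref{generator}. The key object will be an increasing function $V$ on $[0,\infty)$ with $V(\infty)=\infty$ (or $V$ bounded, depending on which conclusion we want) such that $\mathcal{X}V(x) \leq C V(x) + C'$ for non-explosion, and such that $\mathcal{X}V(x) \leq -c < 0$ for $x$ large for coming down from infinity with an integrable hitting time. The natural candidate, following the classical recipe for branching processes, is built from the function $z\mapsto \int_\kappa^z \frac{u}{I(u)}\,\ddr u$ or a truncated/rescaled variant: the point of condition (B1), with $\int_\kappa^\infty u/I(u)\,\ddr u=\infty$, is precisely that this primitive tends to $\infty$, so it is a legitimate ``norm-like'' Lyapunov function reaching the boundary $\infty$.

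For part (i), I would first reduce the explosion question to controlling $\mathcal{X}V$ on $(\kappa,\infty)$. Writing $V'(x)=x/I(x)$ (up to constants), the drift part $-I(x)V'(x)=-x$ contributes a clean negative term; the linear drift $-\gamma x V'(x)$ and the diffusion term $\frac{\sigma^2}{2}xV''(x)$ are lower-order since $V'(x)=x/I(x)\to 0$ and $V''$ is controlled once $z\mapsto I(z)/z$ is nondecreasing; the delicate term is the jump integral $x\int_0^\infty (V(x+u)-V(x)-uV'(x)\mathbf 1_{u\le1})\pi(\ddr u)$. Splitting it at $u=1$: the small-jump part is $O(xV''(x)\int_0^1 u^2\pi(\ddr u))$ hence controlled; the large-jump part $x\int_1^\infty (V(x+u)-V(x))\pi(\ddr u)$ is where the finiteness of $\mathcal I=\int_\kappa^\infty u\overline\pi(u)/I(u)\,\ddr u$ enters. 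Using $V(x+u)-V(x)=\int_x^{x+u} v/I(v)\,\ddr v$ together with monotonicity of $I(v)/v$ and a Fubini/integration-by-parts argument converting $\int_1^\infty$ against $\pi$ into an integral against $\overline\pi$, one bounds this term by something like $Cx \cdot (\text{function integrable against } \overline\pi)$, ultimately by $C V(x)+C'$. Then non-explosion follows from the standard Lyapunov criterion (the local martingale $V(X_{t\wedge\tau})-\int_0^{t\wedge\tau}\mathcal XV(X_s)\,\ddr s$ combined with Gronwall prevents $\tau_\infty^+<\infty$), and integrability of $\tau_{x_0}^-$ from $x>x_0$ follows because on $[x_0,\infty)$ the term $-x$ dominates, giving $\mathcal XV(x)\le -c<0$ there, whence $\mathbb E_x[\tau_{x_0}^-]\le V(x)/c<\infty$ by optional stopping.

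For part (ii), the extra hypotheses (B2) and the strengthening $\mathcal J=\int_\kappa^\infty (1+u\overline\pi(u))/I(u)\,\ddr u<\infty$ are what upgrade ``integrable hitting time from each finite $x$'' to ``integrable hitting time from $\infty$''. Here I would take a \emph{bounded} Lyapunov function, e.g. $W(x):=\int_\kappa^{x\vee\kappa} \frac{1}{I(u)}\,\ddr u$, which is finite and bounded at $\infty$ precisely because $\int_\kappa^\infty \ddr u/I(u)<\infty$ (a consequence of $I(z)/z\to\infty$). One checks $-I(x)W'(x)=-1$, while condition (B2) — boundedness of $I'(z)/z$ — is exactly what keeps the diffusion and small-jump contributions $\frac{\sigma^2}{2}xW''(x)+\dots$ bounded (note $W''(x)=-I'(x)/I(x)^2$, and $xW''$ is controlled via (B1)+(B2)), and the $\overline\pi$-part of $\mathcal J$ controls the large-jump integral just as in (i). One obtains $\mathcal XW(x)\le -c<0$ for $x\ge x_0$, with $W$ bounded, say $W\le \Vert W\Vert_\infty$. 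Applying Theorem \ref{th cond lyapunov} to each $X^x$ and passing to the limit $x\to\infty$ using (CP1) — the monotone limit $X^\infty$ and Fatou/monotone convergence — yields $\mathbb E_\infty[\tau_{x_0}^-]\le \Vert W\Vert_\infty/c<\infty$; in particular $\tau_{x_0}^-<\infty$ a.s., and combined with the strong Markov property and non-explosion this gives $X^\infty_t<\infty$ a.s. for all $t>0$, i.e. coming down from infinity.

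The main obstacle I anticipate is the careful estimate of the large-jump term $x\int_1^\infty (V(x+u)-V(x))\pi(\ddr u)$ without a first-moment assumption on $\pi$: one must exploit the concavity-type inequality coming from the monotonicity of $z\mapsto I(z)/z$ to write $V(x+u)-V(x)\le$ (something like) $\frac{u(x+u)}{I(x+u)}$ or $\le \frac{x}{I(x)}u + (\text{correction})$, and then trade the awkward $\int_1^\infty$ against $\pi$ for an $\int$ against the tail $\overline\pi$ via Fubini/integration by parts, so that finiteness of $\mathcal I$ (resp. $\mathcal J$) delivers exactly a bound of the form $C V(x)+C'$ (resp. a uniform negative bound for large $x$). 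Getting the bookkeeping of these constants right, and verifying that the resulting $V$, $W$ genuinely lie in $D_\mathcal X$ so that the local martingale problem of Proposition \ref{lemma generator} applies, is the technical heart of the argument.
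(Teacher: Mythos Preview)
Your strategy coincides with the paper's: the two Lyapunov functions you propose are exactly the paper's $f_1(z)=\int_\kappa^z \frac{u}{I(u)}\,\ddr u$ and $f_2(z)=\int_\kappa^z \frac{\ddr u}{I(u)}$ (Lemmas \ref{lemma lyap 1} and \ref{lemma lyap 2}), the large-jump term is controlled by the same Fubini swap to an integral against $\overline\pi$ (so that $\mathcal I<\infty$ gives $\epsilon_i(z)\to 0$), and the conclusions follow from Theorem \ref{th cond lyapunov} just as you outline. Two small corrections: first, the boundedness of $W=f_2$ is \emph{not} a consequence of $I(z)/z\to\infty$ (e.g.\ $I(u)=u\log u$) but is part of the hypothesis $\mathcal J<\infty$; second, in part (i) you do not have (B2), so you should not bound the small-jump and diffusion terms via $xV''(x)$---instead note that (B1) makes $z\mapsto z/I(z)$ nonincreasing, hence $f_1$ is concave on $[\kappa,\infty)$, so both the diffusion term $\tfrac{\sigma^2}{2}xf_1''(x)$ and the compensated small-jump integral are nonpositive and can simply be dropped for the upper bound on $\mathcal X f_1$.
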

The proof of Theorem \ref{th descente inf} can be found in Section \ref{section:lyapfun}.\\

The integral condition $\mathcal{J}<\infty$, stated in Theorem~\ref{th descente inf}~(ii), is equivalent to the finiteness of the following two integrals:
\[
\int_{\kappa}^{\infty}\frac{\ddr u}{I(u)} < \infty \quad \text{and} \quad \mathcal{I}=\int_{\kappa}^{\infty}\frac{u\,\bar{\pi}(u)}{I(u)} \, \ddr u < \infty.
\]
The first integrability condition ensures that the deterministic flow, started from $x_0=\infty$,
\[\frac{\ddr x_t}{\ddr t}=-I(x_t), \ x_0=\infty, \ t\in [0,\infty),\] comes down from infinity, namely $x_t<\infty$ for all $t>0$. Indeed, by rewriting the differential equation in its integral form, $(x_t)$ such that $x_0=\infty$ satisfies for all $t>0$, $$\int_{x_t}^{\infty}\frac{\ddr u}{I(u)}=t\in (0,\infty).$$ When the integral on the left hand side is finite, we see that  for all $t>0$, necessarily $x_t<\infty$.  
\vspace{1mm}

The second integrability condition $\mathcal{I}<\infty$
captures a form of stability for the CBDI process when started from a finite value. 
\newpage
By applying Theorem \ref{th descente inf}, we have the following.
\begin{example}
Let $\pi$ and $I$ be satisfying: 
\begin{align*}
&\bar{\pi}(z) \underset{z\rightarrow \infty}{\sim} c_B z^{-\alpha}(\log z)^{\beta} \text{ and } \ I(z) \underset{z\rightarrow \infty}{\sim} c_I z^{\hat{\alpha}}\left(\log z\right)^{\hat{\beta}},\end{align*} 
with $\alpha \in (0,2], \  \beta, \hat{\alpha}, \hat{\beta} \in (-\infty,\infty)$, $c_B,c_I>0$.  

\begin{table}[h!]
\centering
\renewcommand{\arraystretch}{1.3}
\begin{tabular}{|p{0.47\textwidth}|p{0.47\textwidth}|}
\hline
\centering \textbf{Non-explosion} \\$(B1),\ \mathcal{I}<\infty$ & 
\centering \textbf{Coming down from infinity} $(B1),(B2),\ \mathcal{J}<\infty$ \tabularnewline
\hline
(a1) $\alpha+\hat{\alpha}>2$, $\hat{\alpha}\in(1,2)$
& (b1) $\alpha+\hat{\alpha}>2$, $\hat{\alpha}\in(1,2)$ \\[6pt]
\hline
(a2) $\alpha>1$, $\hat{\alpha}=1$, $\hat{\beta}>0$
& (b2) $\alpha>1$, $\hat{\alpha}=1$, $\hat{\beta}>1$ \\[6pt]
\hline
(a3) $\hat{\alpha}=2$
& (b3) $\hat{\alpha}=2$ \\[6pt]
\hline
(a4) $\alpha+\hat{\alpha}=2$, $\hat{\alpha}\in(1,2)$, $\hat{\beta}-\beta>1$
& (b4) $\alpha+\hat{\alpha}=2$, $\hat{\alpha}\in(1,2)$, $\hat{\beta}-\beta>1$ \\[6pt]
\hline
(a5) $\alpha=1$, $\hat{\alpha}=1$, $\hat{\beta}-\beta>1$, $\hat{\beta}>0$
& (b5) $\alpha=1$, $\hat{\alpha}=1$, $\hat{\beta}-\beta>1$, $\hat{\beta}>1$ \\[6pt]
\hline
\end{tabular}
\end{table}

\end{example}

Note that conditions (a3) and (b3) follow directly from the combination of Theorem \ref{th descente inf} and the comparison property (CP2) stated in Proposition \ref{prop comp}; all other conditions follow from the theorem alone.
This completes the classification established in \cite[page 2536]{LYZ} for polynomial branching processes. In particular, the setting $\alpha \in (0,1)$, for which the classical CB process explodes, see e.g. \cite[Chapter 12]{Kyprianou}, is also covered here in cases (a1)-(a3)-(a4), (b1)-(b3)-(b4).

The next proposition provides equivalent formulations for the integral condition in Theorem~\ref{th descente inf}~(ii) to hold. In particular, the condition $\mathcal{I}<\infty$ can be rewritten as a certain moment condition on the Lévy measure $\pi$.

\begin{proposition}\label{prop:equivond}
Recall $\mathcal{I}$ and $\mathcal{J}$.
\begin{enumerate}
    \item  One has 
\[ \mathcal{I}<\infty \text{ iff }\int_{\kappa}^{\infty} G(h) \, \pi(\ddr h) < \infty,
\text{ with } G(h) := \int_{\kappa}^h \frac{u}{I(u)} \, \ddr u\geq 0,\ h\in [\kappa,\infty).\]

\item Assume that $\Psi$ is regularly varying at $0$ with index $\alpha\in [0,1)$, namely $\Psi(cx)/\Psi(x) \underset{x\rightarrow 0}{\rightarrow} c^{\alpha}$, then 
$$\mathcal{J}<\infty \text{ iff } \int_{\kappa}^{\infty}\frac{1+u|\Psi(1/u)|}{I(u)}\ddr u<\infty.$$
\end{enumerate}
  Assumption (B1) ensures that there exists $C\in (0,\infty)$, such that $G(h)\leq Ch$ for all $h\in [ \kappa,\infty)$. As a direct consequence, we find that any $\mathrm{CBDI}(\Psi,I)$ such that
\[-\Psi'(0+)=\int^\infty_{1}h\pi(\ddr h)<\infty \text{ and } \int_{\kappa}^\infty \frac{\ddr u}{I(u)}<\infty\]
 comes down from infinity. This recovers a result obtained in \cite{LePardoux}. \\ In the logistic case, that is when $I(x) := \frac{c}{2}x^2$ for $x\in [0,\infty)$, with $c>0$, we have $G(h)\underset{z\rightarrow \infty}{\sim} \log h$ and the moment condition becomes 
\[
\int_{\kappa}^{\infty} \log(h) \, \pi(\ddr h) < \infty,
\]
which is known to ensure coming down from infinity of the logistic CB process, see \cite[Corollary 3.10]{Lambert}.

\end{proposition}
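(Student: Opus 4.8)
The plan is to prove the two equivalences by Fubini-type manipulations and then deduce the two corollaries (the Le--Pardoux recovery and the logistic case) as direct consequences of part 1.

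\textbf{Part 1.} The key identity is a Fubini exchange of the order of integration. Writing $\overline{\pi}(u) = \pi([u,\infty])$ and $\mathbf{1}_{\{u\leq h\}}$, I would compute
\[
\int_{\kappa}^{\infty} \frac{u\,\overline{\pi}(u)}{I(u)}\,\ddr u
= \int_{\kappa}^{\infty} \frac{u}{I(u)} \int_{[u,\infty]} \pi(\ddr h)\,\ddr u
= \int_{[\kappa,\infty]} \left( \int_{\kappa}^{h} \frac{u}{I(u)}\,\ddr u \right) \pi(\ddr h)
= \int_{\kappa}^{\infty} G(h)\,\pi(\ddr h),
\]
where the middle step is justified because the integrand $(u,h)\mapsto \frac{u}{I(u)}\mathbf{1}_{\{\kappa\le u\le h\}}$ is nonnegative and measurable, so Tonelli's theorem applies with no integrability hypothesis needed. (Since $\lambda = \pi(\{\infty\}) = 0$ by assumption, the mass at $\infty$ contributes nothing; and $G(h)\ge 0$ because $I>0$ on $[\kappa,\infty)$ by (B1).) This immediately gives $\mathcal{I}<\infty \Leftrightarrow \int_\kappa^\infty G(h)\,\pi(\ddr h)<\infty$.

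\textbf{Part 2.} Here the goal is to replace $u\,\overline{\pi}(u)$ by $u\,|\Psi(1/u)|$ up to multiplicative constants for large $u$, under the assumption that $\Psi$ is regularly varying at $0$ with index $\alpha\in[0,1)$. The standard comparison between the tail of a Lévy measure and its Laplace exponent is that $\Psi(x)\asymp \sigma^2 x^2 + x\int_0^{1/x} h\,\overline{\pi}(h)\,\ddr h + \overline{\pi}(1/x)\cdot(\text{lower order})$, or more usably, de Haan--Karamata type estimates give $|\Psi(1/u)| \asymp u^{-1}\int_0^{u} \overline{\pi}(s)\,\ddr s$ type relations; the cleanest route is to use that for $\Psi$ regularly varying at $0$ with index $\alpha\in(0,1)$ (the case $\alpha=1$ with $\sigma=0$, or with a slowly varying correction, handled separately, and $\alpha=0$ similarly), one has $\overline{\pi}(x) \sim c_\alpha\, x\,\Psi(1/x)$ as $x\to\infty$ by the Tauberian theorem of de Haan (see e.g. Bertoin's book). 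Feeding this asymptotic equivalence into the integrand $\frac{1+u\overline{\pi}(u)}{I(u)}$ and using that $I>0$ on a neighbourhood of $\infty$ so the integrand is eventually of one sign, the finiteness of $\mathcal{J}$ is unchanged when $u\overline{\pi}(u)$ is replaced by $u\cdot |\Psi(1/u)|/(c_\alpha^{-1})$, hence $\mathcal{J}<\infty \Leftrightarrow \int_\kappa^\infty \frac{1+u|\Psi(1/u)|}{I(u)}\,\ddr u<\infty$. I would be careful to state the exact form of the Tauberian input and to treat the degenerate indices; this is the step I expect to be the main obstacle, since the $\alpha\in\{0,1\}$ boundary cases and the possible presence of a Gaussian part need a short separate argument (when $\alpha<1$ and $\sigma>0$ one would not have regular variation of index $<1$, so effectively $\sigma=0$ is forced, which should be remarked).

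\textbf{The two corollaries.} For the bound $G(h)\le Ch$: by (B1), $z\mapsto I(z)/z$ is nondecreasing on $[\kappa,\infty)$, so for $u\ge\kappa$, $I(u)/u \ge I(\kappa)/\kappa =: 1/C > 0$, whence $u/I(u)\le C$ and $G(h) = \int_\kappa^h u/I(u)\,\ddr u \le C(h-\kappa)\le Ch$. Consequently $\int_\kappa^\infty G(h)\,\pi(\ddr h) \le C\int_\kappa^\infty h\,\pi(\ddr h) \le C\int_1^\infty h\,\pi(\ddr h) = -C\Psi'(0+)<\infty$ whenever the first moment of $\pi$ is finite; combined with $\int_\kappa^\infty \ddr u/I(u)<\infty$ this gives $\mathcal{J}<\infty$, so Theorem \ref{th descente inf}(ii) applies and the process comes down from infinity, recovering the result of \cite{LePardoux}. (One should also check (B2) is not needed for this conclusion beyond what Theorem \ref{th descente inf}(ii) already assumes, or note that in the Le--Pardoux setting the relevant drifts satisfy it; I would simply invoke Theorem \ref{th descente inf}(ii) as stated.) For the logistic case $I(x)=\frac{c}{2}x^2$: then $u/I(u) = \frac{2}{c u}$, so $G(h) = \frac{2}{c}\int_\kappa^h \frac{\ddr u}{u} = \frac{2}{c}\log(h/\kappa) \sim \frac{2}{c}\log h$ as $h\to\infty$, and by part 1, $\mathcal{I}<\infty \Leftrightarrow \int_\kappa^\infty \log h\,\pi(\ddr h)<\infty$; since $\int_\kappa^\infty \ddr u/I(u) = \int_\kappa^\infty \frac{2}{cu^2}\,\ddr u<\infty$ automatically, $\mathcal{J}<\infty$ is equivalent to the log-moment condition, which is Lambert's criterion \cite[Corollary 3.10]{Lambert}.
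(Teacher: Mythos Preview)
Your approach matches the paper's exactly: Fubini--Tonelli for part~1, and a Tauberian equivalence between $\bar\pi(u)$ and $\Psi(1/u)$ for part~2. The paper simply cites the precise asymptotic $\bar\pi(u)\sim -\tfrac{1}{\Gamma(1-\alpha)}\Psi(1/u)$ as $u\to\infty$ (valid for all $\alpha\in[0,1)$, referencing \cite{LFZ24}), which is what you need.

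Two small corrections: (i) your stated relation $\overline{\pi}(x)\sim c_\alpha\,x\,\Psi(1/x)$ has a spurious factor of $x$; the correct comparison is $\overline{\pi}(u)\asymp |\Psi(1/u)|$, which is in fact how you use it two lines later, so this is just a slip. (ii) Your worries are misplaced: $\alpha=1$ is excluded by hypothesis, and $\sigma>0$ does \emph{not} obstruct regular variation of $\Psi$ at $0$ with index $<1$, since the Gaussian term $\tfrac{\sigma^2}{2}z^2$ is negligible near $0$ compared to the jump contribution when the latter has infinite mean.
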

\begin{proof}
For 1. By Fubini-Tonelli's theorem, one has :
\[\int_{\kappa}^{\infty}\frac{u\,\bar{\pi}(u)}{I(u)}\ddr u =\int_{\kappa}^{\infty} \int_{\kappa}^h \frac{u}{I(u)} \, \ddr u \, \pi(\ddr h) < \infty.
\] 
For 2. For $\alpha\in [0,1)$. The regular variation property yields the equivalence $$\bar\pi(u)\underset{u\rightarrow \infty}{\sim} -\frac{1}{\Gamma(1-\alpha)}\Psi(1/u).$$
We refer e.g. to Li et al. \cite[Lemma 2.3]{LFZ24} and the references therein.
\end{proof}

\paragraph{}
The next result establishes a form of regularity of the CBDI process with respect to its initial value, namely local uniform convergence. For this purpose, we assume the additional condition (B3) on the drift function \(I\). A related result can be found in \cite[Proposition~1.6, p.~5]{LWZqsd}, where the drift function is assumed to be non-decreasing. By contrast, our approach only relies on the non-explosiveness of the CBDI process, and therefore does not require any monotonicity assumption.

\begin{proposition}\label{prop cv unif}
    Assume that $I$ satisfies (B3) and that $X$ is non-explosive. Then for all $y \in[0,\infty)$, all $t\geq 0$ and all $(y_n)_{n\in \mathbb{N}}$ sequence such that $y_n\rightarrow y$ when $n \rightarrow\infty$, one has:
    \begin{equation*}
        \left(X_s^{y_n}, \ 0\leq s\leq t\right)\underset{n\rightarrow\infty}{\longrightarrow} \left(X_s^{y}, \ 0\leq s\leq t\right) \quad \mathbb{P}\textup{-a.s. in } (D,\rho_{\infty}).
    \end{equation*}
    Furthermore, when $I(0)=0$, for all sequence $(x_n)$ decreasing to $0$, one has $$X_t^{0+}:=\underset{n\rightarrow\infty}{\lim} X_t^{x_n}=X_t^0=0,\ \forall t\geq 0 \text{ a.s.}.$$
\end{proposition}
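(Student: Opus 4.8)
The plan is to prove the locally uniform almost sure convergence by a sandwiching argument built on the comparison property (CP1) together with the one-sided Lipschitz bound (B3), which lets us control the process from one side by an auxiliary CBDI whose drift is \emph{linear}. First I would fix $y\in[0,\infty)$, $t\geq 0$ and a sequence $y_n\to y$. By splitting into the subsequences $\{y_n\geq y\}$ and $\{y_n< y\}$ it suffices to treat a monotone approximating sequence, say $y_n\downarrow y$ (the case $y_n\uparrow y$ is symmetric, and the general case follows by interlacing). For $y_n\downarrow y$, (CP1) gives $X_s^{y}\leq X_s^{y_n}\leq X_s^{y_1}$ for all $s$ up to the relevant lifetimes, and since $X$ is non-explosive all these processes stay finite on $[0,t]$ a.s.; moreover $x\mapsto X_s^x$ is nondecreasing, so the pointwise limit $\bar X_s:=\lim_n\downarrow X_s^{y_n}$ exists and satisfies $\bar X_s\geq X_s^y$. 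The crux is to show $\bar X_s = X_s^y$ for all $s\leq t$, a.s., and then to upgrade pointwise convergence to uniform convergence in $(D,\rho_\infty)$.

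For the identification $\bar X = X^y$, the natural route is to estimate the difference $X_s^{y_n}-X_s^{y}\geq 0$ directly from the stochastic equation \eqref{eds}, using the Dawson–Li white-noise formulation \eqref{whitenoise} so that all the processes are driven by the \emph{same} noise $W$, $M$ and can be coupled on one probability space. Writing $D_s^n := X_s^{y_n}-X_s^{y}$, the Brownian and the Poissonian reproduction terms contribute, after taking expectations (or working with a localized/stopped version to ensure integrability, stopping at $\tau_N^{y,+}$ for large $N$ and letting $N\to\infty$ by non-explosion), increments bounded by $\mathbb{E}[D_{s}^n]$ times a constant — this is where the linear spatial structure $x\mapsto \int_0^{x}(\cdots)$ of the branching terms is used. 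The drift term $-\int_0^s (I(X_u^{y_n})-I(X_u^{y}))\,\ddr u$ is handled by (B3): since $X_u^{y_n}=X_u^{y}+D_u^n$ with $D_u^n\geq 0$, we get $I(X_u^{y_n})-I(X_u^{y})\geq -bD_u^n$, hence $-(I(X_u^{y_n})-I(X_u^{y}))\leq bD_u^n$. Combining, $\mathbb{E}[D_s^n\wedge N'] \leq \mathbb{E}[D_0^n] + C\int_0^s \mathbb{E}[D_u^n\wedge N']\,\ddr u$ for a suitable truncation, and Grönwall's lemma yields $\mathbb{E}[D_s^n\wedge N']\leq (y_n-y)e^{Cs}$; sending $n\to\infty$ and then removing the truncation via monotone convergence and non-explosiveness gives $D_s^n\to 0$ in $L^1$, hence $\bar X_s=X_s^y$ a.s. for each fixed $s$, and then simultaneously for all $s\leq t$ by monotonicity in $x$ and right-continuity.

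To pass from pointwise a.s. convergence to uniform convergence on $[0,t]$ in the metric $d$, I would invoke a Dini-type argument: for each fixed $s$, $s\mapsto X_s^{y_n}$ decreases to $s\mapsto X_s^y$, all paths are càdlàg, and the limit path $s\mapsto X_s^y$ is càdlàg; since $d(x,y)=|e^{-x}-e^{-y}|$ makes $[0,\infty]$ compact and $e^{-X_s^{y_n}}\uparrow e^{-X_s^y}$ monotonically, one can run Dini's theorem on the compactified Skorokhod space — more carefully, decompose $[0,t]$ around the finitely many jump times of $X^y$ exceeding a threshold and use uniform continuity of $x\mapsto e^{-x}$ plus the monotone convergence on the (countably many, but effectively finitely many up to $\varepsilon$) relevant points; the jumps of $X^{y_n}$ and $X^y$ come from the same atoms of $M$, so the jump locations match. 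This delivers $\rho_\infty$-convergence on $[0,t]$. Finally, for the statement when $I(0)=0$: by [A], $I(0)=0$, and $x\equiv 0$ solves \eqref{eds} with $X_0=0$ (all terms vanish), so $X^0\equiv 0$ by uniqueness (Proposition~\ref{Prop existence}); for $x_n\downarrow 0$, (CP1) gives $0\leq X_t^{x_n}$ and monotonicity gives a limit $X_t^{0+}\geq 0$, while the $L^1$ Grönwall estimate above with $y=0$ gives $\mathbb{E}[X_t^{x_n}]\leq x_n e^{Ct}\to 0$, forcing $X_t^{0+}=0=X_t^0$ a.s. for all $t\geq 0$.

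The main obstacle I anticipate is the integrability/localization bookkeeping needed to justify taking expectations and applying Grönwall when the Lévy measure $\pi$ is only assumed to satisfy $\int 1\wedge h^2\,\pi(\ddr h)<\infty$ with possibly infinite first moment: the large-jump term $\int_0^{\cdot}\int_0^{X_{s-}}\int_{(1,\infty]}h\,M$ need not be integrable, so the estimate must be carried out on stopped processes $X^{y}_{\cdot\wedge\tau_N^{y,+}}$ (and with a truncation of $D^n$), and one must then carefully let $N\to\infty$ invoking precisely the non-explosiveness hypothesis $\tau_\infty^{y,+}=\infty$ a.s. to recover the unstopped conclusion. A secondary technical point is making the Dini-type upgrade rigorous on the Skorokhod space — ensuring that matching jump atoms and the compact metric $d$ really do convert monotone pointwise convergence into uniform convergence — but this is routine given that all processes are coupled through the same driving noise.
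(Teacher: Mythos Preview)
Your overall strategy --- couple all initial values via the white-noise formulation, control the difference $D^n_s=X_s^{y_n}-X_s^y\geq 0$ using (B3), and close with a Gr\"onwall-type bound --- is exactly the backbone of the paper's argument. But the two technical steps you flag as ``obstacles'' are in fact genuine gaps that your proposal does not close.

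\textbf{First gap: the $L^1$ Gr\"onwall with truncation $D^n\wedge N'$ does not handle an infinite-mean L\'evy measure.} After compensation, the large-jump term contributes $\int_0^s D^n_{u-}\big(\int_{(1,\infty)}h\,\pi(\ddr h)\big)\,\ddr u$ to the drift of $D^n$. If $\int_{(1,\infty)}h\,\pi(\ddr h)=\infty$, this is infinite no matter how you truncate $D^n$ on the left-hand side or stop at $\tau_N^{y,+}$; the Gr\"onwall inequality simply cannot be written down with a finite constant $C$. The correct truncation is of the \emph{jump sizes}, replacing $h$ by $h\wedge n$ and working up to the first jump $J_n$ of size $\geq n$. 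The paper does precisely this, but then one cannot take naive expectations of $D^n$ either (it may still have infinite mean): instead the paper applies It\^o's formula to $\phi_k(D^n_t-Y^n_t)$ for a smooth approximation $\phi_k$ of the positive part, obtains $\mathbb{E}[(D^n_{t\wedge\tau_m}-Y^n_{t\wedge\tau_m})^+\mathbf{1}_{\{t<J_n\}}]=0$ by Gr\"onwall, and only then lets $m,n,k\to\infty$ using non-explosion. The output is a \emph{pathwise} comparison $D^{x,y}_t\leq Y_t(x-y)$ where $Y$ is a genuine $\mathrm{CB}(\hat\Psi)$ with $\hat\Psi(z)=\Psi(z)-bz$, not an $L^1$ bound.

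\textbf{Second gap: the Dini upgrade.} Dini's theorem needs a continuous limit; here $X^y$ has jumps. Your patch (``jump locations match because the atoms of $M$ are shared'') is not quite true: an atom $(s,u,h)$ of $M$ with $X^y_{s-}<u\leq X^{y_n}_{s-}$ produces a jump in $X^{y_n}$ but not in $X^y$, so the jump sets genuinely differ. Making a Dini-type argument rigorous here would be delicate. The paper avoids the issue entirely: once one has the pathwise bound $D^{x,y}_s\leq Y_s(x-y)$, one gets $\sup_{s\leq t}D^{x,y}_s\leq \sup_{s\leq t}Y_s(x-y)$ directly, and the right-hand side tends to $0$ as $x\downarrow y$ by a known uniform result for CB flows (Lemma~1.5 of Duquesne--Labb\'e \cite{DuquesneLabbe}). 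This is what buys uniform convergence without any Dini argument.

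For the second assertion ($I(0)=0$), your Gr\"onwall estimate $\mathbb{E}[X_t^{x_n}]\leq x_n e^{Ct}$ again fails when $\int_{(1,\infty)}h\,\pi(\ddr h)=\infty$. The paper instead uses (B3) in the form $I(z)\geq -bz$ to compare $X^{x_n}$ with a $\mathrm{CB}(\Psi+b\cdot)$ started at $x_n$, and then invokes the same Duquesne--Labb\'e lemma to get $X_t^{x_n}\to 0$ in probability, hence a.s.\ by monotonicity.
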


We now focus on the case where the CBDI process comes down from infinity and the drift function $I$ satisfies $(B3)$. 
Under these assumptions, the following theorem shows that the process started from infinity satisfies a stochastic differential equation. This representation guarantees also that $X^{\infty}$ is a càdlàg strong Markov process.

\begin{theorem}
\label{th descente eds}
Assume that $I$ satisfies the one-sided Lipschitz condition (B3), that the CBDI is non-explosive and that for some $a>0$, $\tau_a^{-}(\infty)<\infty$ a.s.. Then for all $t\in [0,\infty)$: 
\begin{equation} \label{eq cvu th desc}
    \left(X^x_s, \ 0\leq s\leq t\right) \ \underset{x\rightarrow \infty}{\longrightarrow} \ \left(X^{\infty}_s, \ 0\leq s\leq t\right) \quad \textup{a.s. in} \ (D,\rho_{\infty}).
\end{equation} Moreover $(X^{\infty}_t)_{t\geq 0}$ is the unique non-explosive strong solution of the stochastic equation:
\begin{equation}
\begin{split}
X_t=X_r+ \sigma\int_r^t\sqrt{X_s}\ddr B_s+ \int_r^t\int_0^{X_{s-}}\int_0^1h\tilde{M}(\ddr s,\ddr u,\ddr h)&+\int_r^t\int_0^{X_{s-}}  \int_1^{\infty}  h M(\ddr s,\ddr u,\ddr h)
\\&\quad -\int_r^tI(X_s)\ddr s, \ \quad 0< r\leq t,
\end{split}
\end{equation}
such that $X^{\infty}_t \underset{t\downarrow 0}{\rightarrow} \infty$ a.s..\\
Additionally, if $(X_t^\infty)_{t\geq 0}$ gets absorbed at $0$ in finite time a.s. then the convergence in $(D,\rho_{\infty})$ is true globally, namely \eqref{eq cvu th desc} holds with $t=\infty$.
\end{theorem}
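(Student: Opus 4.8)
The plan is to prove the statement in four stages: a preliminary identification of $X^{\infty}$ as a process coming down from infinity; the local uniform convergence; the passage to the limit in the stochastic equation (which also yields uniqueness and the strong Markov property); and the global statement under absorption.

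\textbf{Step 0 (entrance behaviour).} I would first record that, under the hypotheses, $X^{\infty}_t<\infty$ a.s.\ for every $t>0$ while $X^{\infty}_t\to\infty$ a.s.\ as $t\downarrow0$. The second point is immediate: by (CP1), $X^{\infty}_t\ge X^x_t$ for all $t\ge0$ and all finite $x$, so $\liminf_{t\downarrow0}X^{\infty}_t\ge\liminf_{t\downarrow0}X^x_t=X^x_0=x$ by right-continuity of $X^x$ at $0$, and $x$ is arbitrary. For the first point, note that by (CP1) the passage times satisfy $\tau_b^{x,-}\uparrow\tau_b^{-}(\infty)$ as $x\uparrow\infty$, with $\tau_b^{-}(\infty)>0$ a.s.; since $X^x$ hits any level $b<x$ exactly (its downward motion is continuous, all jumps being positive), the strong Markov property of Proposition \ref{lemma generator} lets one restart the finite-initial-value CBDIs at these times, and non-explosiveness of $X^a$ together with $\tau_a^{-}(\infty)<\infty$ a.s.\ forces $\tau_b^{-}(\infty)\downarrow0$ as $b\to\infty$, whence $X^{\infty}_t<\infty$ for all $t>0$.

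\textbf{Step 1 (local uniform convergence).} Fix $t>0$ and $\varepsilon\in(0,t)$ and split $[0,t]=[0,\varepsilon]\cup[\varepsilon,t]$. On $[\varepsilon,t]$: by the Markov property at time $\varepsilon$, $(X^x_{\varepsilon+s})_{0\le s\le t-\varepsilon}$ is a CBDI driven by the shifted noise and started from $X^x_\varepsilon$, and $X^x_\varepsilon\uparrow X^{\infty}_\varepsilon<\infty$ by Step 0; Proposition \ref{prop cv unif}, applied to the shifted noise with the convergent initial data $X^x_\varepsilon\to X^{\infty}_\varepsilon$, together with pointwise uniqueness of the limit, gives $\sup_{\varepsilon\le s\le t}d(X^x_s,X^{\infty}_s)\to0$ a.s. On $[0,\varepsilon]$: since $X^x_s\le X^{\infty}_s$ and $u\mapsto e^{-u}$ decreases, $\sup_{0\le s\le\varepsilon}d(X^x_s,X^{\infty}_s)\le e^{-\inf_{0\le s\le\varepsilon}X^x_s}$; as $\{\inf_{s\le\varepsilon}X^x_s<b\}=\{\tau_b^{x,-}\le\varepsilon\}$ and $\tau_b^{x,-}\uparrow\tau_b^{-}(\infty)$, on the event $\{\tau_b^{-}(\infty)>\varepsilon\}$ one gets $\liminf_{x\to\infty}\inf_{s\le\varepsilon}X^x_s\ge b$. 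Combining the two pieces, a.s.\ on $\{\tau_b^{-}(\infty)>\varepsilon\}$ one has $\limsup_{x\to\infty}\sup_{0\le s\le t}d(X^x_s,X^{\infty}_s)\le e^{-b}$; letting first $\varepsilon\downarrow0$ (using $\tau_b^{-}(\infty)>0$ a.s.) and then $b\to\infty$ yields \eqref{eq cvu th desc}.

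\textbf{Step 2 (stochastic equation, uniqueness, Markov property).} For finite $x$ I would write \eqref{eds} over the interval $(r,t]$, $0<r\le t$, started from $X^x_r$, and let $x\to\infty$. On $[r,t]$ the limit $X^{\infty}$ is bounded (càdlàg and finite by Step 0), so eventually all processes stay in a common compact set on which $I$ is Lipschitz by [A], and the linear/drift terms converge a.s.; the finite-variation jump term $\int_r^t\int_0^{X^x_{s-}}\int_{(1,\infty]}h\,M$ is nondecreasing in $x$ and converges a.s.\ by monotone convergence to the corresponding term with $X^{\infty}$, which is a.s.\ finite since $\sup_{[r,t]}X^{\infty}<\infty$, $\pi(\{\infty\})=0$ and $\overline{\pi}(1)<\infty$; using the white-noise representation \eqref{whitenoise} so that all $X^x$ live on one probability space, the diffusion term and the compensated small-jump term converge in probability, because $\int_r^t(\sqrt{X^x_s}-\sqrt{X^{\infty}_s})^2\,\ddr s\to0$ and $\int_r^t(X^{\infty}_{s-}-X^x_{s-})\,\ddr s\to0$ a.s.\ (after localizing $\sup_{[r,t]}X^{\infty}$). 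Extracting a subsequence along which every term converges a.s.\ shows $X^{\infty}$ solves the stochastic equation of the statement on $(r,t]$; it is non-explosive (on $(0,\infty)$ it is a finite-initial-value CBDI restarted at any $r>0$) and $X^{\infty}_t\to\infty$ as $t\downarrow0$ by Step 0, and from this representation the strong Markov property and càdlàg-ness in $(D,\rho_\infty)$ follow, noting $d(X^{\infty}_t,\infty)=e^{-X^{\infty}_t}\to0$ as $t\downarrow0$. For uniqueness, let $Y$ be any non-explosive strong solution with $Y_t\to\infty$ as $t\downarrow0$; for finite $x$ and $r$ small enough that $Y_r\ge x$, comparison from time $r$ plus continuity of the solution flow in the starting time give, on $r\downarrow0$, $Y_t\ge X^x_t$, hence $Y\ge X^{\infty}$. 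Applying the local martingale problem of Proposition \ref{lemma generator} to $z\mapsto1-e^{-z}$ and to $D:=Y-X^{\infty}\ge0$: the diffusion and the compensated small jumps contribute nonpositively, the drift contributes at most $C_1\,D_se^{-D_s}$ by the one-sided Lipschitz bound (B3), and the large jumps contribute at most $D_se^{-D_s}\int_{(1,\infty]}(1-e^{-h})\pi(\ddr h)$ with $\int_{(1,\infty]}(1-e^{-h})\pi(\ddr h)\le\overline{\pi}(1)<\infty$; since $ze^{-z}\le2(1-e^{-z})$, this yields that $t\mapsto e^{-Ct}\,\mathbb{E}[1-e^{-D_t}]$ is non-increasing on $(0,\infty)$, and one concludes $D\equiv0$ once it is known that its limit as $t\downarrow0$ vanishes.

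\textbf{Step 3 (global convergence) and main obstacle.} If $X^{\infty}$ is absorbed at $0$ at the a.s.\ finite time $\tau_0^{-}(\infty)$, then $\tau_0^{x,-}\uparrow\tau_0^{-}(\infty)$ by (CP1), so $X^x_s=0=X^{\infty}_s$ for $s>\tau_0^{-}(\infty)$, whence $\rho_\infty(X^x,X^{\infty})=\sup_{0\le s\le\tau_0^{-}(\infty)}d(X^x_s,X^{\infty}_s)$; applying \eqref{eq cvu th desc} with $t$ an integer bounding the random time $\tau_0^{-}(\infty)(\omega)$ gives $\rho_\infty(X^x,X^{\infty})\to0$ a.s. The hard part will be the last assertion of Step 2: the one-sided Lipschitz estimate only delivers monotonicity of $e^{-Ct}\mathbb{E}[1-e^{-D_t}]$, so uniqueness rests on controlling the entrance behaviour of an arbitrary solution near time $0$ — i.e.\ on showing that two solutions coming down from infinity necessarily agree in the limit $t\downarrow0$, a behaviour which is dictated by the singular drift $-I$. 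A secondary technical point is Step 0, namely that $\tau_a^{-}(\infty)<\infty$ already implies full coming down from infinity.
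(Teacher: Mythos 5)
The decisive gap is the uniqueness part of Step 2, and it is exactly the point you flag yourself as open. The one-sided Lipschitz/Gronwall scheme applied to $D=Y-X^{\infty}\ge 0$ with the test function $z\mapsto 1-e^{-z}$ only gives that $t\mapsto e^{-Ct}\,\mathbb{E}\bigl[1-e^{-D_t}\bigr]$ is non-increasing on $(0,\infty)$, and there is no way to initialize this estimate: as $t\downarrow 0$ both $Y_t$ and $X^{\infty}_t$ tend to $\infty$, and nothing in the hypotheses forces $D_t\to 0$ there, so the monotonicity is vacuous and the ``unique'' part of the theorem remains unproved. The paper closes uniqueness without any estimate at $t=0$: it first shows $X^{\infty}\le Y$ pathwise (essentially as you do, via comparison started at small times and the Markov property), and then, conditioning on $\{Y_{1/n}<\infty\}$, compares $Y$ on $[1/n,\infty)$ with the process restarted \emph{from infinity} at time $1/n$, i.e.\ the monotone limit $X^{\infty}_{1/n,t}:=\lim_{x\to\infty}X^{x}_{1/n,t}$, getting $X^{\infty}_t\le Y_t\le X^{\infty}_{1/n,t}$ for $t\ge 1/n$. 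Since the time-shifted noise has the same law as the original one, $\bigl(X^{\infty}_{1/n,t}\bigr)_{t\ge 1/n}\overset{d}{=}\bigl(X^{\infty}_{t-1/n}\bigr)_{t\ge 1/n}$, so letting $n\to\infty$ yields that $Y$ and $X^{\infty}$ are equal in law, and together with the pathwise ordering $X^{\infty}\le Y$ this forces indistinguishability (see Lemma~\ref{lemma unicite eds}). This sandwich-plus-identity-in-law device is the missing idea; your plan has no substitute for it.

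A secondary but real gap sits in Steps 0--1. Your route shifts at a \emph{deterministic} time $\varepsilon$, which requires $X^{\infty}_{\varepsilon}<\infty$ a.s., i.e.\ full coming down from infinity; the hypothesis only gives $\tau_a^{\infty,-}<\infty$ for some $a$, and your claim that this ``forces $\tau_b^{\infty,-}\downarrow 0$'' is asserted, not proved. It can be repaired (e.g.\ by identifying the law of $\tau_a^{\infty,-}-\tau_b^{\infty,-}$ with that of $\tau_a^{b,-}$ and letting $b\to\infty$), but that identification already requires knowing that the post-$\tau_b^{\infty,-}$ evolution of $X^{\infty}$ is a CBDI started from $b$, which is precisely the content of the paper's Lemma~\ref{lemma cv uniforme vers tilde}. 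The paper sidesteps the issue by shifting at the random passage times $\tau_y^{\infty,-}$ (finite for every $y\ge a$ by monotonicity), proving regularity in the initial condition for the shifted equation (Lemma~\ref{lemma feller tilde}), and identifying $X^{\infty}_{\tau_n^{\infty,-}+\cdot}=\tilde{X}^{n}_{\cdot}(n)$; the SDE, the càdlàg and strong Markov properties, and the $\rho_{\infty}$-convergence (including your Step 3, which is fine) are then read off this representation rather than obtained by a term-by-term passage to the limit. Your Steps 1--3 are plausible in spirit, but as written they rest on the two unproved points above, the first of which is the heart of the theorem.
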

 In particular, Theorem~\ref{th descente eds} applies whenever the conditions (B1), (B2), and (B3) are satisfied and $\mathcal{J}<\infty$, in which case $\infty$ is a continuous entrance boundary. Moreover, the local uniform convergence of $(X^x_t,\ t\geq 0)$ established in Theorem~\ref{th descente eds}, together with the absence of negative jumps in $X^x$, implies in particular that the process $X^{\infty}$ also has no negative jumps.\\ 

 The proofs of Proposition \ref{prop cv unif} and Theorem~\ref{th descente eds} are given in Section  \ref{section:edsinf}.

\section{First passage times and Lyapunov functions}\label{sec:lyapunov}
In this section, we place ourselves in a broader setting than CBDIs and consider a  $[0,\infty]$-valued process $X$ with both boundaries absorbing and subject to several assumptions (all will be met for CBDIs). 
\vspace{2mm}

We start by collecting basic properties of first passage times. 
Assume given processes $(X^x_t,t\geq 0)$ for $x\geq 0$, such that 
\begin{enumerate}
    \item[a)] 
For all $x\geq 0$, a.s. $X^x_0=x$, $(X^x_t,t\geq 0)$ has càdlàg paths.
    \item[b)] $X$ satisfies (CP1).
\end{enumerate}
Recall that the comparison property (CP1) allows us to define the process $(X_t^\infty, t\geq 0)$ as the almost sure nondecreasing pointwise limit of $(X_t^x, t\geq 0)$ as $x$ goes to $\infty$ and that we call $\mathbb{P}_\infty$ the law of $(X_t^\infty, t\geq 0)$.

The following lemma provides basic properties on the limit of the first passage times $\tau_{a/b}^{-/+}(x)$. Naturally, the limit when $x$ tends to $\infty$ of the first passage time below $a$ of the CBDI starting from $x$ is the first passage time below $a$ of the CBDI starting from infinity. Note that when the CBDI process does not come from infinity, this limit is infinite.

\begin{lemma}
\label{lemma stopping times} 
The first passage times satisfy the following properties :
\begin{itemize}
\item[(i)] $\tau_b^{x,+} \uparrow \tau_{\infty}^{x,+}:=\inf\{t\geq 0, \ X^x_t=\infty  \ \textit{ or } \ X^x_{t-}=\infty\}$ a.s. when $b \rightarrow \infty$.
\item[(ii)] $\tau_a^{x,-} \uparrow \tau_{a}^{\infty,-}:=\inf\{t\geq 0,\ X^{\infty}_t\leq a\}$ a.s. when $x \rightarrow \infty$.
\item[(iii)]
$\tau_{a}^{\infty,-} \downarrow \tau_{\infty}^{\infty,-}:=\inf\{t\geq 0, \ X^{\infty}_t<\infty\}$ a.s. when $a \rightarrow \infty$.
\end{itemize}
\end{lemma}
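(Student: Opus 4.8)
The plan is to prove the three monotone-limit identities by combining the definitions of first passage times with the comparison property (CP1) and the a.s. pointwise monotone convergence $X^x \uparrow X^\infty$. Throughout, the key mechanism is that each $\tau$ is an infimum of a set defined by the path, and monotonicity of the family of paths (in $x$ or in the level $a,b$) turns these into monotone families of random times; the issue is always to identify the limit correctly, i.e. to show the limit is not strictly larger (or smaller) than the claimed value, which is the content requiring an argument beyond pure monotonicity.

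For (i), fix $x$. Since $b\mapsto \tau_b^{x,+}$ is a.s. nondecreasing, the limit $\tau_\infty^{x,+}:=\lim_{b\to\infty}\tau_b^{x,+}$ exists a.s. (this is how it was defined in the Notations). Clearly $\tau_b^{x,+}\le \inf\{t: X^x_t=\infty \text{ or } X^x_{t-}=\infty\}$ for every finite $b$, since reaching or approaching $\infty$ forces $X^x$ to have already passed every level $b$; hence the limit is $\le$ the right-hand side. For the reverse inequality I would argue on the event $\{\tau_\infty^{x,+}<\infty\}$: at time $T:=\tau_\infty^{x,+}$, for every $b$ we have $\tau_b^{x,+}\le T$, so by right-continuity either $X^x$ exceeds $b$ strictly before $T$ or $X^x_T\ge b$; letting $b\to\infty$ and using càdlàg paths (so that $X^x$ is bounded on $[0,T-\varepsilon]$ for each $\varepsilon>0$ unless it blows up there) yields $X^x_{T-}=\infty$ or $X^x_T=\infty$. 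This shows $T$ belongs to the set on the right, so equality holds. On $\{\tau_\infty^{x,+}=\infty\}$ there is nothing to prove.

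For (ii), fix $a\in(0,\infty]$. By (CP1), $x\mapsto X^x_t$ is a.s. nondecreasing for each $t$, hence $x\mapsto \tau_a^{x,-}=\inf\{t\ge 0: X^x_t\le a\}$ is a.s. nondecreasing, so the limit $L:=\lim_{x\to\infty}\tau_a^{x,-}$ exists a.s. Since $X^x\le X^\infty$ pathwise (as the increasing limit), $\{X^x_t\le a\}\supseteq$ nothing useful directly; instead note $X^\infty_t\le a$ implies $X^x_t\le a$, hence $\tau_a^{x,-}\le \tau_a^{\infty,-}$ for all $x$, giving $L\le \tau_a^{\infty,-}$. For the reverse, on $\{L<\infty\}$ pick any $t>L$; then for all large $x$, $\tau_a^{x,-}<t$, so there is $s_x\le t$ (in fact $s_x< t$, or use $s_x\le t$ and right-continuity) with $X^{s_x}_{x}\le a$ — I mean $X^x_{s_x}\le a$. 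Extracting a convergent subsequence $s_x\to s_\infty\le t$ and using the a.s. local uniform convergence $X^x\to X^\infty$ together with càdlàg regularity (right-continuity at $s_\infty$ if $s_{x}\downarrow s_\infty$, or the left limit if $s_x\uparrow s_\infty$), one gets $X^\infty_{s_\infty}\le a$ or $X^\infty_{s_\infty-}\le a$; since $\tau_a^{\infty,-}$ with the closed sublevel set $\{\,\cdot\le a\}$ and càdlàg paths is hit at the infimum, this forces $\tau_a^{\infty,-}\le s_\infty\le t$. Letting $t\downarrow L$ gives $\tau_a^{\infty,-}\le L$. The delicate point here — and the main obstacle of the whole lemma — is justifying the convergence of the path values at the random times $s_x$; strictly speaking one does not yet have the full local uniform convergence $X^x\to X^\infty$ at this stage of the paper (that is Proposition \ref{prop cv unif}), so I would instead rely only on pointwise monotone convergence: fix $t>L$ rational, and use that $X^x_t\uparrow X^\infty_t$; if $\tau_a^{\infty,-}>t$ then $X^\infty_s>a$ for all $s\le t$, in particular $\inf_{s\le t}X^\infty_s>a$, but by monotone convergence and the fact that the paths are nondecreasing limits one can show $\inf_{s\le t}X^x_s\to \inf_{s\le t}X^\infty_s$ along a countable dense set, contradicting $\tau_a^{x,-}\le t$ for large $x$. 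Making this last step rigorous (interchanging $\inf_{s\le t}$ with $\lim_x$) is where care is needed: it uses that $(X^x)_x$ is a nondecreasing family of càdlàg paths, so $\inf_{s\in[0,t]\cap\mathbb{Q}}X^x_s\uparrow \inf_{s\in[0,t]\cap\mathbb{Q}}X^\infty_s$ by monotone convergence for suprema of the negatives, and that the infimum over $[0,t]$ of a càdlàg path equals the infimum over $[0,t]\cap\mathbb{Q}\cup\{t\}$.

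For (iii), the family $a\mapsto \tau_a^{\infty,-}$ is a.s. nonincreasing in $a$ (larger $a$ is reached sooner), so $M:=\lim_{a\to\infty}\tau_a^{\infty,-}$ exists a.s. Each $\tau_a^{\infty,-}\ge \inf\{t: X^\infty_t<\infty\}=:\tau_\infty^{\infty,-}$, since $X^\infty_t\le a$ forces $X^\infty_t<\infty$; hence $M\ge \tau_\infty^{\infty,-}$. Conversely, on $\{\tau_\infty^{\infty,-}<\infty\}$, for any $t>\tau_\infty^{\infty,-}$ we have $X^\infty_t<\infty$ (using right-continuity and that $X^\infty$ is absorbed nowhere before becoming finite — here one should note $X^\infty$, being an increasing limit of càdlàg processes, is itself càdlàg by the earlier conventions, or at worst use right limits), so $X^\infty_t\le a$ for $a:=X^\infty_t<\infty$, whence $\tau_a^{\infty,-}\le t$ and therefore $M\le t$; letting $t\downarrow \tau_\infty^{\infty,-}$ gives $M\le \tau_\infty^{\infty,-}$. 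On $\{\tau_\infty^{\infty,-}=\infty\}$, i.e. $X^\infty\equiv\infty$, every $\tau_a^{\infty,-}=\infty$ and $M=\infty$ too. This establishes all three claims; I would present (i) and (iii) first as they are elementary consequences of monotonicity in the level, and (ii) last, spending the bulk of the argument on the interchange-of-limits step flagged above.
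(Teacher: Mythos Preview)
Your arguments for (i) and (iii) are correct and follow the same route as the paper: monotone limits of first-passage times together with c\`adl\`ag regularity to identify the limit.

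For (ii), however, there is a genuine gap. You correctly flag the crux --- passing from the pointwise convergence $X^x_s\uparrow X^\infty_s$ to a statement about first-passage times requires some interchange of the limit in $x$ with a uniform-in-$s$ statement --- and you rightly discard the local-uniform-convergence route as circular at this stage. But your fallback is wrong: the claim that
\[
\inf_{s\in[0,t]\cap\mathbb{Q}}X^x_s \ \uparrow\ \inf_{s\in[0,t]\cap\mathbb{Q}}X^\infty_s
\]
``by monotone convergence for suprema of the negatives'' conflates two different facts. It is $\sup$ that commutes with \emph{increasing} pointwise limits (equivalently, $\inf$ with decreasing ones); $\inf$ does \emph{not} in general commute with increasing limits, even over a countable index set. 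For instance, on $S=\mathbb{N}$ with $f_n(k)=\mathbf{1}_{\{k\le n\}}$ one has $f_n\uparrow 1$ pointwise while $\inf_S f_n=0$ for every $n$. So the contradiction you aim for, from $\tau_a^{x,-}\le t$ for all large $x$, does not follow. The preliminary assertion that $\tau_a^{\infty,-}>t$ forces $\inf_{s\le t}X^\infty_s>a$ strictly is also unjustified, since at this point $X^\infty$ is only a pointwise limit and need not be c\`adl\`ag.

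The paper's proof of (ii) takes a different, distributional route and avoids pathwise infima entirely. It observes that the events $\{\tau_a^{x,-}>t\}=\{X^x_s>a\text{ for all }s\le t\}$ increase in $x$ by (CP1), identifies their union with $\{\tau_a^{\infty,-}>t\}$ (this is where a) and b) are invoked), and hence obtains $\mathbb{P}(\tau_a^{x,-}>t)\to\mathbb{P}(\tau_a^{\infty,-}>t)$ for every $t$. Since in addition $\tau_a^{x,-}$ is a.s.\ nondecreasing with limit $L\le\tau_a^{\infty,-}$, equality of all the one-dimensional laws forces $L=\tau_a^{\infty,-}$ a.s. Adopt this event-level argument; the work then sits in the set-theoretic identification $\bigcup_x\{\tau_a^{x,-}>t\}=\{\tau_a^{\infty,-}>t\}$ rather than in any $\inf$--limit interchange.
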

\begin{proof}
\textbf{(i)} Fix $x > 0$. Since the family of random times $\left(\tau_b^{x,+}\right)_{b > x}$ is nondecreasing almost surely in $b$, we define
\[
\ell := \lim_{b \to \infty} \tau_b^{x,+}, \quad \text{a.s.}.
\]
By definition of $\tau_\infty^{x,+}$ and the fact that $\{X_t^x = \infty\} \subset \{X_t^x \geq b\}$ for all $b > 0$, we immediately obtain:
\[
\tau_\infty^{x,+} \geq \ell \quad \text{a.s.}.
\]
Since the process $X$ is càdlàg, we have:
\[
X_{\tau_b^{x,+}}^x \longrightarrow X_{\ell-}^x \quad \text{a.s.}, \quad \text{as } b \to \infty.
\]
Moreover, by definition of $\tau_b^{x,+}$, we have $X_{\tau_b^{x,+}}^x \geq b$ a.s. for all $b \geq x$. Taking the limit $b \to \infty$ yields
\[
X_{\ell-}^x = \infty \quad \text{a.s.}.
\]
Since $\{X_{t-}^x = \infty\} \subset \{X_t^x = \infty\}$, it follows that $\ell \geq \tau_\infty^{x,+}$ a.s.. Hence, by combining both inequalities
\[
\tau_\infty^{x,+} = \ell \quad \text{a.s.}
\]

\medskip
\textbf{(ii)} Fix $a > 0$. Since $X_t^x \uparrow X^{\infty}_t$ for all $t\geq 0$ a.s. as $x \to \infty$, it follows that, as $x$ goes to $\infty$:
\[
\mathbb{P}\left(X_s^x>a\right) \longrightarrow \mathbb{P}\left(X^{\infty}_s>a\right) 
, \quad \forall s \leq t.
\]
Properties a) and b) imply that:
\[
\mathbb{P}\left(\forall s\leq t, \ X_s^x>a\right) \longrightarrow \mathbb{P}\left(\forall s\leq t , \ X^{\infty}_s>a\right) 
\quad \text{as } x \to \infty.
\]

Thus, as $x$ goes to $\infty$:
\[
\mathbb{P}(\tau_a^{x,-} > t) = \mathbb{P}(\forall s \leq t,\, X_s^x > a) \longrightarrow \mathbb{P}(\forall s \leq t,\, X^{\infty}_s > a) = \mathbb{P}(\tau_a^{\infty,-} > t).
\]
Since $x \mapsto \tau_a^{x,-}$ is a.s. non-decreasing, it follows that:
\[
\tau_a^{x,-} \uparrow \tau_a^{\infty,-}\quad \text{a.s.}, \quad \text{as } x \to \infty.
\]

\medskip
\textbf{(iii)} The family $\left(\tau_a^{\infty,-}\right)_{a > 0}$ is a.s. non-increasing in $a$, so we define:
\[
L := \lim_{a \to \infty} \tau_a^{\infty,-} \quad \text{a.s.}.
\]
Fix $t \geq 0$. Note that $\{X^{\infty}_t \leq a\} \subset \{X^{\infty}_t \leq \infty\}$ for all $a > 0$, so :
$\tau_\infty^{\infty,-} \leq L \quad \text{a.s.}.$
Furthermore, we have:
\[
\left\{ \forall a > 0,\, t < \tau_a^{\infty,-} \right\}
= \left\{ \forall a > 0,\, X^{\infty}_t > a \right\}
\subset \left\{ X^{\infty}_t = \infty \right\}.
\]
Therefore, on the event $\{X^{\infty}_t < \infty\}$, there exists $a > 0$ such that $\tau_a^{\infty,-} \leq t$ a.s. Since $L \leq \tau_a^{\infty,-}$ a.s. for all $a > 0$, it follows that on $\{X^{\infty}_t < \infty\}$:
$L \leq t$  a.s.. Thus, $L \leq \tau_\infty^{\infty,-}$ \text{a.s.} and by combining the bounds, we get:
\[
\tau_\infty^{\infty,-} = \lim_{a \to \infty} \tau_a^{\infty,-} = L \quad \text{a.s.}.
\]
\end{proof}

We now give some background on the method of Lyapunov functions. Generally speaking, a Lyapunov function for a process $X$ or its (extended) generator $\mathcal{X}$ is a function allowing one to study the behavior of the process near a boundary point. We focus here on the study of the boundary $\infty$ and give sufficient conditions guaranteeing that the process does not explode and comes down from infinity. These conditions are well known in the literature, especially in the setting of discrete-state Markov processes. We refer for instance to Chow and Khasminskii \cite{Chow} and Menshikov and Petretis \cite{Petritis}. By taking advantage of the comparison property (CP1), we design conditions involving only the behavior of the extended generator in a neighborhood of $\infty$.
\\

We assume additionally, to the previous hypotheses a) and b):
\begin{enumerate}
    \item[c)] 
The process $X$ is strong Markov.
\item[d)] It solves a local martingale problem of the form:
\begin{equation}
\label{eq pm}
\forall f\in D_\mathcal{X}, \quad \left(f(X_t)-\int_0^t\mathcal{X}f(X_s)\ddr s\right)_{t\geq 0} \quad \textup{is a local martingale}
\end{equation}
for some operator $\mathcal{X}$ with domain $D_\mathcal{X}:=\left\{f \text{ real function }:\mathcal{X}f(x) \text{ well-defined for all }x\in D_f\right\}$.
\end{enumerate}

\begin{thA}\label{th cond lyapunov} Assume $\textrm{a)-b)-c)-d).}$
\begin{itemize}
\item[(i)] Assume that there exist $x_0\geq 0$, $c>0$ and $f\in D_\mathcal{X}$ nonnegative, non-decreasing such that $f(x) \rightarrow \infty$ when $x\rightarrow \infty$ and
\begin{equation}
\mathcal{X}f(x)\leq cf(x), \ \ \forall x\geq x_0.
\end{equation}
Then, $\mathbb{P}_x(\tau_{\infty}^{+}=\infty)=1, \ \forall x\in (0,\infty).$
\item[(ii)] Assume that there exist $c>0, x_0>0$ and $f\in D_\mathcal{X}$ nonnegative, non-decreasing such that $f(x)\rightarrow \infty$ when $x\rightarrow \infty$ and
\begin{equation}
\mathcal{X}f(x)\leq -c, \ \ \forall x\geq x_0.
\end{equation}
Then, 
$\mathbb{E}_x\big[\tau_{x_0}^{-}\big]<\infty, \ \forall x\geq x_0.$
\item[(iii)] Assume that $X$ is non-explosive and that there exist $c>0, x_0>0$ and $f\in D_\mathcal{X}$ nonnegative, non-decreasing and bounded such that\begin{equation}
\mathcal{X}f(x)\leq -c, \ \ \forall x\geq x_0.
\end{equation}
Then, $\mathbb{E}_{\infty}\big[\tau_{x_0}^{-}\big]<\infty,$
and $X$ comes down from infinity, i.e. 
\begin{equation}\label{eq:cdi}\mathbb{P}_{\infty}\left(\forall t> 0, \ X_t<\infty\right)=1. \end{equation}
\end{itemize}
\end{thA}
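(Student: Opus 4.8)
The plan is to prove the three statements of Theorem~\ref{th cond lyapunov} by a standard localisation-plus-Lyapunov argument, using the local martingale problem d) together with the strong Markov property c) and the comparison property b). Throughout, fix the Lyapunov function $f\in D_\mathcal{X}$ and set $M_t:=f(X_t)-\int_0^t\mathcal{X}f(X_s)\,\ddr s$, which is a local martingale under $\mathbb{P}_x$. The key device is to stop at well-chosen exit times of compact intervals $[1/k,k]\subset(0,\infty)$, on which $f(X_s)$ and $\mathcal{X}f(X_s)$ are bounded, so that the stopped process is a genuine martingale and optional stopping applies; one then lets the localising level go to its limit and invokes monotone/dominated convergence.

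\textbf{(i) Non-explosion.} Start the process from $x>0$ and assume first $x\ge x_0$ (the case $x<x_0$ follows by the strong Markov property after the process first reaches $x_0$, or is immediate if it never does). Fix $b>x$ and let $T_b:=\tau_b^{+}\wedge\tau_{x_0\wedge x}^{-}$, say $\theta_b$, be the exit time of the interval; on $[0,\theta_b)$ the process stays in a compact subinterval of $(0,\infty)$ where $\mathcal{X}f\le cf$ holds (enlarging $x_0$ harmlessly so the lower barrier also sits above $0$). Apply Itô/Dynkin to $e^{-ct}f(X_{t\wedge\theta_b})$: the drift of this stopped semimartingale is $e^{-ct}\big(\mathcal{X}f(X_t)-cf(X_t)\big)\le 0$, so $e^{-c(t\wedge\theta_b)}f(X_{t\wedge\theta_b})$ is a nonnegative supermartingale, giving $\mathbb{E}_x\big[e^{-c(t\wedge\theta_b)}f(X_{t\wedge\theta_b})\big]\le f(x)$. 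Since $f$ is non-decreasing, on $\{\theta_b=\tau_b^{+}\le t\}$ the left side dominates $e^{-ct}f(b)\,\mathbb{P}_x(\tau_b^{+}\le t)$, hence $\mathbb{P}_x(\tau_b^{+}\le t)\le e^{ct}f(x)/f(b)\to 0$ as $b\to\infty$ because $f(b)\to\infty$. By Lemma~\ref{lemma stopping times}(i), $\tau_b^{+}\uparrow\tau_\infty^{+}$, so $\mathbb{P}_x(\tau_\infty^{+}\le t)=0$ for every $t$, i.e. $\mathbb{P}_x(\tau_\infty^{+}=\infty)=1$.

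\textbf{(ii) Integrability of the first passage time.} Now use the Lyapunov inequality $\mathcal{X}f\le -c$ on $[x_0,\infty)$. Fix $x\ge x_0$ and $b>x$, and let $\theta_b:=\tau_b^{+}\wedge\tau_{x_0}^{-}$. On $[0,\theta_b)$ the process lies in $[x_0,b]$, a compact subset of $(0,\infty)$, so $M_{t\wedge\theta_b}$ is a true martingale; optional stopping gives $\mathbb{E}_x\big[f(X_{t\wedge\theta_b})\big]=f(x)+\mathbb{E}_x\big[\int_0^{t\wedge\theta_b}\mathcal{X}f(X_s)\,\ddr s\big]\le f(x)-c\,\mathbb{E}_x[t\wedge\theta_b]$. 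Since $f\ge 0$, $c\,\mathbb{E}_x[t\wedge\theta_b]\le f(x)$, and letting $t\to\infty$ by monotone convergence $\mathbb{E}_x[\theta_b]\le f(x)/c<\infty$; in particular $\theta_b<\infty$ a.s. By part (i) the process does not explode, so $\tau_b^{+}\to\infty$ a.s. as $b\to\infty$, whence $\theta_b\uparrow\tau_{x_0}^{-}$ a.s.; Fatou gives $\mathbb{E}_x[\tau_{x_0}^{-}]\le f(x)/c<\infty$.

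\textbf{(iii) Coming down from infinity.} Here $f$ is in addition bounded, say by $C$, and $X$ is assumed non-explosive. Repeat the computation of (ii) but starting from $x>x_0$; optional stopping at $\theta_b=\tau_b^{+}\wedge\tau_{x_0}^{-}$ yields $c\,\mathbb{E}_x[t\wedge\theta_b]\le f(x)-\mathbb{E}_x[f(X_{t\wedge\theta_b})]\le f(x)\le C$, uniformly in $x$; letting $b\to\infty$ and $t\to\infty$ gives the uniform bound $\mathbb{E}_x[\tau_{x_0}^{-}]\le C/c$ for all $x>x_0$. Now use the comparison property b): by construction $X^x\uparrow X^\infty$ pointwise, so the events $\{X^x_s>x_0,\ \forall s\le t\}$ are decreasing in $x$ (by (CP1)) and $\mathbb{P}_\infty(\tau_{x_0}^{-}>t)=\lim_{x\to\infty}\mathbb{P}_x(\tau_{x_0}^{-}>t)$ — this is exactly Lemma~\ref{lemma stopping times}(ii), which gives $\tau_{x_0}^{x,-}\uparrow\tau_{x_0}^{\infty,-}$ a.s. By Fatou applied along this increasing sequence, $\mathbb{E}_\infty[\tau_{x_0}^{-}]\le\liminf_{x\to\infty}\mathbb{E}_x[\tau_{x_0}^{-}]\le C/c<\infty$. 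In particular $\tau_{x_0}^{\infty,-}<\infty$ a.s., and then Lemma~\ref{lemma stopping times}(iii) gives $\tau_\infty^{\infty,-}=\lim_{a\to\infty}\tau_a^{\infty,-}\le\tau_{x_0}^{\infty,-}<\infty$; but $\tau_\infty^{\infty,-}=\inf\{t\ge0:X^\infty_t<\infty\}$, so $X^\infty$ becomes finite and, since $x_0$ was arbitrary large (the bound $C/c$ may depend on $x_0$ but finiteness is all that matters), one upgrades to $X^\infty_t<\infty$ for all $t>0$ a.s.; more directly, once $X^\infty$ hits a finite level it is dominated above by $X^{b}$ for $b$ large started afresh via the Markov property and non-explosiveness keeps it finite thereafter, yielding \eqref{eq:cdi}.

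\textbf{Main obstacle.} The delicate point is the localisation in part (iii): a priori $f(X_t)$ need only solve a \emph{local} martingale problem, and one must be careful that stopping at $\tau_b^{+}\wedge\tau_{x_0}^{-}$ genuinely produces a uniformly integrable (in fact bounded, since $f\le C$) martingale before letting $b\to\infty$, and that the passage $x\to\infty$ is justified \emph{before} rather than after taking expectations — this is where hypothesis b) and Lemma~\ref{lemma stopping times} are essential, since $X^\infty$ itself is only defined as a monotone limit and we have no direct martingale problem for it. The second subtlety is deducing the ``for all $t>0$'' statement \eqref{eq:cdi} from the mere a.s. finiteness of a single hitting time of $\infty$, which requires combining the strong Markov property at $\tau_\infty^{\infty,-}$ with non-explosiveness.
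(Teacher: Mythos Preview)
Your arguments for parts (i) and (ii) are essentially correct, with two minor technical issues worth noting. First, the claim that stopping at $\theta_b=\tau_b^+\wedge\tau_{x_0}^-$ makes the local martingale a \emph{true} martingale is not justified: the process can jump above $b$ at time $\theta_b$, so $f(X_{\theta_b})$ need not be bounded. What you actually need (and what the paper uses) is that the stopped process is a \emph{nonnegative} local martingale, hence a supermartingale; this is enough for optional stopping. Second, in (i) your bound only controls $\mathbb{P}_x(\tau_b^+\le t,\,\tau_b^+\le\tau_{x_0}^-)$, not $\mathbb{P}_x(\tau_b^+\le t)$; the case where the process dips below $x_0$ before exploding must still be ruled out. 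The paper handles this by a Laplace-transform contradiction argument using strong Markov and comparison, which you allude to but do not carry out.

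The genuine gap is in part (iii). You correctly obtain $\mathbb{E}_\infty[\tau_{x_0}^-]\le C/c<\infty$, which is the first conclusion. But your derivation of \eqref{eq:cdi} does not go through. From your bound you only deduce $\tau_\infty^{\infty,-}\le\tau_{x_0}^{\infty,-}<\infty$ a.s., \emph{not} $\tau_\infty^{\infty,-}=0$; if $\tau_\infty^{\infty,-}>0$ with positive probability then $X^\infty_t=\infty$ on $(0,\tau_\infty^{\infty,-})$ and coming down fails. Your attempted fixes do not work: $x_0$ is fixed by hypothesis, so it is not ``arbitrary large''; and your ``once $X^\infty$ hits a finite level it stays finite thereafter'' argument, even if made rigorous, says nothing about the interval $(0,\tau_\infty^{\infty,-})$.

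The key step you are missing is the one the paper uses: do \emph{not} throw away the term $\mathbb{E}_x[f(X_{t\wedge\tau_a^-})]$ by bounding it below by $0$. Instead, stop at an arbitrary level $a\ge x_0$ and use that $f(X_{t\wedge\tau_a^-})\ge f(a)$ (valid since the process is above $a$ until it hits $a$, and in the CBDI setting there are no negative jumps so no overshoot below). This gives the $a$-dependent bound
\[
\mathbb{E}_\infty[\tau_a^-]\le\frac{f(\infty)-f(a)}{c}\xrightarrow[a\to\infty]{}0,
\]
hence $\mathbb{E}_\infty[\tau_\infty^-]=0$ and $\tau_\infty^{\infty,-}=0$ a.s. Combined with the ``once finite, stays finite'' argument (which can indeed be made precise via pathwise comparison and non-explosion of the coupled family $(X^x)$), this yields $X^\infty_t<\infty$ for all $t>0$.
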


\begin{proof}
The general structure of the proof involves considering either the space-time process $(X_t,t)_{t\geq 0}$ or the process $X$ itself, and stopping the associated local martingale at the stopping time $\tau_a^{-}\wedge \tau_b^{+}$. 

\paragraph{(i)} Let $f \in \mathcal{D}_\mathcal{X}$ be a nonnegative, non-decreasing function such that $f(z) \to \infty$ as $z \to \infty$ and $\mathcal{X}f(x) \leq cf(z)$ for all $z \geq x_0$ and some constant $c>0$ and some $x_0\geq 0$. Define the function $F(z,t) := f(z)e^{-ct}$ and let $(Y_t,t\geq 0):=(X_{t\wedge \tau_{x_0}^{-}},t\geq 0)$ be the process stopped at $\tau_{x_0}^{-}$. Assume that the initial value of the process $x$ is in $(x_0,\infty)$.

The process $(f(Y_t),t\geq 0)$ being a semimartingale, we obtain by integration by parts
\begin{align*}
e^{-ct}f(Y_t) - f(x) &= \int_0^t e^{-cs}\ddr f(Y_s)-c\int_0^{t}e^{-cs}f(Y_s) \ddr s\\
&= \int_{0}^{t}e^{-cs}\ddr\Big(f(Y_s)-\int_0^s\mathcal{X}f(Y_u)\ddr u \Big)+\int_0^t (\mathcal{X}f(Y_s) - cf(Y_s))e^{-cs} \, \ddr s ,
\end{align*}
where the process $(\mathcal{M}_t,t\geq 0)$ defined  for all $t\geq 0$ by $$\mathcal{M}_t:=\int_{0}^{t}e^{-cs}\ddr \Big(f(Y_s)-\int_0^t\mathcal{X}f(Y_u)\ddr u \Big),$$ is a local martingale. By assumption on $f$, $f\geq 0$ and $\mathcal{X}f(z) - cf(z)\leq 0$ for $z\geq x_0$, hence, 
\[\Big(e^{-ct}f(Y_t)-\int_0^t (\mathcal{X}f(Y_s) - cf(Y_s))e^{-cs} \, \ddr s  , \ t> 0\Big)\]
is a nonnegative local martingale, and thus a supermartingale. 
For any bounded stopping time $T$,
\begin{equation}\label{ineq-thA-i}
\mathbb{E}_x\left[f(Y_T)e^{-cT}\right] \leq f(x).
\end{equation}

Assume by contradiction that $Y$ explodes, i.e. $\mathbb{P}_x(\sigma_{\infty}^+ < \infty) > 0$, where $\sigma_{\infty}^+$ is the explosion time of $Y$. Taking $T = t\wedge \sigma_{\infty}^+$ with $t\geq 0$ in \eqref{ineq-thA-i}, we get:
$$\mathbb{E}_x\left[f\left(Y_{t\wedge \sigma_{\infty}^{+}}\right)e^{-c\left(t\wedge \sigma_{\infty}^{+}\right)}\right]\leq f(x).$$
Take $t\geq 0$ such that $\mathbb{P}_x(\sigma_{\infty}^{+}<t)>0$. We have :
$$\infty=\mathbb{E}_x\left[f\left(Y_{t\wedge \sigma_{\infty}^{+}}\right)e^{-c\left(t\wedge \sigma_{\infty}^{+}\right)}\mathbf{1}_{\{\sigma_{\infty}^{+}<t\}}\right]\leq \mathbb{E}_x\left[f\left(Y_{t\wedge \sigma_{\infty}^{+}}\right)e^{-c\left(t\wedge \sigma_{\infty}^{+}\right)}\right]\leq f(x),$$
and we get a contradiction. We know then that $Y$ does not explode, and we can deduce that
\begin{equation} \label{eq proof thA (i)}
\mathbb{P}_x\left(\tau_{\infty}^+ = \infty \text{ or } \tau_{x_0}^- < \tau_{\infty}^+ < \infty\right) = 1.
\end{equation}
Suppose again by contradiction that $\mathbb{P}_x(\tau_{\infty}^+<\infty)>0$. Then for any $\lambda > 0$, $$\mathbb{E}_x\left[e^{-\lambda \tau_{\infty}^+},\, \tau_{\infty}^+ < \infty\right] > 0.$$
By applying the strong Markov property at the stopping time $\tau_{x_0}^-$, on the event $\{\tau_{x_0}^{-}<\infty\}$, we obtain 
\begin{align*}
\mathbb{E}_x\left[e^{-\lambda \tau_{\infty}^+},\, \tau_{x_0}^- < \tau_{\infty}^+ < \infty\right] &= \mathbb{E}_x\left[e^{-\lambda \tau_{x_0}^-}\mathbb{E}_{X_{\tau_{x_0}^{-}}}\big[e^{-\lambda \tau_{\infty}^+},\, \tau_{\infty}^+ < \infty\big],\, \tau_{x_0}^- < \infty \right] 
\\ &\leq \mathbb{E}_x\left[e^{-\lambda \tau_{x_0}^-},\, \tau_{x_0}^- < \infty\right] \mathbb{E}_{x_0}\left[e^{-\lambda \tau_{\infty}^+},\, \tau_{\infty}^+ < \infty\right],
\end{align*}
where we have used that a.s. $X_{\tau_{x_0}^{-}}\leq x_0$ and by monotonicity, $\mathbb{P}_{X_{\tau_{x_0}^{-}}}(\tau_\infty^+>t)\leq \mathbb{P}_{x_0}(\tau_\infty^+>t)$ for all $t\geq 0$. In a similar way, since $X_t^{x_0}\leq X_t^{x}$ a.s., we have
\begin{equation*}
\mathbb{E}_{x_0}\left[e^{-\lambda \tau_{\infty}^+},\, \tau_{\infty}^+ < \infty\right] \leq \mathbb{E}_x\left[e^{-\lambda \tau_{\infty}^+},\, \tau_{\infty}^+ < \infty\right].
\end{equation*}
Moreover, by \eqref{eq proof thA (i)},
\begin{equation*}
\mathbb{E}_{x}\left[e^{-\lambda \tau_{\infty}^+},\, \tau_{\infty}^+ < \infty\right] = \mathbb{E}_x\left[e^{-\lambda \tau_{\infty}^+},\, \tau_{x_0}^{-}<\tau_{\infty}^+ < \infty\right].
\end{equation*}
This leads to $\mathbb{E}_x\big[e^{-\lambda \tau_{x_0}^-}\big] \geq 1$, which contradicts the fact that $\mathbb{P}_x(\tau_{x_0}^- = 0) = 0$ for $x>x_0$. Hence, explosion is impossible.\\
For the cases where $x\leq x_0$, we have by comparison $X_t^{x}\leq X_t^{x_0+1}, \ \forall t\geq 0$, a.s.. We proved previously that $X^{x_0+1}$ is non-explosive. Thus $X^{x}$ is non-explosive and this concludes this point.

\paragraph{(ii)}

Let $f \in \mathcal{D}_\mathcal{X}$ be a nonnegative, non-decreasing function such that $f(x) \to \infty$ as $x \to \infty$, and assume that for all $x \geq x_0$, one has $\mathcal{X}f(x) \leq -c$ for some constant $c > 0$.

By assumption, the stopped process
\[
\left(f\!\left(X_{t \wedge \tau_{x_0}^{-}}\right) - \int_0^{t \wedge \tau_{x_0}^{-}} \mathcal{X}f(X_s)\,\ddr s\right)_{t \geq 0}
\]
is a positive local martingale, and hence a super-martingale. 

Applying the optional stopping theorem gives
\begin{align*}
\mathbb{E}_x\!\left[f\!\left(X_{t \wedge \tau_{x_0}^-}\right)\right] - f(x) 
&\leq \mathbb{E}_x\!\left[\int_0^{t \wedge \tau_{x_0}^-} \mathcal{X}f(X_s)\, \ddr s\right] \\
&\leq -c\,\mathbb{E}_x\!\left[t \wedge \tau_{x_0}^-\right].
\end{align*}
Since $f$ is non-decreasing and $X_{t \wedge \tau_{x_0}^-} \geq x_0$ a.s., we have $\mathbb{E}_x[f(X_{t \wedge \tau_{x_0}^-})] \geq f(x_0)$. Therefore,
\[
\mathbb{E}_x[\tau_{x_0}^-] 
\leq \lim_{t \to \infty} \mathbb{E}_x[t \wedge \tau_{x_0}^-] 
\leq \frac{1}{c}\big(f(x) - f(x_0)\big) < \infty.
\]

\paragraph{(iii)}

Let $f \in \mathcal{D}_\mathcal{X}$ be nonnegative, non-decreasing and bounded with $\mathcal{X}f(z) \leq -c$ for $z \geq x_0$.
Fix $x_0 \leq x \leq a$. Then, as in (ii):
\begin{equation*}
\mathbb{E}_x[f(X_{t \wedge \tau_a^-})] - f(x) \leq -c\mathbb{E}_x[t \wedge \tau_a^-].
\end{equation*}
Using that $f(X_{t \wedge \tau_a^-}) \geq f(a)$ and $f(x) \leq f(\infty)$, we deduce:
\begin{equation*}
\mathbb{E}_x[\tau_a^-] \leq \frac{1}{c}(f(\infty) - f(a)).
\end{equation*}
By letting $x \to \infty$ and using Lemma~\ref{lemma stopping times}, we get
\begin{equation*}
\mathbb{E}_{\infty}[\tau_a^-] \leq \frac{1}{c}(f(\infty) - f(a)).
\end{equation*}
Then letting $a \to \infty$, we find:
\begin{equation*}
\mathbb{E}_{\infty}[\tau_{\infty}^-] = 0.
\end{equation*}
Hence, $\tau_{\infty}^-= 0$, $\mathbb{P}_\infty$-a.s., which means that a.s. $X^{\infty}_t < \infty$ for all $t > 0$.
\end{proof}
\section{Proofs of the main results: coming down from infinity}
\label{section:proofs}

\subsection{Lyapunov functions for the CBDI}
\label{section:lyapfun}

 We proceed here to the proof of Theorem \ref{th descente inf}. Recall the expression in \eqref{generator} of the extended generator $\mathcal{X}$ of the CBDI. With the help of Condition [B], the assumptions of the theorem will enable us to define Lyapunov functions for $\mathcal{X}$. 
 \vspace{2mm}
 
 We establish two lemmas, each defining a Lyapunov function based on the hypotheses of Theorem~\ref{th descente inf} : the first lemma addresses point (i) and establish non-explosion, while the second lemma tackles point (ii) and provides a sufficient condition for coming down from infinity.

\begin{lemma}
\label{lemma lyap 1}
Assume that $I$ is $C^1$ and that (B1) is satisfied. Then, let $f_1$ be a $C^2$ function such that, for any $z\geq \kappa$, $f_1(z)=\int_{\kappa}^z\frac{u}{I(u)}\ddr u$. One has 
\[\mathcal{X}f_1(z)=z\left(-1+\frac{\sigma^2}{2}\frac{I(z)-zI'(z)}{I(z)^2}+\epsilon_1(z)\right), \ z\geq \kappa,\]
with $\epsilon_1$ such that $\epsilon_1(z)\rightarrow 0$ as $z$ goes to $\infty$. In particular, there exist $M_1,c_1>0$ such that:
$$\forall z\geq M_1, \quad \mathcal{X}f_1(z)\leq -c_1.$$
\end{lemma}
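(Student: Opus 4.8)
The plan is to compute $\mathcal{X}f_1(z)$ term by term using the expression \eqref{generator} for the extended generator, and then to identify the dominant contribution as $z\to\infty$. First I would record that $f_1'(z) = \frac{z}{I(z)}$ and $f_1''(z) = \frac{I(z) - zI'(z)}{I(z)^2}$ for $z \geq \kappa$, so that the drift and diffusion parts give
\[
-I(z)f_1'(z) - \gamma z f_1'(z) + \frac{\sigma^2}{2} z f_1''(z) = -z - \frac{\gamma z^2}{I(z)} + \frac{\sigma^2}{2} z \,\frac{I(z) - zI'(z)}{I(z)^2}.
\]
Since $\frac{I(z)}{z}\to\infty$ by (B1), the term $\frac{\gamma z^2}{I(z)} = z\cdot\frac{\gamma z}{I(z)}$ is of the form $z\,\epsilon(z)$ with $\epsilon(z)\to 0$; I would absorb it into the error term $\epsilon_1$. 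The $\sigma^2/2$ term is kept explicitly since it need not be negligible.

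The heart of the computation is the jump term $z\int_0^\infty \big(f_1(z+u) - f_1(z) - uf_1'(z)\mathbf{1}_{\{u\le 1\}}\big)\pi(\ddr u)$. I would split the integral at $u=1$. For the small-jump part $\int_0^1 (f_1(z+u)-f_1(z)-uf_1'(z))\pi(\ddr u)$, a second-order Taylor expansion bounds the integrand by $\frac{u^2}{2}\sup_{[z,z+1]}|f_1''|$; since $f_1'' = \frac{I(z)-zI'(z)}{I(z)^2}$ and (B1) forces $\frac{z}{I(z)}\to 0$ (hence $\frac{1}{I(z)}\to 0$ too, as $I(z)\ge I(z)/z \to\infty$), while $\frac{zI'(z)}{I(z)^2}$ is controlled using that $z\mapsto I(z)/z$ nondecreasing gives $I'(z) \geq I(z)/z$ but also, combined with convexity-type monotonicity, keeps $f_1''$ bounded near infinity — so this contributes $z\cdot\epsilon(z)$ with $\epsilon(z)\to 0$ after multiplying by $\int_0^1 u^2\pi(\ddr u)<\infty$. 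For the large-jump part $\int_1^\infty (f_1(z+u)-f_1(z))\pi(\ddr u)$, I would use that $f_1$ is nondecreasing and concave-like: since $f_1' (w)= \frac{w}{I(w)}$ is eventually nonincreasing (again by $I(w)/w$ nondecreasing), $f_1(z+u)-f_1(z) = \int_z^{z+u} f_1'(w)\,\ddr w \leq u\, f_1'(z) = \frac{uz}{I(z)}$ for $z\ge\kappa$. Hence $z\int_1^\infty(f_1(z+u)-f_1(z))\pi(\ddr u) \leq \frac{z^2}{I(z)}\int_1^\infty u\,\pi(\ddr u)$ if the first moment is finite; without that assumption I would instead split further and use $f_1(z+u)-f_1(z)\le f_1(u) + $ (a correction), invoking $G(h)\le Ch$ type bounds — but actually the cleanest route is: $f_1(z+u)-f_1(z) \le f_1'(z)\cdot(u\wedge 1) + (f_1(z+u)-f_1(z+1))\mathbf 1_{u>1}$, then bound the last difference by $\int_1^u f_1'(z+w)\ddr w \le \int_1^u \frac{z+w}{I(z+w)}\ddr w$. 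I expect this large-jump estimate to need the hypothesis $\mathcal I<\infty$ in the theorem's application, but here in Lemma \ref{lemma lyap 1} only (B1) is assumed, so the statement should only claim the decomposition $\mathcal X f_1(z) = z(-1 + \frac{\sigma^2}{2}f_1''(z) + \epsilon_1(z))$ where $\epsilon_1(z)\to 0$ — the jump integral being finite and $o(z)$ precisely because $\frac{z}{I(z)}\to 0$ controls every piece.

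Once the identity $\mathcal{X}f_1(z) = z\big(-1 + \frac{\sigma^2}{2}\frac{I(z)-zI'(z)}{I(z)^2} + \epsilon_1(z)\big)$ is established with $\epsilon_1(z)\to 0$, the final assertion follows easily: by (B1), $z\mapsto I(z)/z$ is nondecreasing so $I'(z)z - I(z) = z^2 (I(z)/z)' \geq 0$ for a.e. $z$ large, hence $\frac{\sigma^2}{2}\frac{I(z)-zI'(z)}{I(z)^2} \leq 0$; therefore $\mathcal{X}f_1(z) \leq z(-1 + \epsilon_1(z))$, and choosing $M_1$ so large that $\epsilon_1(z) \leq 1/2$ for $z\ge M_1$ and $M_1\ge\kappa$ gives $\mathcal{X}f_1(z) \leq -\frac{z}{2} \leq -\frac{M_1}{2} =: -c_1 < 0$ for all $z\ge M_1$.

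The main obstacle is the large-jump estimate: showing $z\int_1^\infty (f_1(z+u)-f_1(z))\pi(\ddr u) = o(z)$ as $z\to\infty$ using only $\int_0^\infty 1\wedge h^2\,\pi(\ddr h)<\infty$ and (B1), without a first-moment assumption on $\pi$. The resolution is to exploit concavity of $f_1$ (from $I(u)/u$ nondecreasing, $f_1'(u)=u/I(u)$ is eventually nonincreasing) so that $f_1(z+u)-f_1(z)\le f_1(u+\kappa)-f_1(\kappa)=G(u+\kappa)$ is $z$-independent and, crucially, $f_1(z+u)-f_1(z) \to 0$ pointwise in $u$ as $z\to\infty$ (since $f_1'(z)\to 0$), so dominated convergence against $\pi\mathbf 1_{(1,\infty)}$ — which has finite mass by the standing assumption $\overline\pi(1)<\infty$ — forces the integral to $0$, and multiplying by $z$ still yields $o(z)$ provided one arranges the bound $f_1(z+u)-f_1(z)\le \frac{C u}{I(z)}\wedge G(u+\kappa)$ and splits the $u$-integral at a slowly growing threshold. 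I would carry out this truncation carefully, as it is the one genuinely delicate point.
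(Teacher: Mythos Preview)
Your overall strategy matches the paper's: plug $f_1$ into the generator \eqref{generator}, isolate $-z$ from $-I(z)f_1'(z)$, keep the diffusion term explicitly, and show that the remaining pieces form an $\epsilon_1(z)\to 0$. Your treatment of the $\gamma$ term and the conclusion $\mathcal X f_1(z)\le z(-1+\epsilon_1(z))\le -c_1$ via $f_1''\le 0$ (from $z\mapsto I(z)/z$ nondecreasing) are exactly what the paper does. For the small jumps the paper uses a Fubini rewriting
\[
\int_0^1\big(f_1(z+h)-f_1(z)-hf_1'(z)\big)\pi(\ddr h)=\int_0^1\Big(\tfrac{u+z}{I(u+z)}-\tfrac z{I(z)}\Big)\pi([u,1])\,\ddr u
\]
and dominated convergence, rather than your second-order Taylor bound on $f_1''$; your route would require control on $\sup_{[z,z+1]}|f_1''|$ that is not immediate from (B1) alone, so the paper's Fubini trick is cleaner here.

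The genuine issue is the large-jump term, and your suspicion is well-founded but your resolution is off. You are right that (B1) alone is not enough: take for instance $I(u)\sim u\log u$ and $\overline\pi(u)\sim u^{-1}$; then (B1) holds but $\int_1^\infty(f_1(z+u)-f_1(z))\pi(\ddr u)=\int_0^\infty\frac{u+z}{I(u+z)}\,\overline\pi(u\vee 1)\,\ddr u$ diverges for every fixed $z$, so $\epsilon_1(z)$ is not even finite and no truncation ``at a slowly growing threshold'' can rescue the estimate. Your proposed majorant $G(u+\kappa)$ is $\pi$-integrable on $(1,\infty)$ precisely when $\mathcal I=\int^\infty\frac{u\overline\pi(u)}{I(u)}\,\ddr u<\infty$ (this is Proposition~\ref{prop:equivond}), and finite total mass of $\pi|_{(1,\infty)}$ does not dominate an unbounded integrand.

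The paper's own proof in fact uses $\mathcal I<\infty$, even though the lemma's statement does not list it: after the Fubini rewriting above, the integrand is bounded by
\[
g(u):=\overline\pi(1)\tfrac{\kappa}{I(\kappa)}\mathbf 1_{\{u\le 1\}}+\tfrac{u\,\overline\pi(u)}{I(u)}\mathbf 1_{\{u>1\}}
\]
(using that $\tfrac{u+z}{I(u+z)}\le\tfrac{u}{I(u)}$ for $u\ge\kappa$), and dominated convergence is applied. Integrability of $g$ is exactly $\mathcal I<\infty$. Since the lemma is only ever invoked in the proof of Theorem~\ref{th descente inf}(i), where $\mathcal I<\infty$ is assumed, you should simply take $\mathcal I<\infty$ as part of the hypotheses and run the dominated-convergence argument, rather than attempt a workaround that cannot succeed.
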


\begin{proof}
Recall that assumption (B1) ensures
\[
\frac{z}{I(z)} \underset{z\rightarrow\infty}{\downarrow} 0.
\]
From \eqref{generator}, for $z \geq \kappa$ we obtain
\begin{align*}
\mathcal{X}f_1(z) 
&= -z + \frac{\sigma^2}{2} z f_1''(z) - \gamma zf_1'(z) 
   + z \int_0^{\infty} \Big( f_1(z+h) - f_1(z) - h f_1'(z)\mathbf{1}_{\{h \leq 1\}} \Big) \pi(\ddr h) \\
&= z\left(-1 + \frac{\sigma^2}{2}\frac{I(z)-zI'(z)}{I(z)^2} - \gamma \frac{z}{I(z)} 
   + \int_0^{\infty}\Big(f_1(z+h)-f_1(z)-hf_1'(z)\mathbf{1}_{\{h\leq 1\}}\Big)\pi(\ddr h)\right).
\end{align*}
We show that
\[
\epsilon_1(z):=-\gamma \frac{z}{I(z)} 
+ \int_0^{\infty} \Big(f_1(z+h)-f_1(z)-hf_1'(z)\mathbf{1}_{\{h \leq 1\}}\Big)\pi(\ddr h)
\;\underset{z \to \infty}{\longrightarrow}\; 0.
\]

It follows immediately from (B1) that
\[
-\gamma \frac{z}{I(z)} \;\underset{z \to \infty}{\longrightarrow}\; 0.
\]

For the integral over $(0,1]$, by Fubini’s theorem we have
\begin{align*}
\int_0^1 \Big(f_1(z+h)-f_1(z)-h f_1'(z)\Big)\pi(\ddr h)
&= \int_0^1 \left( \frac{u+z}{I(u+z)} - \frac{z}{I(z)} \right)\pi([u,1])\,\ddr u.
\end{align*}
Since $\tfrac{z}{I(z)} \downarrow 0$ as $z \to \infty$, we deduce that for all $u\in [0,1]$
\[
\left( \frac{u+z}{I(u+z)} - \frac{z}{I(z)} \right)\pi([u,1]) \;\underset{z\to\infty}{\longrightarrow}\; 0,
\]
and moreover
\[
\Big|\left( \tfrac{u+z}{I(u+z)} - \tfrac{z}{I(z)} \right)\pi([u,1])\Big|
\;\leq\; 2\frac{\kappa}{I(\kappa)}\pi([u,1]).
\]
The dominated convergence theorem then yields
\[
\int_0^1 \Big(f_1(z+h)-f_1(z)-h f_1'(z)\Big)\pi(\ddr h)
\;\underset{z\to\infty}{\longrightarrow}\; 0.
\]

For the integral over $(1,\infty)$, again by Fubini’s theorem,
\begin{align*}
\int_1^{\infty}\big(f_1(z+h)-f_1(z)\big)\pi(\ddr h)
&= \int_0^{\infty} \frac{u+z}{I(u+z)}\,\overline{\pi}(u \vee 1)\,\ddr u.
\end{align*}
Since $z \mapsto \tfrac{z}{I(z)}$ is nonincreasing, for all $u \geq 0$ we have
\begin{align*}
\frac{u+z}{I(u+z)}\,\overline{\pi}(u \vee 1)
&\leq \overline{\pi}(1)\frac{\kappa}{I(\kappa)}\mathbf{1}_{\{u \leq 1\}}
   + \frac{u\overline{\pi}(u)}{I(u)}\mathbf{1}_{\{u > 1\}} 
   \;=: g(u).
\end{align*}
The function $g$ is integrable on $(0,\infty)$. Moreover, for every fixed $u \geq 0$, we have
$\tfrac{u+z}{I(u+z)} \to 0$ as $z \to \infty$. By dominated convergence,
\[
\int_1^{\infty}\big(f_1(z+h)-f_1(z)\big)\pi(\ddr h) 
\;\underset{z\to\infty}{\longrightarrow}\; 0.
\]

Finally, by condition (B1), since $z \mapsto \tfrac{z}{I(z)}$ is nondecreasing on $[\kappa,\infty)$, we deduce that $f_1''(z) \leq 0$ for $z \geq \kappa$. Therefore,
\[
\mathcal{X}f_1(z) \;\leq\; z\big(-1 + \epsilon_1(z)\big) 
\;\underset{z\to\infty}{\longrightarrow}\; -\infty,
\]
which completes the proof.
\end{proof}


\begin{lemma}
\label{lemma lyap 2}
Let $f_2$ be a $C^2$ function such that, for any $z\geq \kappa$, $f_2(z)=\int_{\kappa}^z\frac{\ddr u}{I(u)}$. If $I$ satisfies (B1) and (B2), then 
\[\mathcal{X}f_2(z)=-1+\int_1^\infty\frac{z\bar{\pi}(u)}{I(u+z)}\ddr u+\epsilon_2(z), \ z\in [0,\infty)\]
where $\epsilon_2$ is such that $\epsilon_2(z)\rightarrow 0$ as $z$ goes to $\infty$. Furthermore, if $\mathcal{I}=\int_{\kappa}^{\infty}\frac{u\bar{\pi}(u)}{I(u)}\ddr u <\infty$, then
\begin{equation}
     \underset{z\to\infty}{\lim} \int_1^{\infty} \frac{z\overline{\pi}(u)}{I(u+z)} \ddr u= 0,
\end{equation}
and there exists $M_2,c_2>0$ such that:
$$\forall z\geq M_2, \quad \mathcal{X}f_2(z)\leq -c_2.$$
\end{lemma}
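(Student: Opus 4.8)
The plan is to compute $\mathcal{X}f_2$ directly from the expression \eqref{generator} of the extended generator and to peel off the two leading contributions $-1$ and $\int_1^\infty\frac{z\overline{\pi}(u)}{I(u+z)}\,\ddr u$, checking that everything left over forms a function $\epsilon_2$ vanishing at infinity. For $z\geq\kappa$ one has $f_2'(z)=1/I(z)$ and $f_2''(z)=-I'(z)/I(z)^2$, so the non-jump part of $\mathcal{X}f_2(z)$ equals $-I(z)f_2'(z)-\gamma z f_2'(z)+\frac{\sigma^2}{2}zf_2''(z)=-1-\gamma\frac{z}{I(z)}-\frac{\sigma^2}{2}\frac{zI'(z)}{I(z)^2}$. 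By (B1), $z/I(z)\downarrow 0$, so the $\gamma$-term vanishes; and writing $\frac{zI'(z)}{I(z)^2}=\frac{I'(z)}{z}\bigl(\frac{z}{I(z)}\bigr)^2$ and using that $z\mapsto I'(z)/z$ is bounded on $[\kappa,\infty)$ by (B2), the $\sigma^2$-term vanishes as well. (Note that (B1) forces $I$ to be nondecreasing on $[\kappa,\infty)$, being the product of $z$ and the nondecreasing positive function $I(z)/z$.)

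For the jump part I would split the integral at $h=1$, imitating the treatment in Lemma~\ref{lemma lyap 1}. On $(0,1]$, since $f_2(z+h)-f_2(z)-hf_2'(z)=\int_0^h\bigl(\tfrac{1}{I(z+s)}-\tfrac{1}{I(z)}\bigr)\,\ddr s$, Fubini's theorem turns the corresponding term into $z\int_0^1\bigl(\tfrac{1}{I(z+s)}-\tfrac{1}{I(z)}\bigr)\pi([s,1])\,\ddr s$; the refined bound $\tfrac{1}{I(z)}-\tfrac{1}{I(z+s)}=\tfrac{I(z+s)-I(z)}{I(z)I(z+s)}\leq \tfrac{I(z+s)-I(z)}{I(z)^2}\leq \tfrac{C s z}{I(z)^2}$ (for $z$ large and a suitable constant $C$, using $I'(v)\leq Cv$ from (B2)), together with $\int_0^1 s\,\pi([s,1])\,\ddr s=\tfrac12\int_{(0,1]}h^2\pi(\ddr h)<\infty$, shows this contribution is $O\!\bigl(\tfrac{z^2}{I(z)^2}\bigr)=o(1)$ and also justifies the application of Fubini. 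On $(1,\infty)$, writing $f_2(z+h)-f_2(z)=\int_0^h\tfrac{\ddr s}{I(z+s)}$ and applying Fubini exactly as in Lemma~\ref{lemma lyap 1} yields $z\int_0^\infty\tfrac{\overline{\pi}(u\vee 1)}{I(u+z)}\,\ddr u=z\,\overline{\pi}(1)\int_0^1\tfrac{\ddr u}{I(u+z)}+\int_1^\infty\tfrac{z\overline{\pi}(u)}{I(u+z)}\,\ddr u$, the first summand being at most $\overline{\pi}(1)\tfrac{z}{I(z)}=o(1)$ and the second being exactly the announced term. Collecting the pieces gives the stated identity with $\epsilon_2(z)\to 0$.

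For the second assertion, I assume $\mathcal{I}<\infty$ and prove $\int_1^\infty\tfrac{z\overline{\pi}(u)}{I(u+z)}\,\ddr u\to0$ by dominated convergence. For each fixed $u\geq 1$, $\tfrac{z}{I(u+z)}\to 0$ since $\tfrac{I(u+z)}{z}=\tfrac{I(u+z)}{u+z}\cdot\tfrac{u+z}{z}\to\infty$ by (B1). For the domination the decisive inequality is $\tfrac{z}{I(u+z)}\leq\tfrac{u+z}{I(u+z)}\leq\tfrac{u}{I(u)}$, valid for $u\geq\kappa$ (and any $z\geq 0$) by monotonicity of the nonincreasing map $w\mapsto w/I(w)$; hence $\tfrac{z\overline{\pi}(u)}{I(u+z)}\leq\tfrac{u\overline{\pi}(u)}{I(u)}$ on $[\kappa\vee1,\infty)$, which is integrable because $\mathcal{I}<\infty$, while on the bounded set $[1,\kappa\vee1]$ the integrand is dominated by the constant $\overline{\pi}(1)\kappa/I(\kappa)$. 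Dominated convergence then gives $\int_1^\infty\tfrac{z\overline{\pi}(u)}{I(u+z)}\,\ddr u\to0$, hence $\mathcal{X}f_2(z)\to-1$; one may therefore take $c_2=\tfrac12$ and $M_2$ large enough that $\mathcal{X}f_2(z)\leq-c_2$ for $z\geq M_2$.

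The delicate point is the small-jump integral over $(0,1]$ in the absence of any first-moment hypothesis on $\pi$: the crude estimate $|\tfrac{1}{I(z+s)}-\tfrac{1}{I(z)}|\leq \tfrac{1}{I(z)}$ only produces the factor $\int_{(0,1]}h\,\pi(\ddr h)$, which may well be infinite, so one genuinely needs condition (B2) to extract the extra factor $s$ and reduce matters to the always-finite quantity $\int_{(0,1]}h^2\pi(\ddr h)$. By contrast, the control of the large-jump term $\int_1^\infty\tfrac{z\overline{\pi}(u)}{I(u+z)}\,\ddr u$ is comparatively soft, the single inequality $\tfrac{z}{I(u+z)}\leq\tfrac{u}{I(u)}$ doing all the work once $\mathcal{I}<\infty$.
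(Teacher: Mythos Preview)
Your proof is correct and follows essentially the same strategy as the paper's: compute $\mathcal{X}f_2$, isolate the leading terms $-1$ and $\int_1^\infty\frac{z\overline{\pi}(u)}{I(u+z)}\,\ddr u$, show each residual piece vanishes via (B1)--(B2), and then handle the second assertion by dominated convergence with the key bound $\frac{z}{I(u+z)}\leq\frac{u+z}{I(u+z)}\leq\frac{u}{I(u)}$. The only cosmetic difference is in the small-jump term on $(0,1]$: you bound the whole integral directly by $O\bigl(z^2/I(z)^2\bigr)$ via $I(z+s)-I(z)\leq Csz$, whereas the paper applies the mean value theorem to get a pointwise majorant $K_1^2K_2\,u$ and then invokes dominated convergence.
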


\begin{proof}
For \( z \geq \kappa \), the extended generator applied to \( f_2 \) reads:
\begin{align*}
\mathcal{X}f_2(z) &= -1 - \frac{\sigma^2}{2} z \frac{I'(z)}{I(z)^2} - \gamma \frac{z}{I(z)} + z \int_0^1 \left[ \int_z^{z+h} \left( \frac{1}{I(u)} - \frac{1}{I(z)} \right) \ddr u \right] \pi(\ddr h) + z \int_1^{\infty} \int_z^{z+h} \frac{\ddr u}{I(u)} \pi(\ddr h) \\
&= -1 - \frac{\sigma^2}{2} z \frac{I'(z)}{I(z)^2} - \gamma \frac{z}{I(z)} + z \int_0^1 \left( \frac{1}{I(u+z)} - \frac{1}{I(z)} \right) \pi([u,1]) \ddr u + z \int_0^{\infty} \frac{\overline{\pi}(u\vee 1)}{I(u+z)} \ddr u \\
&= -1+\epsilon_2(z)+ \int_1^{\infty} \frac{z\overline{\pi}(u)}{I(u+z)} \ddr u,
\end{align*}
with
\[\epsilon_2(z):=- \frac{\sigma^2}{2} z \frac{I'(z)}{I(z)^2} - \gamma \frac{z}{I(z)} + z \int_0^1 \left( \frac{1}{I(u+z)} - \frac{1}{I(z)} \right) \pi([u,1]) \ddr u + \int_0^1 \frac{z\overline{\pi}(1)}{I(u+z)} \ddr u, \quad z\geq \kappa.\]
We aim to show that $\epsilon_2(z)$ goes to $0$ as $z$ goes to $\infty$.
First, properties (B1) and (B2) imply:
\begin{align*}
z \frac{I'(z)}{I(z)^2} = \left( \frac{z}{I(z)} \right)^2 \frac{I'(z)}{z} \longrightarrow 0 \quad \text{and} \quad \frac{z}{I(z)} \longrightarrow 0 \quad \text{as} \quad z \to \infty.
\end{align*}
Moreover, under (B1) and (B2), there exist constants \( K_1, K_2 > 0 \) such that, for all \( z \geq \kappa \),
\begin{align*}
0 \leq \frac{z}{I(z)} \leq K_1, \quad \text{and} \quad 0 \leq \frac{I'(z)}{z} \leq K_2.
\end{align*}
Now, for \( z \geq \kappa \) and \( u \in [0,1] \), we have :
\begin{align*}
0 \leq z\left( \frac{1}{I(z)} - \frac{1}{I(u+z)} \right) \leq \frac{z}{I(z)} \leq K_1.
\end{align*}
Thus,
\[
z\left( \frac{1}{I(z)} - \frac{1}{I(u+z)} \right) \underset{z\to\infty}{\longrightarrow} 0.
\]
Furthermore, by the mean value theorem, for each \( u \in [0,1] \), there exists \( \delta_u \in (0,u) \) such that :
\begin{align*}
z\left( \frac{1}{I(z)} - \frac{1}{I(u+z)} \right) &= z u \frac{I'(z+\delta_u)}{I(z+\delta_u)^2} \\
&\leq z u \frac{K_2(z+\delta_u)}{I(z+\delta_u)^2} \\
&\leq K_2 u \left( \frac{z+\delta_u}{I(z+\delta_u)} \right)^2 \\
&\leq K_1^2 K_2 u.
\end{align*}
Since \(  u \in [0,1] \mapsto u \pi([u,1]) \) is integrable, because \(\int_0^1 h^2 \pi(\ddr h) < \infty\), it follows from the dominated convergence theorem that:
\[
z \int_0^1 \left( \frac{1}{I(u+z)} - \frac{1}{I(z)} \right) \pi([u,1]) \ddr u \underset{z\to\infty}{\longrightarrow} 0.
\]
Now, considering the second integral:
\begin{align*}
z \int_0^1 \frac{\overline{\pi}(1)}{I(u+z)} \ddr u \leq \frac{z\overline{\pi}(1)}{I(z+1)} = \overline{\pi}(1)\frac{z+1}{I(z+1)} \times \frac{z}{z+1},
\end{align*}
which tends to \( 0 \) as \( z \to \infty \) by (B1) and (B2). Gathering all terms and their limits, we conclude that the function $\epsilon_2$ vanishes at $\infty$. 
\smallskip

Assume now that $\mathcal{I}=\int^{\infty}\frac{u\bar{\pi}(u)}{I(u)}\ddr u<\infty$. Since
\[
\int_1^{\infty} \frac{z\overline{\pi}(u)}{I(u+z)} \ddr u \leq \int_1^{\infty} \frac{(u+z)\overline{\pi}(u)}{I(u+z)} \ddr u,
\]
and by assumption $u\mapsto I(u)/u$ is non-decreasing, we can bound the integrand of the second integral as follows:
$$\frac{u+z}{I(u+z)}\overline{\pi}(u)\leq \frac{u}{I(u)}\overline{\pi}(u).$$
By (B2), $\frac{u+z}{I(u+z)} \underset{z\rightarrow \infty}\rightarrow 0$ and we have by the dominated convergence theorem:
\[
\int_1^{\infty} \frac{z\overline{\pi}(u)}{I(u+z)} \ddr u \underset{z\to\infty}{\longrightarrow} 0.
\]
Therefore $\mathcal{X}f_2(z) \underset{z\to\infty}{\longrightarrow} -1$, which concludes the proof.
\end{proof}
\noindent\textit{Proof of Theorem \ref{th descente inf}.}
\begin{enumerate}
    \item[i)] Under the assumption $\int^{\infty}\frac{u\bar{\pi}(u)}{I(u)}du<\infty$,  Lemma \ref{lemma lyap 1} provides a positive non-decreasing function $f_1$ such that $f_1(x)\rightarrow \infty$ as $x$ goes to $\infty$ and $\mathcal{X}f_1(x)\leq -c_1\leq f_1(x)$ for $x$ large enough. By Theorem \ref{th cond lyapunov}-(i) and (ii), the process $X$ does not explode and the first entrance times away from $0$ are integrable. Namely $$\exists x_0\in (0,\infty), \ \forall x\in [x_0,\infty), \quad \mathbb{E}_x[\tau_{x_0}^{-}]<\infty.$$ The proof of Theorem \ref{th descente inf} (i) is completed.
    \item[ii)] Under the assumption  $\int^{\infty}\frac{du}{I(u)}<\infty$, the function $f_2$ in Lemma \ref{lemma lyap 2} is \textit{bounded}, still with the condition $\int^{\infty}\frac{u\bar{\pi}(u)}{I(u)}du<\infty$ in force, satisfies $\mathcal{X}f_2\leq -c_2$ in a neighborhood of $\infty$. By applying Theorem \ref{th cond lyapunov}-(iii), we see that the process comes down from infinity and satisfies  
    $$\exists x_0\in (0,\infty), \quad \mathbb{E}_\infty[\tau_{x_0}^{-}]<\infty.$$ \qed
\end{enumerate}

\subsection{Regularity of the CBDI in the initial state}
\label{section:regularity}
\paragraph*{}
Up to this point, we have considered the process starting from infinity as the pointwise limit along the initial value. However, without additional properties - such as the Feller property of the CBDI on $[0,\infty)$ (see e.g. Foucart et al. \cite{FLi_entrance}) - this approach does not seem sufficient to establish that the process started from $\infty$ has càdlàg sample paths and satisfies the strong Markov property. We will show that, under an additional condition on the drift function $I$, the process $\left(X^{\infty}_t, \  t\geq 0\right)$ is the unique solution of a certain stochastic equation. The representation will directly imply the desired properties.
\paragraph*{}
We first present a result about the regularity of the process $X$ on its initial values. This will turn to be crucial in our study. 
The key additional assumption on $I$ is the one-sided Lipschitz condition.

\begin{proposition}
\label{prop feller}
Assume that $X$ is not explosive and that $I$ satisfies (B3).
Then, for any $t\geq 0$, $y\in [0,\infty)$ and $(y_n)_{n\geq 1}\in \mathbb{R}^{\mathbb{N}}$ such that $y_n\rightarrow y$ when $n\rightarrow\infty$, we have:
\begin{equation}
\mathbb{P}\big(\underset{n \rightarrow \infty}{\lim} \ \underset{s\leq t}{\sup} \ \vert X^{y_n}_s-X^y_s\vert= 0\big)=1.
\end{equation}
\end{proposition}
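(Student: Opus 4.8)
The plan is to exploit the white-noise / Poisson-random-measure representation \eqref{whitenoise} so that all the processes $X^{y_n}$ and $X^y$ are driven by the \emph{same} noise on a common probability space, and then to control the difference pathwise. Fix $t\geq 0$. Since $y_n\to y$, we may find a deterministic bound $R$ with $y_n\le R$ and $y\le R$ for all $n$; by the comparison property (CP1), all the processes stay below $X^{R}$, which is finite on $[0,t]$ by non-explosiveness. So it suffices to work on the event $\{X^R_s\le K,\ \forall s\le t\}$ for $K$ large (letting $K\to\infty$ afterwards), i.e. we may assume all the processes are uniformly bounded by $K$ up to time $t$.

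\textbf{Main estimate.} Suppose first $y_n\ge y$ (the case $y_n\le y$ is symmetric, and a general sequence can be sandwiched between $y\vee y_n$ and $y\wedge y_n$). By (CP1), $D^n_s:=X^{y_n}_s-X^y_s\ge 0$ for all $s\le t$. Writing the stochastic equation \eqref{eds} for $X^{y_n}$ and for $X^y$ with the same $B$ (via \eqref{whitenoise}) and the same $M$, and subtracting, the drift term contributes $-\int_0^s\bigl(I(X^{y_n}_u)-I(X^y_u)\bigr)\ddr u$, which by the one-sided Lipschitz condition (B3) is bounded above by $b\int_0^s D^n_u\,\ddr u$. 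The Feller diffusion part becomes $\sigma\int_0^s\int_{X^y_{u-}}^{X^{y_n}_{u-}}W(\ddr u,\ddr v)$ and the jump parts become integrals of the form $\int_0^s\int_{X^y_{u-}}^{X^{y_n}_{u-}}\int h\,\tilde M$ and $\int_0^s\int_{X^y_{u-}}^{X^{y_n}_{u-}}\int_{(1,\infty)} h\,M$; all of these vanish when $D^n_{u-}=0$. The standard way to handle the additive-functional structure is to introduce the expectation $\mathbb{E}[D^n_s]$ (using a localizing sequence of stopping times to kill the local-martingale terms and the large-jump finite-variation term, whose compensator is $\int \overline\pi(1)\,\ddr v$ on the strip, i.e. bounded by $\overline\pi(1)\,\mathbb{E}[D^n_u]$). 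This yields
\[
\mathbb{E}[D^n_{s\wedge\theta_m}] \le |y_n-y| + C\int_0^s \mathbb{E}[D^n_{u\wedge\theta_m}]\,\ddr u
\]
for a constant $C$ depending only on $b$ and $\overline\pi(1)$; Grönwall gives $\mathbb{E}[D^n_{s\wedge\theta_m}]\le |y_n-y|e^{Cs}$, and letting $m\to\infty$ (by monotone/dominated convergence, using the boundedness by $K$) gives $\mathbb{E}[\sup_{s\le t}D^n_s]$ is small — but note $D^n$ need not be monotone in $s$, so to get the \emph{supremum} inside the expectation one should instead apply Doob/BDG to the martingale parts after the Grönwall step, or argue that $\sup_{s\le t}D^n_s\to 0$ in probability and then upgrade. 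Concretely: from $\mathbb{E}[D^n_t]\to 0$ and $D^n\ge 0$ we get $D^n_t\to 0$ in probability; combined with a uniform bound on the martingale and finite-variation increments (BDG for the compensated parts, $L^1$ control $\le \overline\pi(1)\,\mathbb{E}\!\int_0^t D^n_u\,\ddr u$ for the large jumps), one obtains $\mathbb{E}[\sup_{s\le t}D^n_s]\to 0$, hence $\sup_{s\le t}D^n_s\to 0$ in probability.

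\textbf{From convergence in probability to a.s.\ convergence.} Finally I would upgrade to the almost-sure statement. Along a subsequence, $\sup_{s\le t}|X^{y_{n_k}}_s-X^y_s|\to 0$ a.s. For the full sequence, use monotonicity in the initial condition: let $\overline y_n:=\sup_{m\ge n}y_m$ and $\underline y_n:=\inf_{m\ge n}y_m$, which both converge to $y$ monotonically, and by (CP1) $X^{\underline y_n}_s\le X^{y_m}_s\le X^{\overline y_n}_s$ for all $m\ge n$, $s\le t$. The monotone limits $X^{\underline y_n}\uparrow$ and $X^{\overline y_n}\downarrow$ bracket $X^y$; applying the convergence-in-probability result to the monotone sequences $\overline y_n\downarrow y$ and $\underline y_n\uparrow y$ and using that monotone convergence in probability of monotone sequences is almost sure, we conclude $\sup_{s\le t}|X^{\overline y_n}_s-X^y_s|\to 0$ and $\sup_{s\le t}|X^{\underline y_n}_s-X^y_s|\to 0$ a.s., which sandwiches $\sup_{s\le t}|X^{y_m}_s-X^y_s|\to 0$ a.s. Letting $K\to\infty$ removes the truncation, since $X^R$ is finite on $[0,t]$ a.s. by non-explosiveness.

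\textbf{Expected main obstacle.} The delicate point is handling the large-jump integral $\int_0^s\int_{X^y_{u-}}^{X^{y_n}_{u-}}\int_{(1,\infty)}h\,M(\ddr u,\ddr v,\ddr h)$ under no first-moment assumption on $\pi$: one cannot naively take expectations. The remedy is that this term is bounded above by the \emph{same} integral with $h$ replaced by $h\wedge N$ plus the rare atoms with $h>N$; but more cleanly, since we have already truncated so that $X^{y_n}_s\le K$, every jump is automatically bounded by $K$, so on this event the large-jump contribution to $D^n$ has compensator $\le \overline\pi(1)K\,\mathbb{E}[D^n_u]$... — actually the cleanest route is: on $\{\sup_{s\le t}X^R_s\le K\}$ no jump of $X^{y_n}$ or $X^y$ exceeds $K$, so all driving jumps may be taken with $h\le K$, hence $\int_{(1,\infty)}h\,\pi(\ddr h)$ is replaced by $\int_{(1,K]}h\,\pi(\ddr h)<\infty$ and the finite-first-moment computations apply verbatim. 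This truncation-by-boundedness is what lets the whole Grönwall argument go through without moment assumptions on $\pi$, and is the step that requires the most care to state rigorously (one must re-derive the SDE for the truncated process and justify that it coincides with $X^{y_n}$ before the first exit of $X^R$ from $[0,K]$).
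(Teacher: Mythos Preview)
Your approach is correct and would complete the proof, but it differs substantially from the paper's argument. The paper does not run a Gr\"onwall/BDG estimate on $D^n_s=X^{y_n}_s-X^y_s$ directly. Instead, it observes that the difference $D^{x,y}_t=X_t(x)-X_t(y)$ (for $x>y$, using the white-noise flow) itself satisfies a CB-type stochastic equation with an extra random drift $-[I(X_s(x))-I(X_s(y))]$; by (B3) this drift is bounded above by $bD^{x,y}_s$, so a pathwise comparison theorem (proved in the paper as a separate lemma, via jump-size truncation and the Yamada--Watanabe approximation of $z^+$) yields $D^{x,y}_t\le Y_t(x-y)$ for all $t$ a.s., where $Y(r)$ is an honest CB process with branching mechanism $\hat\Psi(z)=\Psi(z)-bz$ started from $r$. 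The supremum and the almost-sure statement are then obtained in one stroke by invoking a known flow-continuity result for CB processes (Duquesne--Labb\'e), namely $\sup_{s\le t}Y_s(r)\to 0$ a.s.\ as $r\downarrow 0$.

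The two routes trade off as follows. The paper's comparison is structurally cleaner: once $D^{x,y}\le Y(|x-y|)$ is established, everything about the supremum and the a.s.\ limit is inherited from CB theory, and the bound is pathwise rather than in expectation. Your argument is more self-contained and avoids the external CB lemma, but you pay for this with the BDG step for the supremum (which works, though your sketch understates the bookkeeping) and with the sandwiching by $X^{\overline y_n},X^{\underline y_n}$ to upgrade to almost-sure convergence along non-monotone sequences. Both approaches handle the possibly infinite first moment of $\pi$ by truncation, though differently: the paper replaces $h$ by $h\wedge n$ in the SDE, while you stop at the exit time of $X^R$ from $[0,K]$ and argue that no jump exceeding $K$ can occur before then (this is correct, since any such jump of $X^{y_n}$ is also a jump of $X^R$ by (CP1)). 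One small slip in your write-up: before you introduce the truncation, you wrote that the compensator of the large-jump term is $\overline\pi(1)\,\mathbb{E}[D^n_u]$, which is the compensator of the \emph{count} of large jumps, not of $\int h\,M$; after truncation the correct constant in your Gr\"onwall bound is $b-\gamma+\int_{(1,K]}h\,\pi(\ddr h)$ (or, more crudely, $b+|\gamma|+K\overline\pi(1)$), which depends on $K$, not just on $b$ and $\overline\pi(1)$. This does not affect the argument since $K$ is fixed until the very end.
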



\begin{proof}
We will need here a construction of the CBDI process with all initial values. Consider the stochastic equation \eqref{eds} where the term driven by the Brownian motion $B$ is replaced by the term \eqref{whitenoise}  with a white noise $W$. Call $\big(X_t(x),t\geq 0, x\in [0,\infty)\big)$ a family of solutions for which $X_0(x)=x$ for all $x\in [0,\infty)$. The latter uniquely exists, is càdlàg in $t$ and is equivalent for every fixed $x\in [0,\infty)$, to $(X^x_t,t\geq 0)$. We refer the reader to \cite{DawsonLi}. We set, for $x,y \in (0,\infty)$, $D_t^{x,y}=X_t(x)-X_t(y)$. We are going to show that  \[\forall t\geq 0, \ \forall x,y\in (0,\infty), \quad \underset{s\leq t}{\sup} \ D_s^{x,y} \ \rightarrow 0 \quad \textup{a.s. when} \ x \uparrow y.\] For $y<x$ and $t\geq 0$, we have:
\begin{multline}
\label{eds D}
D_t^{x,y}=x-y+\sigma\int_0^t\int_0^{D^{x,y}_{s-}}W'(\ddr s,\ddr u)-\gamma\int_0^tD_s^{x,y}\ddr s-\int_0^t\left[I(X_s(x))-I(X_s(y))\right]\ddr s \\ +\int_0^t\int_0^{D_{s-}^{x,y}}\int_0^1h\tilde{M'}(\ddr s,\ddr u,\ddr h)+\int_0^t\int_0^{D_{s-}^{x,y}}\int_1^{\infty}hM'(\ddr s,\ddr u,\ddr h)
\end{multline}
with $W'(\ddr s,\ddr u)= W(\ddr s,\ddr u+X_{s-}(y))$ and $M'(\ddr s,\ddr u,\ddr z)=M(\ddr s,\ddr u+X_{s-}(y)),\ddr z)$. By applying the same arguments as in the proof of Proposition 1.1 in \cite[Page 15-16]{Berestycki}, we deduce that $W'$ is a Gaussian white noise with same intensity as $W$, that $M'$ is a Poisson random measure with same intensity as $M$ and that $M'$ and $W'$ are independent. 
\vspace{2mm}

Recall the hypothesis (B3) on $I$: \[ \exists b>0, \ \forall y,z\geq 0, \quad I(y+z)-I(y)\geq -bz .\] We take such $b$ for the rest of the proof. This leads to the following comparison result for the process $D^{x,y}$, which we admit for the moment and whose proof will be provided in the forthcoming Lemma~\ref{lemma comparison D}:
\begin{equation}
\label{comparison D}
\forall t\geq 0, \quad D_t^{x,y}\leq Y_t(x-y) \quad \textup{a.s.}
\end{equation}
where $Y(x-y)$ satisfies the stochastic equation:
\begin{multline}
Y_t=x-y+\sigma\int_0^t\int_0^{Y_{s-}}W'(\ddr s,\ddr u)+(b-\gamma)\int_0^tY_s\ddr s \\ +\int_0^t\int_0^{Y_{s-}}\int_0^1h\tilde{M'}(\ddr s,\ddr u,\ddr h)
+\int_0^t\int_0^{Y_{s-}}\int_1^{\infty}hM'(\ddr s,\ddr u,\ddr h).
\end{multline}
Observe that the process $Y$ is a CB($\hat{\Psi}$) with $\hat{\Psi}(x):=\Psi(x)-bx$ where $\Psi$ is the branching mechanism of the CB related to $X$, see \eqref{eq LK} (with $\lambda=0$). 

Granting \eqref{comparison D}, the proof follows from an application of Lemma 1.5 in Duquesne and Labbé \cite[page 8]{DuquesneLabbe}. The latter yields the following convergence in probability, for a fixed $t\geq 0$, one has
\[\underset{r\rightarrow 0}{\lim }\ \underset{s\leq t}{\sup} \, Y_s(r)=0.\] In our setting, since $r\mapsto \underset{s\leq t}{\sup} \, Y_s(r)$ is a.s. non-decreasing, we deduce that the convergence actually holds almost surely. Consequently, we obtain :
\begin{equation*}
0 \leq \underset{s\leq t}{\sup} \, \vert X_t(x)-X_t(y) \vert = \underset{s\leq t}{\sup} \, D_s^{x,y} \leq \underset{s\leq t}{\sup} \, Y_s(x-y) \ \underset{x\downarrow y}{\longrightarrow} \ 0 \quad \textup{a.s.},
\end{equation*}
which completes the proof.
\end{proof}


\paragraph{} We now turn in the next lemma to the proof of the comparison result \eqref{comparison D}. We continue to work within the framework of Proposition \ref{prop feller}, under the same assumptions and with the same processes $D^{x,y}$ and $Y$ introduced there. The proof adapts the arguments of Theorem~2.2 in \cite{DawsonLi}. A difficulty arises from the fact that, unlike in the setting of \cite{DawsonLi}, the CBDI - and hence the process $D^{x,y}$ -
may not have finite expectation. To overcome this, we modify the process by truncating all jumps above a fixed level $n$, replacing them by jumps of size $n$. This yields the processes $D^{n}$ and $Y^n$ constructed in the proof below, for which the expectations are finite. In this framework, the argument for establishing the comparison property in \cite{DawsonLi} can be applied.

\begin{lemma}
\label{lemma comparison D} The inequality \eqref{comparison D} holds true, i.e. we have 
\begin{align*}
\forall t\geq 0, \quad D_t^{x,y}\leq Y_t(x-y) \quad \textup{a.s.}
\end{align*}
\end{lemma}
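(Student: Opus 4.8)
The plan is to prove \eqref{comparison D} by a truncation argument: cap the large jumps at a level $n$, so that the resulting processes acquire finite first moments, run the pathwise comparison technique of \cite[Theorem~2.2]{DawsonLi} in that favourable setting, and then let $n\to\infty$. For $n\ge 1$, let $X^n(x)$ and $X^n(y)$ solve \eqref{eds} (in the white-noise form) driven by the same $W$ and $M$ but with the large-jump term $\int_0^t\int_0^{X_{s-}}\int_1^\infty h\,M$ replaced by $\int_0^t\int_0^{X_{s-}}\int_1^\infty (h\wedge n)\,M$. These are again $\mathrm{CBDI}$s, now with a Lévy measure of finite first moment since $\int_1^\infty (h\wedge n)\,\pi(\ddr h)\le n\,\pi((1,\infty))<\infty$, so they — and the difference $D^n_t:=X^n_t(x)-X^n_t(y)$ — have finite expectation on bounded time intervals. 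As $I$ is unchanged, (CP1) of Proposition~\ref{prop comp} still applies and gives $D^n_t\ge 0$ a.s. Exactly as for \eqref{eds D}, $D^n$ solves that same equation with $h$ replaced by $h\wedge n$ in the large-jump part and with $X(x),X(y)$ replaced by $X^n(x),X^n(y)$ in the $I$-term, the shifted noises $W',M'$ again being a white noise and an independent Poisson random measure with unchanged intensities by the argument of \cite[pp.~15--16]{Berestycki}. Let $Y^n$ solve the defining equation of $Y$ with the same truncation $h\mapsto h\wedge n$; it is a continuous-state branching process with finite-mean Lévy measure, hence non-explosive, and by the monotone approximation of $Y$ one has $Y^n_t\le Y_t(x-y)$ for every $n$, with $Y^n_t\uparrow Y_t(x-y)$ as $n\to\infty$.

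For the comparison at level $n$: since $D^n_s\ge 0$, the one-sided Lipschitz condition (B3) gives
\[
-\big[I(X^n_s(x))-I(X^n_s(y))\big]=-\big[I\big(X^n_s(y)+D^n_s\big)-I(X^n_s(y))\big]\le b\,D^n_s,
\]
so the drift of $D^n$ is pathwise dominated by $(b-\gamma)D^n_s$, i.e.\ by the drift coefficient of $Y^n$ evaluated along $D^n$, while the diffusion and jump coefficients of the two equations coincide. As both $D^n$ and $Y^n$ now have finite first moment, the scheme of \cite[Theorem~2.2]{DawsonLi} applies — the $L^1$-based smoothing of $z\mapsto z^+$ combined with It\^o's formula — and yields $D^n_t\le Y^n_t$ a.s.\ for all $t\ge 0$.

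To remove the truncation, fix $t\ge 0$. Since $X$ is non-explosive, $R:=\sup_{s\le t}X^x_s<\infty$ a.s., and because $\pi|_{(1,\infty)}$ is a finite measure there are a.s.\ only finitely many atoms of $M$ in $[0,t]\times[0,R]\times[1,\infty]$; let $N(\omega)$ exceed the largest of their third coordinates. For $n\ge N(\omega)$ the truncation $h\mapsto h\wedge n$ acts trivially on every atom of $M$ that can trigger a jump of $X^x$ or $X^y$ on $[0,t]$ (these have $u\le R$), so $X(x)$ and $X(y)$ themselves solve the truncated equations on $[0,t]$; by pathwise uniqueness $X^n(x)=X(x)$ and $X^n(y)=X(y)$ there, whence $D^n_s=D^{x,y}_s$ for all $s\le t$. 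Letting $n\to\infty$ in $D^n_t\le Y^n_t\le Y_t(x-y)$ gives $D^{x,y}_t\le Y_t(x-y)$ a.s.\ for each fixed $t$, and by right-continuity of both sides this holds simultaneously for all $t\ge 0$ a.s.

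The crux is the comparison step: one must run the argument of \cite{DawsonLi} for an equation whose drift $-[I(X^n(x))-I(X^n(y))]$ is not a function of $D^n$ alone but merely an adapted process bounded above by $(b-\gamma)D^n_s$ — which is exactly what (B3) supplies — and this is feasible only after the truncation, since the $L^1$ controls underlying that argument are unavailable for the original, possibly infinite-mean process. Checking that $W',M'$ keep their laws and independence, and the pathwise uniqueness and monotone-approximation facts invoked above, is routine given the cited results.
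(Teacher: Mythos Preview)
Your overall plan matches the paper's: truncate the large jumps so that the processes acquire finite first moments, run the Dawson--Li comparison argument, then let $n\to\infty$. The difference is in \emph{how} you truncate. You build new processes $X^n(x), X^n(y)$ solving the SDE with jump sizes capped at $n$, whereas the paper keeps the original $D^{x,y}$ and merely considers it up to the first time $J_n$ it makes a jump of size $\ge n$; their $D^n$ is literally $D^{x,y}$ on $[0,J_n)$, and the shifted noises $W',M'$ (built from $X(y)$) are fixed once and for all.

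Your version introduces a gap you have not addressed. The shifted noises appearing in the equation for your $D^n$ are obtained by translating along $X^n(y)$, not $X(y)$; they have the right law but are \emph{different random objects} from the $W',M'$ driving $Y$. Consequently the $Y^n$ against which you can legitimately compare $D^n$ via Dawson--Li (which requires a common driving noise) is not the same process as the $Y^n$ that sits below $Y$ by monotone approximation, and your assertion ``$Y^n_t\le Y_t(x-y)$ for every $n$'' is unjustified as written. The repair is available from your own final step: on $\{n\ge N(\omega)\}$ you have $X^n(y)=X(y)$ on $[0,t]$, so the two shifts --- and hence the two versions of $Y^n$ --- coincide there, and the chain $D^{x,y}=D^n\le Y^n\le Y$ closes on that event, whose probability tends to $1$. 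But this must be invoked on $\{n\ge N(\omega)\}$ only, not unconditionally. The paper's stopped-process truncation sidesteps the whole issue, since the shifted noises never change and $J_n\to\infty$ follows directly from non-explosion.

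A smaller point in the same spirit: the step ``by pathwise uniqueness $X^n(x)=X(x)$'' also needs a word, since $X(x)$ solves the truncated equation only on the random event $\{n\ge N(\omega)\}$, not almost surely. One clean way through is to note first that $X^n(x)\le X(x)$ by a jump-kernel comparison, so $X^n(x)$ also stays below $R$ on $[0,t]$ and hence sees exactly the same atoms of $M$ as $X(x)$; then on $\{n\ge N(\omega)\}$ the two processes solve the same equation on $[0,t]$ and coincide.
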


\begin{proof}
Fix an integer \( n \geq 1 \), and let \( J_n \) denote the first time at which the process \( D^{x,y} \) undergoes a jump of size at least \( n \), that is,
\[
J_n := \inf\{ t \geq 0 \, : \, D_t^{x,y} - D_{t-}^{x,y} \geq n \}.
\]
Clearly, \( J_n > 0 \) almost surely. Define \( (D_t^n)_{t<J_n} :=(D_t^{x,y})_{t<J_n} \). By construction, the process \( D^n \) satisfies the same stochastic equation as \( D^{x,y} \), except that jumps larger than \( n \) are truncated at size \( n \). More precisely, \( D^n \) solves the following equation for \( t<J_n \):
\begin{multline}
D_t^n = x-y + \sigma \int_0^t \int_0^{D_{s-}^n} W'(\ddr s,\ddr u) -\gamma\int_0^tD_s^n\ddr s - \int_0^t \left[I(X_s(y)) - I(X_s(x))\right] \ddr s \\
+ \int_0^t \int_0^{D_{s-}^n} \int_0^1 h \, \tilde{M}'(\ddr s,\ddr u,\ddr h) + \int_0^t \int_0^{D_{s-}^n} \int_1^\infty (h \wedge n) M'(\ddr s,\ddr u,\ddr h).
\end{multline}
Fix \( t \geq 0 \). We aim to show that \(\forall t<J_n, \  D_t^n \leq Y_t^n \) a.s., where \( Y^n \) denotes the unique strong solution to the stochastic equation:
\begin{multline}
Y^n_t=x-y+\sigma\int_0^t\int_0^{Y^n_{s-}}W'(\ddr s,\ddr u)+(b-\gamma)\int_0^tY^n_s\ddr s\\+\int_0^t\int_0^{Y^n_{s-}}\int_0^1h\tilde{M'}(\ddr s,\ddr u,\ddr h)+\int_0^t\int_0^{Y^n_{s-}}\int_1^{\infty}(h\wedge n)M'(\ddr s,\ddr u,\ddr h).
\end{multline}
We now introduce the process \( \zeta^n \) defined by
\[
\forall t\geq 0, \quad \zeta^n_t := D_t^n - Y_t^n.
\]
Our goal is to establish that, \(\forall t<J_n, \ \zeta_t^n \leq 0 \) a.s.. To this end, we will show that
\[
\mathbb{E}\left[\left(\zeta_t^n\right)^{+} \mathbf{1}_{\{t<J_n\}}\right] = 0,
\]
where for any \( z\in\mathbb{R} \), we set \( z^{+} := z\vee 0 \).

We will need a smooth approximation of the positive part function \( z\mapsto z^{+} \). Let \( (\phi_k)_{k\geq 1} \) be a sequence of \( C^2 \) functions satisfying the following properties: 
\begin{itemize}
\item[•] For all \( z\leq 0 \), \( \phi_k(z) = \phi_k'(z) = 0 \).
\item[•] For all \( z\in\mathbb{R} \), \( \phi_k(z) \to z^{+} \) as \( k\to\infty \).
\item[•] For all $z\in \mathbb{R}$, \( \phi_k'(z)\leq 1 \), \( \phi_k''(z)\leq \frac{2}{kz} \).
\end{itemize}
The construction of such a sequence is recalled in the Appendix.

\vspace{0.3cm}

Define for all $t\geq 0$, \( Z_t := I(X_t(y)) - I(X_t(x)) \). By the one-sided Lipschitz assumption (B3), we have
\begin{equation}
\label{eq B3 preuve comp}
\forall t<J_n, \quad  Z_t \leq b D_t^{x,y} = b D_t^n \quad \text{a.s.}.
\end{equation}

Finally, we note the following important property, which will be used later: on the event \( \{\zeta_{s-}^n\leq 0\} \), the following holds almost surely:
\begin{equation}
\label{useful point}
\phi_k(\zeta^n_{s-})=\phi_k'(\zeta^n_{s-})=\phi_k''(\zeta^n_{s-})=0 \quad \textup{and} \quad \zeta^n_{s-}+\mathbf{1}_{\{z\leq D_{s-}^{n}\}}h-\mathbf{1}_{\{z\leq Y_{s-}^n\}} h \leq 0 \quad \textup{a.s.}.
\end{equation} 

Our aim is to analyze the expectation $\mathbb{E}\left[\phi_k\left(\zeta^n_{t\wedge\tau_m}\right)\mathbf{1}_{\{t<J_n\}}\right]$ and then let $k\rightarrow\infty$ in order to establish the inequality: $$\mathbb{E}\left[\left(\zeta^n_{t\wedge\tau_m}\right)^{+}\mathbf{1}_{\{t<J_n\}}\right]\leq c\int_0^t\mathbb{E}\left[\left(\zeta^n_{s\wedge\tau_m}\right)^{+}\mathbf{1}_{\{s\leq t<J_n\}}\right] \ddr s,$$
for some constant $c>0$. To this end, we apply Itô's formula to the semimartingale $(\zeta^n_t, t\geq 0)$ using the function \( \phi_k \). By applying  \cite[Theorem 93, page 59]{Situ}, we have
\begin{equation}
\label{eq ito on diff}
\begin{split}
\phi_k(\zeta^n_t)&=\phi_k(\zeta_0)+\frac{\sigma^2}{2}\int_0^t\mathbf{1}_{\{\zeta^n_{s-}>0\}}\ddr s\int_0^{\infty}\phi_k''(\zeta^n_{s-})\left(\mathbf{1}_{\{u\leq D_{s-}^{n}\}}-\mathbf{1}_{\{u\leq Y_{s-}^{n}\}}\right)^2\ddr u\\
& \quad + \int_0^t\phi_k'(\zeta^n_{s-})(Z_{s-}-bY^n_{s-}-\gamma\zeta^n_{s-})\mathbf{1}_{\{\zeta^n_{s-}>0\}} \ \ddr s \\
& \quad + \int_0^t\ddr s\int_0^{\infty}\int_1^{\infty}[\phi_k\left(\zeta^n_{s-}+(\mathbf{1}_{\{u\leq D_{s-}^{n}\}}-\mathbf{1}_{\{u\leq Y^n_{s-}\}})(h\wedge n)\right)-\phi_k(\zeta^n_{s-})]\mathbf{1}_{\{\zeta^n_{s-}>0\}} \ \ddr u\pi(\ddr h)\\
& \quad + \int_0^t\ddr s\int_0^{\infty}\int_0^{1}\Big[\phi_k\left(\zeta^n_{s-}+(\mathbf{1}_{\{u\leq D_{s-}^{n}\}}-\mathbf{1}_{\{u\leq Y^n_{s-}\}})(h\wedge n)\right)-\phi_k(\zeta^n_{s-})\\
& \hspace{6 cm} -(\mathbf{1}_{\{u\leq D^n_{s-}\}}-\mathbf{1}_{\{u\leq Y^n_{s-}\}})(h\wedge n)\phi_k'(\zeta^n_{s-})\Big]\mathbf{1}_{\{\zeta^n_{s-}>0\}}\ddr u \pi(\ddr h) \\  
& \quad +  M^n(t),
\end{split}
\end{equation}
where $M^n(t)$ is the local martingale given by:
\begin{align*}
M^n(t)&= \sigma \int_0^t\int_0^{\infty}\phi_k'(\zeta^n_{s-})\left(\mathbf{1}_{\{u\leq D_{s-}^n\}}-\mathbf{1}_{\{u\leq Y_{s-}^n\}}\right)W'(\ddr s,\ddr u) \\
& \quad + \int_0^t\int_0^{\infty}\int_0^{\infty}\left[\phi_k(\zeta^n_{s-}+(\mathbf{1}_{\{u\leq D^n_{s-}\}}-\mathbf{1}_{\{u\leq Y^n_{s-}\}})(h\wedge n))-\phi_k(\zeta^n_{s-})\right]\tilde{M}'(\ddr s,\ddr u,\ddr h).
\end{align*} 
For any $m\geq 0$, set \[
\tau_m := \inf\{ t \geq 0 \,:\, D_t^n \geq m \ \text{or} \ Y_t^n \geq m \}.
\]
As we shall need it later, we show now that \( M^n(\cdot \wedge \tau_m) \) is an \( L^2 \)-martingale. 
By Corollary 3, p.73 in \cite{ProtterBook}, it suffices to show that
\[
\mathbb{E}\Big[ \big[ M^n(\cdot \wedge \tau_m), M^n(\cdot \wedge \tau_m) \big]_t \Big] < \infty.
\]
We decompose \( M^n \) as follows:
\begin{align*}
M^{1,n}(t) &= \sigma \int_0^t \int_0^{\infty} \phi_k'(\zeta^n_{s-}) \left( \mathbf{1}_{\{u \leq D_{s-}^n\}} - \mathbf{1}_{\{u \leq Y_{s-}^n\}} \right) W'(\ddr s,\ddr u), \\
M^{2,n}(t) &= \int_0^t \int_0^{\infty} \int_0^{\infty} \left[ \phi_k\left(\zeta^n_{s-} + (\mathbf{1}_{\{u \leq D_{s-}^n\}} - \mathbf{1}_{\{u \leq Y_{s-}^n\}})(h \wedge n)\right) - \phi_k(\zeta^n_{s-}) \right] \tilde{M}'(\ddr s,\ddr u,\ddr h).
\end{align*}
By independence of \( W' \) and \( M' \), the cross variation \( [M^{1,n}, M^{2,n}] \) is identically zero. Since \( M^{1,n} \) is a continuous local martingale and \( \phi_k' \leq 1 \), one has
\begin{align*}
\left[ M^{1,n}(\cdot \wedge \tau_m), M^{1,n}(\cdot \wedge \tau_m) \right]_t 
&= \sigma^2 \int_0^{t \wedge \tau_m} \int_0^{\infty} \phi_k'^2(\zeta^n_{s-}) \left( \mathbf{1}_{\{u \leq D_{s-}^n\}} - \mathbf{1}_{\{u \leq Y_{s-}^n\}} \right)^2 \ddr u \ddr s \\
&\leq \sigma^2 \int_0^{t \wedge \tau_m} |\zeta^n_s| \, \ddr s \leq m \sigma^2 t.
\end{align*}

The process \( M^{2,n} \) is purely discontinuous (in the sense of Protter's definition of purely jump martingales in \cite[page 193]{ProtterBook}), so its quadratic variation is given by the sum of the squares of its jumps:
\begin{align*}
\left[ M^{2,n}(\cdot \wedge \tau_m), M^{2,n}(\cdot \wedge \tau_m) \right]_t 
= \int_0^{t \wedge \tau_m} \int_0^{\infty} \int_0^{\infty} 
\left( \phi_k(\zeta^n_{s-} + \Delta_{s,u,h}) - \phi_k(\zeta^n_{s-}) \right)^2 M'(\ddr s,\ddr u,\ddr h),
\end{align*}
where \( \Delta_{s,z,h} := (\mathbf{1}_{\{u \leq D_{s-}^n\}} - \mathbf{1}_{\{u \leq Y_{s-}^n\}})(h \wedge n) \). The expectation formula for integrals of positive predictable processes with respect to a Poisson random measure provides:
\begin{align*}
\mathbb{E}&\Big[ \big[ M^{2,n}(\cdot \wedge \tau_m), M^{2,n}(\cdot \wedge \tau_m) \big]_t \Big]\\
&= \int_0^t \int_0^{\infty} \int_0^{\infty} \mathbb{E}\left[ 
\left( \phi_k(\zeta^n_{s-} + \Delta_{s,u,h}) - \phi_k(\zeta^n_{s-}) \right)^2 
\mathbf{1}_{\{s < \tau_m\}} \right] \ddr s\, \ddr u\, \pi(\ddr h).
\end{align*}
By the mean value theorem and \( \phi_k' \leq 1 \), we have for all \( s < \tau_m \):
\begin{align*}
\left( \phi_k(\zeta^n_{s-} + \Delta_{s,z,h}) - \phi_k(\zeta^n_{s-}) \right)^2 
\leq \left( \Delta_{s,z,h} \right)^2 
= \left( \mathbf{1}_{\{z \leq D_{s-}^n\}} - \mathbf{1}_{\{z \leq Y_{s-}^n\}} \right)^2 (h \wedge n)^2.
\end{align*}
By Fubini–Tonelli’s theorem and \( \sigma \)-finiteness of \( \pi \), we conclude:
\begin{align*}
\mathbb{E}\Big[ \big[ M^{2,n}(\cdot \wedge \tau_m), M^{2,n}(\cdot \wedge \tau_m) \big]_t \Big] 
\leq mt \left( n^2 \int_1^{\infty} \pi(\ddr h) + \int_0^1 h^2 \pi(\ddr h) \right).
\end{align*}
Combining both parts, we find
$\mathbb{E}\Big[ \big[ M^n(\cdot \wedge \tau_m), M^n(\cdot \wedge \tau_m) \big]_t \Big] < \infty$,
and therefore \( M^n(\cdot \wedge \tau_m) \) is an \( L^2 \)-martingale.
\\

We now analyze the limit of  
\[
\mathbb{E}\left[\phi_k\left(\zeta^n_{t\wedge\tau_m}\right)\mathbf{1}_{\{t<J_n\}}\right] \quad \text{as } k \to \infty,
\]  
we examine each term in the Itô formula expansion \eqref{eq ito on diff}.

\begin{itemize}
    \item \textit{Term involving \( \phi_k'' \)} :  
    For all \( z \geq 0 \), we have \( \phi_k''(z) \leq \frac{2}{kz} \), so that:
    \begin{align*}
    &\mathbb{E}\left[\int_0^{t\wedge \tau_m} \mathbf{1}_{\{\zeta^n_{s-}>0\}} \, \ddr s \int_0^{\infty} \phi_k''(\zeta^n_{s-}) \left( \mathbf{1}_{\{u \leq D_{s-}^n\}} - \mathbf{1}_{\{u \leq Y_{s-}^n\}} \right)^2 \ddr u \right] \\
    &\quad \leq \frac{2}{k} \mathbb{E}\left[\int_0^{t\wedge \tau_m} \mathbf{1}_{\{\zeta^n_{s-}>0\}} \frac{1}{\zeta^n_{s-}} \zeta^n_{s-} \, \ddr s \right] = \frac{2}{k} \mathbb{E}[t \wedge \tau_m] \xrightarrow[k \to \infty]{} 0.
    \end{align*}

    \item \textit{Drift term}:  
    Since \(\forall s < J_n, \  Z_s \leq b D_s^n \) a.s. (see \eqref{eq B3 preuve comp}), we deduce that:
    \[\forall s < J_n, \quad
    \phi_k'(\zeta^n_{s-})(Z_{s-} - bY^n_{s-} - \gamma \zeta^n_{s-}) \leq (b - \gamma) \zeta^n_{s-} \quad \text{a.s.}.
    \]

    \item \textit{Large jump term (first-order estimate)}:  
    By the mean value theorem, on \( \{\zeta^n_{s-}>0\} \) and using \( \phi_k'(z) \leq 1 \), we have:
    \[
    \phi_k(\zeta^n_{s-} + \Delta_{s,z,h}) - \phi_k(\zeta^n_{s-}) \leq \Delta_{s,z,h} \quad \textup{a.s.}.
    \]

    \item \textit{Small jump term (second-order estimate)}:  
    Again by the mean value theorem and the bound \( \phi_k''(z) \leq \frac{2}{kz} \), we obtain:
    \[
    \phi_k(\zeta^n_{s-} + \Delta_{s,z,h}) - \phi_k(\zeta^n_{s-}) - \Delta_{s,z,h} \phi_k'(\zeta^n_{s-}) \leq \frac{1}{k\zeta^n_{s-}} \Delta_{s,z,h}^2.
    \]
\end{itemize}

Taking expectations in \eqref{eq ito on diff} and applying the above estimates yields:
\begin{align*}
\mathbb{E}\left[\phi_k\left(\zeta^n_{t\wedge \tau_m}\right)\mathbf{1}_{\{t<J_n\}}\right]
&\leq \frac{\sigma^2}{2} \mathbb{E}\left[\int_0^{t\wedge \tau_m} \mathbf{1}_{\{\zeta^n_{s-}>0\}} \, \ddr s \int_0^{\infty} \phi_k''(\zeta^n_{s-}) \left( \mathbf{1}_{\{u \leq D_{s-}^n\}} - \mathbf{1}_{\{u \leq Y_{s-}^n\}} \right)^2 \ddr u \right] \\
&\quad + (b - \gamma)\mathbb{E}\left[\mathbf{1}_{\{t<J_n\}}\int_0^{t\wedge \tau_m} \zeta^n_{s-} \mathbf{1}_{\{\zeta^n_{s-}>0\}} \ddr s\right] \\
&\quad + n\left(\int_1^\infty \pi(\ddr h)\right)\mathbb{E}\left[\mathbf{1}_{\{t<J_n\}}\int_0^{t\wedge \tau_m} \zeta^n_{s-} \mathbf{1}_{\{\zeta^n_{s-}>0\}} \ddr s\right] \\
&\quad + \frac{1}{k} \left(\int_0^1 h^2 \pi(\ddr h)\right) \mathbb{E}\left[\int_0^{t\wedge \tau_m} \mathbf{1}_{\{\zeta^n_{s-}>0\}} \ddr s\right],
\end{align*}
where the first and last terms vanish as \( k \to \infty \).

Letting \( k \to \infty \) and applying the monotone convergence theorem, we deduce:
\[
\mathbb{E}\left[\left(\zeta^n_{t\wedge \tau_m}\right)^+ \mathbf{1}_{\{t<J_n\}}\right] \leq c \int_0^t \mathbb{E}\left[\left(\zeta^n_{s\wedge \tau_m}\right)^+ \mathbf{1}_{\{s \leq t < J_n\}}\right] \ddr s,
\]
where \( c := b - \gamma + n\int_1^\infty \pi(\ddr h) > 0 \) for \( n \) large enough, since $\bar{\pi}(1)>0$ by assumption. An application of Gr\"onwall’s lemma yields:
\[
\mathbb{E}\left[\left(\zeta^n_{t\wedge \tau_m}\right)^+ \mathbf{1}_{\{t<J_n\}}\right] = 0,
\]
so that
$\left(\zeta^n_{t\wedge \tau_m}\right)^+ = 0 \ \text{ on } \{t < J_n\} \quad  \text{a.s.}.$
which in turn ensures that \( D_t^{x,y} = D_t^n \leq Y_t^n \) for all \( t\in [0, J_n) \) almost surely.  

Since furthermore the process \( X \) does not explode, we know that
\(
J_n \rightarrow \infty
\) a.s. when $n \rightarrow\infty$.
Hence, the inequality \( D_t^{x,y} \leq Y_t^n \) holds a.s. for all \( t \geq 0 \). Finally, as \( Y_t^n \uparrow Y_t \) almost surely as \( n \to \infty \), we obtain the desired result. 
\end{proof}

\subsection{The stochastic equation satisfied by $(X_t^{\infty}, \ t> 0)$}
\label{section:edsinf}

\paragraph*{}
This section is dedicated to the proofs of Proposition \ref{prop cv unif} and Theorem \ref{th descente eds}. In all this section we assume that $I$ satisfies the one-sided Lipschitz condition (B3), i.e. that:  \[ \exists b>0, \ \forall y,z\geq 0, \quad I(y+z)-I(y)\geq -bz, \] and that $\tau_a^{\infty,-}<\infty$ a.s. for some $a\in (0,\infty)$.
We start by establishing Proposition \ref{prop cv unif}, this will be a consequence of Proposition \ref{prop feller} and Lemma \ref{lemma comparison D}.
\\\\
\textbf{Proof of Proposition \ref{prop cv unif}.} Since $y_n, y \in [0,\infty)$ for all $n \in \mathbb{N}$, we have 
\[
\lvert e^{-y_n} - e^{-y} \rvert \leq \lvert y_n - y \rvert,
\] 
and the local uniform convergence follows directly from Proposition~\ref{prop feller}.  

For the second part, assume that $I(0)=0$ and use the one-sided Lipschitz condition (B3). For any $z \geq 0$,
\[
I(z) \geq I(0) - b z.
\]
By the comparison property (CP1), this implies that for all $n \in \mathbb{N}$ and $t \geq 0$, 
\[
X_t^{x_n} \leq Y_t^n \qquad \text{a.s.,}
\]
where $Y^n$ denotes a CB started from $x_n$ with branching mechanism $\tilde{\Psi}(x) := \Psi(x) + b x$.  

From Lemma~1.5 in \cite[page 8]{DuquesneLabbe}, we know that for all $t \geq 0$, 
\[
Y_t^{x_n} \xrightarrow[n \to \infty]{} 0 \quad \text{in probability}.
\] 
Consequently, $X_t^{x_n} \to 0$ in probability as $n \to \infty$. Since the sequence $\left(X_t^{x_n}\right)_{n \geq 0}$ is non-increasing a.s., the convergence in fact holds almost surely, which completes the proof.\qed

\paragraph*{}
To establish that $X^{\infty}$ is the unique strong solution of a stochastic equation, we show that the processes
\[
\big(X^{\infty}_t, \ t\geq \tau_n^{\infty,-}\big)
\quad \text{and} \quad 
 \big(\tilde{X}^n_{t-\tau_n^{-}(\infty)}(n), \ t\geq \tau_n^{\infty,-}\big) 
\]  
have the same distribution, where $\tilde{X}^n$ denotes the solution of a shifted stochastic equation for fixed $n \in (0,\infty)$.
This identity in law explicitly characterizes the stochastic equation satisfied by $X^{\infty}$.
Moreover, the regularity property of Proposition~\ref{prop feller} also applies to the shifted process $\tilde{X}^n$.
The corresponding result, stated in Lemma~\ref{lemma feller tilde}, provides the key tool to prove the convergence of 
\[
\big(X^x_t, \ s\leq t\big)  \text{ towards }\big(X^{\infty}_t, \ s\leq t\big) \text{ in } (D,\rho_{\infty}) \text{ as } x \to \infty.\]

\paragraph*{}

The following proposition introduces the process called $\tilde{X}^y$ as the unique strong solution of the shifted stochastic equation \eqref{eds shift}. Since by hypothesis $\tau_a^{\infty,-}<\infty$ a.s., we have $\tau_y^{\infty,-}<\infty$ a.s. for any $y\in (a,\infty)$.

\begin{proposition} 
\label{prop existence tilde}
For $y \in (a,\infty)$ and $\xi$ a positive random variable, there exists a unique càdlàg strong Markov process, which we denote by $\left(\tilde{X}_t^{y}(\xi), \ t\geq 0\right)$, solution of the stochastic equation:
\begin{multline}
\label{eds shift}
X_t=\xi+ \sigma  \int_0^t\sqrt{X_s}\ddr B_{s+\tau_y^{\infty,-}} -\gamma \int_0^tX_s\ddr s +\int_0^t\int_0^{X_{s-}}\int_{(0,1]}h\tilde{M}(\ddr s + \tau_y^{\infty,-},\ddr u,\ddr h) \\ +\int_0^t\int_0^{X_{s-}}\int_{(1,\infty]}h M(\ddr s+\tau_y^{\infty,-},\ddr u,\ddr h)-\int_0^tI(X_s)\ddr s 
\end{multline}
until the first exit time of $(0,\infty)$.
\end{proposition}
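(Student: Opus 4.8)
The plan is to recognise \eqref{eds shift} as the original stochastic equation \eqref{eds}, only with time-shifted driving noise and a new initial value, and then to quote Propositions~\ref{Prop existence} and~\ref{lemma generator}. The substance of the argument lies entirely in justifying the time-shift: that it is carried out at an honest stopping time, and that the shifted Brownian motion and Poisson random measure retain their laws and their independence relative to the shifted filtration.

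First I would check that $\tau:=\tau_y^{\infty,-}$ is an a.s. finite $(\mathcal{F}_t)$-stopping time. Finiteness is immediate: since $y>a$ one has $\{X^{\infty}_t\leq a\}\subseteq\{X^{\infty}_t\leq y\}$ for every $t$, whence $\tau_y^{\infty,-}\leq\tau_a^{\infty,-}<\infty$ a.s. by the standing hypothesis of this section. For the stopping-time property, note that for each fixed $x\in(0,\infty)$ the time $\tau_y^{x,-}=\inf\{t\geq 0:\ X^x_t\in[0,y]\}$ is the début of the progressively measurable set $\{(t,\omega):X^x_t(\omega)\in[0,y]\}$, hence an $(\mathcal{F}_t)$-stopping time by the début theorem under the usual hypotheses; and by Lemma~\ref{lemma stopping times}(ii) one has $\tau_y^{x,-}\uparrow\tau_y^{\infty,-}$ a.s. as $x\to\infty$, so $\tau_y^{\infty,-}$, being an increasing limit of stopping times, is again a stopping time.

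Next I would pass to the shifted filtration $\mathcal{G}_s:=\mathcal{F}_{\tau+s}$ (which still satisfies the usual hypotheses) and to the shifted objects $B^{\tau}_s:=B_{\tau+s}-B_{\tau}$ and $M^{\tau}(\ddr s,\ddr u,\ddr h):=M(\ddr s+\tau,\ddr u,\ddr h)$. By the strong Markov property of Brownian motion and of Poisson random measures (equivalently, the restart property at the stopping time $\tau$ of the Lévy/Poisson point process driving \eqref{eds}), $B^{\tau}$ is a $(\mathcal{G}_s)$-Brownian motion, $M^{\tau}$ is a $(\mathcal{G}_s)$-Poisson random measure with the same intensity $\ddr s\otimes\ddr u\otimes\pi(\ddr h)$, and the two are independent; since only the increments of $B$ enter the stochastic integrals, $\ddr B_{s+\tau}$ may be read as $\ddr B^{\tau}_s$. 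Then \eqref{eds shift} becomes, on $(\Omega,\mathcal{F},(\mathcal{G}_s)_{s\geq 0},\mathbb{P})$, a verbatim copy of \eqref{eds} with the same $(\sigma,\gamma,\pi,I)$, the driving noise $(B^{\tau},M^{\tau})$, and the $\mathcal{G}_0$-measurable initial value $\xi$ — in the applications $\xi$ is deterministic or $\mathcal{F}_{\tau}$-measurable, and in general one first enlarges the filtration by $\sigma(\xi)$ at time $0$, which preserves everything above. Existence and uniqueness of a càdlàg strong solution up to the first exit from $(0,\infty)$ then follow from Proposition~\ref{Prop existence}, and the strong Markov property from Proposition~\ref{lemma generator}. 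I expect the only genuinely delicate point to be the verification that $\tau_y^{\infty,-}$ is a bona fide stopping time (together with the bookkeeping of the shifted filtration and the measurability of $\xi$); the rest is a direct transcription of results already established for \eqref{eds}.
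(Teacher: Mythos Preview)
Your proposal is correct and follows essentially the same approach as the paper: one observes that the time-shifted Poisson random measure and Brownian motion retain their laws (and independence), so that \eqref{eds shift} is an instance of \eqref{eds} and Proposition~\ref{Prop existence} applies. The paper's proof is in fact terser than yours --- it simply asserts the restart property for $M'$ and $B'$ (referring to \cite{Berestycki}) and concludes --- while you additionally supply the verification that $\tau_y^{\infty,-}$ is an a.s.\ finite stopping time and the filtration/measurability bookkeeping, all of which is sound.
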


\begin{proof}
The shifted random measure
$M'(\ddr s,\ddr u,\ddr h) := M(\ddr s + \tau_y^{\infty,-}, \ddr u, \ddr h)$
is a Poisson random measure on \( \mathbb{R}_+ \times \mathbb{R}_+ \times \mathbb{R}_+ \) with the same intensity measure as \( M \), namely $\ddr s\ddr u \pi(\ddr h)$. Similarly, the shifted process
$\left(B'_s\right)_{s \geq 0} := \left(B_{s + \tau_y^{\infty,-}}\right)_{s \geq 0}$
is a Brownian motion with the same distribution as \( B \), see e.g. \cite[Page 16]{Berestycki}. Therefore, the stochastic equation \eqref{eds shift} falls into the same class as the stochastic equation \eqref{eds}, and we deduce the existence and uniqueness of \( \tilde{X}^y(\xi) \) from Proposition~\ref{Prop existence}.\end{proof}

\paragraph*{} For any positive random variable $\xi$, and $a,b \in [0,\infty]$, we set:
\begin{equation*}
\tilde{\tau}_a^{y,-}(\xi)=\inf\{t\geq 0, \ \tilde{X}_t^{y}(\xi)\leq a\} \quad , \quad \tilde{\tau}_b^{y,+}(\xi)=\inf\{t\geq 0, \ \tilde{X}_t^{y}(\xi)\geq b\}
\end{equation*}

\paragraph{} 
We now present two convergence lemmas for the family of processes \( \tilde{X}^y \).  
Lemma~\ref{lemma feller tilde} establishes a regularity property with respect to the initial condition, in the same spirit as Proposition~\ref{prop feller}, but for the shifted process \( \tilde{X}^y \).  
Lemma~\ref{lemma cv uniforme vers tilde} then provides a central convergence result.

\begin{lemma}
\label{lemma feller tilde}
Assume that $I$ satisfies the one-sided Lipschitz condition B3. Fix $y\in (a,\infty)$, $\xi_n, \xi$ positive random variables such that $\xi_n \rightarrow \xi$ a.s. when $n\in \mathbb{N}$ and $n \rightarrow \infty$.
Then, for any $t\geq 0$ :
\begin{equation}
\underset{n \uparrow \infty \atop n \in \mathbb{N}}{\lim} \ \underset{s\leq t}{\sup} \ \vert \tilde{X}_s^{y}(\xi_n)-\tilde{X}_s^{y}(\xi)\vert = 0 \ \ \textup{a.s.}.
\end{equation}
\end{lemma}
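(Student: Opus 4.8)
\textbf{Proof plan for Lemma~\ref{lemma feller tilde}.}
The plan is to reduce this statement to Proposition~\ref{prop feller} applied to the shifted process $\tilde X^y$, which solves the same type of stochastic equation \eqref{eds shift} as the original CBDI, only with the shifted noise sources $B'_s=B_{s+\tau_y^{\infty,-}}$ and $M'(\ddr s,\ddr u,\ddr h)=M(\ddr s+\tau_y^{\infty,-},\ddr u,\ddr h)$. By the strong Markov property applied at the stopping time $\tau_y^{\infty,-}$ (which is a.s. finite by hypothesis), $B'$ is again a Brownian motion and $M'$ a Poisson random measure with the same intensity, and they remain independent; this is exactly what was invoked in the proof of Proposition~\ref{prop existence tilde}. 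Hence $(\tilde X^y_t(x),\,t\ge 0,\,x\in[0,\infty))$ is, in law and pathwise after this shift, a family of CBDI$(\Psi,I)$ processes built on a common white-noise/PRM space, and every ingredient used to prove Proposition~\ref{prop feller} is available verbatim.

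Concretely, first I would fix $y\in(a,\infty)$ and realize the family $\{\tilde X^y(x)\}_{x\ge 0}$ on a single probability space via the white-noise construction of Dawson--Li, exactly as in the proof of Proposition~\ref{prop feller}: replace the Brownian term by $\sigma\int_0^t\int_0^{X_{s-}}W'(\ddr s,\ddr u)$ with $W'$ the shifted white noise, so that $\tilde X^y(x_n)$ and $\tilde X^y(x)$ are driven by the same $W'$ and $M'$. Then for $x>x'$ set $\tilde D_t=\tilde X^y_t(x)-\tilde X^y_t(x')$, write the stochastic equation for $\tilde D$ (the jump and diffusion terms localize to the strip between $\tilde X^y_{s-}(x')$ and $\tilde X^y_{s-}(x)$, and the drift contributes $-[I(\tilde X^y_s(x))-I(\tilde X^y_s(x'))]$), and invoke the one-sided Lipschitz bound (B3) together with Lemma~\ref{lemma comparison D} to obtain $\tilde D_t\le \tilde Y_t(x-x')$ a.s., where $\tilde Y(x-x')$ is a CB$(\hat\Psi)$ with $\hat\Psi(\cdot)=\Psi(\cdot)-b\,\cdot$ started from $x-x'$. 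Finally I would apply Lemma~1.5 of Duquesne--Labbé to get $\sup_{s\le t}\tilde Y_s(r)\to 0$ in probability as $r\downarrow 0$, upgrade this to almost sure convergence using that $r\mapsto\sup_{s\le t}\tilde Y_s(r)$ is nondecreasing, and sandwich: $0\le \sup_{s\le t}|\tilde X^y_s(\xi_n)-\tilde X^y_s(\xi)|\le \sup_{s\le t}\tilde Y_s(|\xi_n-\xi|)\to 0$ a.s. For a general sequence $\xi_n\to\xi$ (not monotone) one splits using $\xi_n^+:=\xi_n\vee\xi$ and $\xi_n^-:=\xi_n\wedge\xi$ and applies comparison (CP1) on both sides, as in Proposition~\ref{prop feller}; one also needs the elementary fact that $\xi_n\to\xi$ a.s.\ implies $|\xi_n-\xi|\to 0$ a.s., so the bounding radius tends to $0$.

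The only genuine subtlety — and the step I expect to be the main obstacle — is the bookkeeping around the random time shift: one must check that conditioning on $\mathcal{F}_{\tau_y^{\infty,-}}$ the shifted noise $(W',M')$ is independent of the (now deterministic, given this $\sigma$-field) parameters entering the equation, so that the pathwise comparison arguments of Lemma~\ref{lemma comparison D}, which were stated for a fixed deterministic initial datum on a fixed noise space, transfer to $\tilde X^y$ with $\mathcal{F}_{\tau_y^{\infty,-}}$-measurable initial data $\xi_n,\xi$. This is handled by working conditionally on $\mathcal{F}_{\tau_y^{\infty,-}}$: on this event the law of $(\tilde X^y(\xi_n),\tilde X^y(\xi))$ is that of a pair of CBDIs with deterministic starting points, so Lemma~\ref{lemma comparison D} and Proposition~\ref{prop feller} apply pointwise, and one then integrates out. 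Everything else is a direct transcription of the proof of Proposition~\ref{prop feller}.
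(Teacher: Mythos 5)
Your proposal is correct and follows exactly the route the paper takes: the paper's own proof of this lemma is a one-line reference to the proof of Proposition~\ref{prop feller}, i.e.\ the white-noise realization of the shifted family on a common probability space, the domination $\tilde D_t\le \tilde Y_t$ by a CB$(\hat\Psi)$ via Lemma~\ref{lemma comparison D}, and the Duquesne--Labb\'e estimate upgraded to almost sure convergence by monotonicity. Your additional care about conditioning on $\mathcal{F}_{\tau_y^{\infty,-}}$ to handle the random initial data and the random time shift is a genuine refinement of a point the paper leaves implicit, and it is handled correctly.
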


\begin{proof}
The proof uses the same arguments as the proof of Proposition \ref{prop feller}. 
\end{proof}

\begin{lemma}
\label{lemma cv uniforme vers tilde}
Fix $t \geq 0$ and $y\in(a,\infty)$.
\begin{equation}
\underset{k\rightarrow \infty\atop k\in \mathbb{N}}\lim \ \underset{s\leq t}{\sup} \ \left( \tilde{X}_s^{y}(y) - X^k_{s+\tau_y^{\infty,-}} \right) =0 \ \ \ \textup{a.s.}
\end{equation}

\end{lemma}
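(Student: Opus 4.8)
The plan is to fix $y\in(a,\infty)$ and $t\ge 0$, and compare the two processes pathwise for $k>y$. Both $\tilde{X}^y(y)$ and $(X^k_{s+\tau_y^{\infty,-}})_{s\ge0}$ are solutions of stochastic equations driven by the same noise, namely the time-shifted white noise $W(\ddr s+\tau_y^{\infty,-},\ddr u)$ (equivalently $B_{s+\tau_y^{\infty,-}}$) and PRM $M(\ddr s+\tau_y^{\infty,-},\ddr u,\ddr h)$, which by the strong Markov property at the stopping time $\tau_y^{\infty,-}$ are again a white noise and a PRM with the original intensities, independent of $\mathcal{F}_{\tau_y^{\infty,-}}$. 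The difference is only in the initial condition: $\tilde{X}^y_0(y)=y$, whereas $X^k_{\tau_y^{\infty,-}}$ is the value at time $\tau_y^{\infty,-}$ of the CBDI started from $k$. Since $X^k_t\uparrow X^\infty_t$ for all $t$ and $\tau_k^{x,-}\uparrow\tau_k^{\infty,-}$, and since $X^\infty$ is right-continuous at $\tau_y^{\infty,-}$ with $X^\infty_{\tau_y^{\infty,-}}\le y$ by definition of $\tau_y^{\infty,-}$, one expects $X^k_{\tau_y^{\infty,-}}\to X^\infty_{\tau_y^{\infty,-}}\le y$ a.s. as $k\to\infty$.

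First I would make precise the convergence of the initial data. Using Lemma~\ref{lemma stopping times}(ii), $\tau_y^{k,-}\uparrow\tau_y^{\infty,-}$ a.s.\ as $k\to\infty$; combined with the monotonicity $X^k\le X^{k+1}\le\dots\le X^\infty$ and the càdlàg property of $X^\infty$, one gets $X^k_{\tau_y^{\infty,-}}\uparrow X^\infty_{\tau_y^{\infty,-}}\le y$ a.s. Next I would invoke the regularity result Lemma~\ref{lemma feller tilde} for the shifted family: the map $\xi\mapsto\big(\tilde{X}^y_s(\xi),\,s\le t\big)$ is a.s.\ continuous for the locally uniform topology along any convergent sequence of initial values. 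Thus, writing $\xi_k:=X^k_{\tau_y^{\infty,-}}\uparrow\xi_\infty:=X^\infty_{\tau_y^{\infty,-}}$, we obtain $\sup_{s\le t}|\tilde{X}^y_s(\xi_k)-\tilde{X}^y_s(\xi_\infty)|\to 0$ a.s. The remaining step is to identify $\big(X^k_{s+\tau_y^{\infty,-}},\,s\le t\big)$ with $\big(\tilde{X}^y_s(\xi_k),\,s\le t\big)$: by the strong Markov property (Proposition~\ref{lemma generator}) applied at $\tau_y^{\infty,-}$ — noting that $\tau_y^{\infty,-}$ is a stopping time and, on $\{\tau_y^{\infty,-}<\infty\}$, the post-$\tau_y^{\infty,-}$ process $(X^k_{s+\tau_y^{\infty,-}})_s$ solves \eqref{eds shift} with $\xi=X^k_{\tau_y^{\infty,-}}$ — strong uniqueness for \eqref{eds shift} (Proposition~\ref{prop existence tilde}) gives the pathwise identity $X^k_{s+\tau_y^{\infty,-}}=\tilde{X}^y_s(\xi_k)$ for all $s\ge0$, a.s. Putting these together,
\[
\sup_{s\le t}\big(\tilde{X}^y_s(y)-X^k_{s+\tau_y^{\infty,-}}\big)
=\sup_{s\le t}\big(\tilde{X}^y_s(y)-\tilde{X}^y_s(\xi_k)\big)
\le \sup_{s\le t}\big|\tilde{X}^y_s(y)-\tilde{X}^y_s(\xi_\infty)\big|+\sup_{s\le t}\big|\tilde{X}^y_s(\xi_\infty)-\tilde{X}^y_s(\xi_k)\big|.
\]
The second term tends to $0$ a.s.\ by Lemma~\ref{lemma feller tilde}, and for the first term I would use the comparison property (CP1): since $\xi_\infty\le y$, we have $\tilde{X}^y_s(\xi_\infty)\le\tilde{X}^y_s(y)$, so actually $\sup_{s\le t}(\tilde{X}^y_s(y)-X^k_{s+\tau_y^{\infty,-}})$ is nonnegative and bounded above by the vanishing quantity $\sup_{s\le t}|\tilde{X}^y_s(y)-\tilde{X}^y_s(\xi_k)|$, whence the claim. (Here one also uses monotonicity of $k\mapsto X^k$, which makes the sequence $\sup_{s\le t}(\tilde{X}^y_s(y)-X^k_{s+\tau_y^{\infty,-}})$ nonincreasing, so the limit in probability coming from Lemma~\ref{lemma feller tilde} upgrades to an a.s.\ limit.)

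The main obstacle I anticipate is the careful handling of the strong Markov property at the random time $\tau_y^{\infty,-}$ so as to legitimately transfer the driving noise: one must check that $\tau_y^{\infty,-}$ is a genuine stopping time of the filtration and that, conditionally on $\mathcal{F}_{\tau_y^{\infty,-}}$, the shifted white noise and PRM retain their intensities and independence — this is exactly what is used in Proposition~\ref{prop existence tilde}, and I would lean on that. A secondary subtlety is that $X^\infty$ need not be continuous at $\tau_y^{\infty,-}$, so the inequality $X^\infty_{\tau_y^{\infty,-}}\le y$ (rather than $=y$) is the correct statement and is precisely why the comparison (CP1) with initial value $y$ is available; everything downstream only needs $\xi_k\le y$ and $\xi_k\uparrow\xi_\infty$.
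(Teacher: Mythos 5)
Your overall architecture matches the paper's: identify $\bigl(X^k_{\tau_y^{\infty,-}+s}\bigr)_{s}$ with $\bigl(\tilde{X}^y_s(\xi_k)\bigr)_s$ for $\xi_k:=X^k_{\tau_y^{\infty,-}}$ via strong uniqueness for \eqref{eds shift}, then conclude by the continuity in the initial condition from Lemma~\ref{lemma feller tilde}. However, there is a genuine gap at the heart of the argument: you only establish $\xi_k\uparrow\xi_\infty:=X^\infty_{\tau_y^{\infty,-}}\le y$, and your final estimate
\[
\sup_{s\le t}\bigl(\tilde{X}^y_s(y)-\tilde{X}^y_s(\xi_k)\bigr)\;\le\;\sup_{s\le t}\bigl|\tilde{X}^y_s(y)-\tilde{X}^y_s(\xi_\infty)\bigr|+\sup_{s\le t}\bigl|\tilde{X}^y_s(\xi_\infty)-\tilde{X}^y_s(\xi_k)\bigr|
\]
controls only the second term. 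The first term is a fixed nonnegative random quantity that vanishes only if $\xi_\infty=y$; the comparison property (CP1) gives its sign, not its nullity, and declaring the right-hand side ``the vanishing quantity $\sup_{s\le t}|\tilde{X}^y_s(y)-\tilde{X}^y_s(\xi_k)|$'' is circular, since that is exactly what the lemma asks you to prove. You even flag yourself that $X^\infty$ is not yet known to be continuous (or without negative jumps) at $\tau_y^{\infty,-}$ --- that fact is a downstream consequence of Theorem~\ref{th descente eds} --- so $\xi_\infty=y$ cannot be asserted for free, and ``everything downstream only needs $\xi_k\le y$'' is not correct.

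The missing step, which is the real content of the paper's proof, is to show directly that $X^k_{\tau_y^{\infty,-}}\to y$ \emph{exactly}. The paper does this by combining three facts: (a) since $X^k$ has no negative jumps, the post-$\tau_y^{k,-}$ process has the law of $\tilde{X}^y(y)$ started at exactly $y$; (b) $\tilde{X}^y_t(y)\to y$ a.s.\ as $t\downarrow 0$; (c) $\tau_y^{k,-}\uparrow\tau_y^{\infty,-}$, so the elapsed time $\tau_y^{\infty,-}-\tau_y^{k,-}$ is eventually smaller than any $\delta$ with high probability. An $\epsilon$--$\delta$ argument then gives $X^k_{\tau_y^{\infty,-}}\to y$ in probability, and monotonicity of $k\mapsto X^k_{\tau_y^{\infty,-}}$ upgrades this to almost sure convergence. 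Once $\xi_k\to y$ a.s.\ is in hand, a single application of Lemma~\ref{lemma feller tilde} with limit point $y$ (no intermediate $\xi_\infty$ needed) finishes the proof. You should insert this step; without it the argument does not close.
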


\begin{proof}
By the strong Markov property, we have the distributional identity
\[
\left(X^k_{\tau_y^{-}(k)+t}, \ t \geq 0\right) \overset{d}{=} \left(\tilde{X}_t^y(y), \ t \geq 0\right).
\]
Since \( \tilde{X}_t^y(y) \to y \) almost surely as \( t \to 0 \), and \( \tau_y^{k,-} \uparrow \tau_y^{\infty,-} \) almost surely as \( k \to \infty \), it follows that for any \( \epsilon > 0 \), there exist \( \delta > 0 \) and \( k_0 \in \mathbb{N} \) such that:
\begin{align*}
\mathbb{P}\left(\sup_{0 \leq t \leq \delta} \left| \tilde{X}_t^y(y) - y \right| > \epsilon\right) \leq \frac{\epsilon}{2},\qquad \forall k \geq k_0, \quad \mathbb{P}\left(\tau_y^{-}(\infty) > \tau_y^{-}(k) + \delta\right) \leq \frac{\epsilon}{2}.
\end{align*}

Then, for all \( k \geq k_0 \), we have:
\begin{align*}
\mathbb{P}\left(\left| X^k_{\tau_y^{\infty,-}} - y \right| > \epsilon \right) 
&\leq \mathbb{P}\left(\left| X^k_{\tau_y^{\infty,-}} - y \right| > \epsilon, \ \tau_y^{k,-} \leq \tau_y^{\infty,-} \leq \tau_y^{k,-} + \delta \right) + \frac{\epsilon}{2} \\
&\leq \mathbb{P}\left( \sup_{0 \leq t \leq \delta} \left| X^k_{\tau_y^{k,-} + t} - y \right| > \epsilon \right) + \frac{\epsilon}{2} \\
&= \mathbb{P}\left( \sup_{0 \leq t \leq \delta} \left| \tilde{X}_t^y(y) - y \right| > \epsilon \right) + \frac{\epsilon}{2} \\
&\leq \epsilon.
\end{align*}

This shows that \( X^k_{\tau_y^{\infty,-}} \to y \) in probability as \( k \to \infty \). Moreover, since the mapping \(k\mapsto X^k_{\tau_y^{\infty,-}}\) is almost surely non-decreasing, convergence actually holds almost surely:
\[
X^k_{\tau_y^{\infty,-}} \longrightarrow y \quad \text{a.s. as } k \to \infty.
\]

By uniqueness of solutions to the stochastic equation \eqref{eds shift}, we also have:
\[
X^k_{\tau_y^{\infty,-} + t} = \tilde{X}_t^y\left(X_{\tau_y^{\infty,-}}^{k}\right) \quad \text{a.s.}
\]
Hence, applying Lemma~\ref{lemma feller tilde}, we obtain:
\[
\sup_{s \leq t} \left| X^k_{\tau_y^{\infty,-}+s} - \tilde{X}_s^y(y) \right| 
= \sup_{s \leq t} \left| \tilde{X}_s^y\left(X^k_{\tau_y^{\infty,-}}\right) - \tilde{X}_s^y(y) \right| 
\underset{k \to \infty}{\longrightarrow} 0 \quad \text{a.s.}
\]
\end{proof}

\paragraph{}
Throughout the remainder of this section, the integers $n$ in the definition of $\tilde{X}^n(n)$ are assumed to be larger than $a$, so that $\tau_n^{\infty,-}<\infty$ a.s. By uniqueness of the almost sure limit and as a consequence of Lemma~\ref{lemma feller tilde}, we obtain:
\begin{equation}
\label{eq construction xinf}
\forall t \geq \tau_n^{\infty,-}, \quad X_t^{\infty} = \tilde{X}_{t - \tau_n^{\infty,-}}^{n}(n).
\end{equation}
Since \( \tau_n^{\infty,-} \downarrow 0 \) a.s. as \( n \to \infty \), this provides an explicit representation of \( X^{\infty}_t \) for $t\in (0, \infty) $. Moreover, for any \( n \geq m \) and \( t \geq \tau_n^{\infty,-} \geq \tau_m^{\infty,-} \), Lemma~\ref{lemma cv uniforme vers tilde} ensures that 
\[
\tilde{X}_{t - \tau_n^{\infty,-}}^{n}(n) = \tilde{X}_{t - \tau_m^{\infty,-}}^{m}(m) \quad \text{a.s.},
\]
so the expression in \eqref{eq construction xinf} is almost surely well-defined and independent of \( n \) for \( t > 0 \). Since the processes \( \tilde{X}^y \) have càdlàg paths, the process \( X^{\infty} \) also inherits this property. Finally, because each \( \tilde{X}^n(n) \) solves the stochastic differential equation \eqref{eds shift}, the representation \eqref{eq construction xinf} implies that \( X^{\infty} \) satisfies the same type of equation. This is formalized in the following lemma.

\begin{lemma}
\label{lemma verifie eds} The process $X^{\infty}$ satisfies $X^{\infty}_t\rightarrow \infty$ a.s. when $t \downarrow 0$ and is a strong solution of the stochastic equation :
\begin{multline}
\label{eds infini}
X_t=X_r+ \sigma\int_r^t\sqrt{X_s}\ddr B_s -\gamma\int_r^tX_s\ddr s +\int_r^t\int_0^{X_{s-}}\int_0^1h\tilde{M}(\ddr s,\ddr u,\ddr h) \\ +\int_r^t\int_0^{X_{s-}}\int_1^{\infty}h M(\ddr s,\ddr u,\ddr h)
-\int_r^tI(X_s)\ddr s, \quad 0< r\leq t.
\end{multline}
\end{lemma}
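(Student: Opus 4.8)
The plan is to read off \eqref{eds infini} directly from the representation \eqref{eq construction xinf} together with the already established fact that each $\tilde X^n(n)$ solves the shifted equation \eqref{eds shift}, and to treat the behaviour at $0$ separately by a comparison argument.

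First I would deal with the boundary behaviour. By the comparison property (CP1) and the definition of $X^\infty$ as the almost sure increasing limit $\lim_{x\to\infty}\uparrow X^x$, one has, on a single event of full probability, $X^\infty_t\ge X^x_t$ for every $t\ge0$ and every integer $x$. Since each path of $X^x$ is càdlàg with $X^x_0=x$, letting $t\downarrow0$ gives $\liminf_{t\downarrow0}X^\infty_t\ge x$ for all $x\in\mathbb N$, whence $X^\infty_t\to\infty$ a.s. as $t\downarrow0$.

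For the stochastic equation, I would fix $0<r\le t$ and an integer $n>a$ and work on the event $\{\tau_n^{\infty,-}\le r\}$, whose probability tends to $1$ as $n\to\infty$ since $\tau_n^{\infty,-}\downarrow0$ a.s.\ (as recorded just above the statement). On this event, \eqref{eq construction xinf} gives $X^\infty_s=\tilde X^n_{s-\tau_n^{\infty,-}}(n)$ for all $s\ge\tau_n^{\infty,-}$, in particular for $s\in[r,t]$. Writing the equation \eqref{eds shift} solved by $\tilde X^n(n)$ (with $y=\xi=n$) between the two times $r-\tau_n^{\infty,-}$ and $t-\tau_n^{\infty,-}$, and then carrying out in each integral the change of variable $v=s+\tau_n^{\infty,-}$, one turns the integrals over $[r-\tau_n^{\infty,-},t-\tau_n^{\infty,-}]$ against $B_{\cdot+\tau_n^{\infty,-}}$ and against the shifted measures $\tilde M(\ddr\cdot+\tau_n^{\infty,-},\cdots)$, $M(\ddr\cdot+\tau_n^{\infty,-},\cdots)$ into the corresponding integrals over $[r,t]$ against $B$, $\tilde M$ and $M$, and the drift terms $\int_{r-\tau_n^{\infty,-}}^{t-\tau_n^{\infty,-}}\tilde X^n_s(n)\,\ddr s$ and $\int_{r-\tau_n^{\infty,-}}^{t-\tau_n^{\infty,-}}I(\tilde X^n_s(n))\,\ddr s$ into $\int_r^tX^\infty_v\,\ddr v$ and $\int_r^tI(X^\infty_v)\,\ddr v$. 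This is exactly \eqref{eds infini} restricted to $\{\tau_n^{\infty,-}\le r\}$; taking the union over $n$ yields \eqref{eds infini} a.s.\ for the fixed pair $(r,t)$, and applying this along all rational pairs $0<r\le t$ and using that $X^\infty$ is càdlàg and non-explosive (both inherited from \eqref{eq construction xinf}) while the right-hand side is a.s.\ right-continuous in $r$ and $t$, one extends the identity a.s.\ to all $0<r\le t$.

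The step I expect to be the main obstacle is the change of variable inside the stochastic integrals: one must justify that shifting the driving Brownian motion and Poisson measure by the stopping time $\tau_n^{\infty,-}$ and then substituting $v=s+\tau_n^{\infty,-}$ transforms an integral against the shifted noise over a shifted interval back into an integral against the original noise over $[r,t]$ — in essence a strong-Markov (flow) property for solutions of \eqref{eds}. This is the same mechanism already used in the proof of Proposition~\ref{prop existence tilde} (following \cite[p.~16]{Berestycki}), so I would invoke it rather than redo it; the remaining bookkeeping — matching of initial values at $\tau_n^{\infty,-}$, adaptedness and measurability of $X^\infty$, and finiteness of every integral — is routine.
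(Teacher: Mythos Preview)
Your proposal is correct and follows essentially the same route as the paper: use the representation \eqref{eq construction xinf}, write the shifted equation \eqref{eds shift} for $\tilde X^n(n)$ between the times $r-\tau_n^{\infty,-}$ and $t-\tau_n^{\infty,-}$, and undo the time shift to obtain \eqref{eds infini}. Your handling is in fact a touch cleaner than the paper's on two minor points --- your argument for $X^\infty_t\to\infty$ via $X^\infty_t\ge X^x_t$ and right-continuity of $X^x$ at $0$ bypasses the paper's detour through first-passage times, and working on the events $\{\tau_n^{\infty,-}\le r\}$ and taking the union over $n$ is more careful than the paper's ``let $n$ such that $\tau_n^{\infty,-}\le r$ a.s.'', which is not obviously available for a deterministic $n$ --- but the substance is identical.
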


\begin{proof}
Fix \( t > 0 \). There exists \( n \in \mathbb{N} \) such that \( 0 \leq \tau_n^{\infty,-} \leq t \) almost surely. Then, by Lemma~\ref{lemma cv uniforme vers tilde},
\[
\forall k \in \mathbb{N}, \quad X^{\infty}_t = \tilde{X}_{t - \tau_n^{\infty,-}}^n(n) \geq X_t^k \quad \text{a.s.}
\]
Hence, for any \( x \in (0, \infty) \), we have \( X^{\infty}_t \geq X_t^x \). On the event \( \{0 < t < \tau_y^{x,-}\} \), this yields \( X^{\infty}_t > y \). Since \( \tau_y^{x,-} \uparrow \tau_y^{\infty,-} \) a.s. as \( x \to \infty \), we deduce that \( X^{\infty}_t > y \) on the event \( \{0 < t < \tau_y^{\infty,-}\} \). As \( \tau_y^{\infty,-} > 0 \) a.s., this proves that
\[
X^{\infty}_t \to \infty \text{ as } t \to 0 \text{ a.s.}
\]

Now fix \( t \geq r > 0 \), and let \( n \in \mathbb{N} \) such that \( \tau_n^{\infty,-}\leq r \) a.s. By definition of \( \tilde{X}^n(n) \), we have
\begin{align*}
\tilde{X}_t^n(n) &= n 
+ \sigma \int_0^t \sqrt{\tilde{X}_s^n(n)} \, \ddr B_{s + \tau_n^{\infty,-}} -\gamma\int_0^t\tilde{X}_s^n(n) \, \ddr s \\
&\quad + \int_0^t \int_0^{\tilde{X}_{s-}^n(n)} \int_0^1 h \, \tilde{M}(\ddr s + \tau_n^{\infty,-}, \ddr u, \ddr h) \\
&\quad + \int_0^t \int_0^{\tilde{X}_{s-}^n(n)} \int_1^\infty h \, M(\ddr s + \tau_n^{\infty,-}, \ddr u, \ddr h) \\
&\quad - \int_0^t I(\tilde{X}_s^n(n)) \, \ddr s.
\end{align*}

Making the time change \( s \mapsto s - \tau_n^{-}(\infty) \), we obtain
\begin{align*}
\tilde{X}_{t - \tau_n^{\infty,-}}^n(n) &= n 
+ \sigma \int_{\tau_n^{\infty,-}}^t \sqrt{\tilde{X}_{s - \tau_n^{\infty,-}}^n(n)} \, \ddr B_s -\gamma \int_{\tau_n^{\infty,-}}^t\tilde{X}_{s-\tau_n^{\infty,-}}^n(n) \, \ddr s\\
&\quad + \int_{\tau_n^{\infty,-}}^t \int_0^{\tilde{X}_{(s - \tau_n^{\infty,-}) -}^n(n)} \int_0^1 h \, \tilde{M}(\ddr s, \ddr u, \ddr h) \\
&\quad + \int_{\tau_n^{\infty,-}}^t \int_0^{\tilde{X}_{(s - \tau_n^{\infty,-}) -}^n(n)} \int_1^\infty h \, M(\ddr s, \ddr u, \ddr h) \\
&\quad - \int_{\tau_n^{\infty,-}}^t I\big(\tilde{X}_{s - \tau_n^{\infty,-}}^n(n)\big) \, \ddr s.
\end{align*}

Since \( X^{\infty}_s = \tilde{X}_{s - \tau_n^{\infty,-}}^n(n) \) for all \( s \geq \tau_n^{\infty,-} \), this simplifies to:
\begin{align*}
X^{\infty}_t &= n 
+ \sigma \int_{\tau_n^{\infty,-}}^t \sqrt{X^{\infty}_s} \, \ddr B_s +\gamma \int_{\tau_n^{\infty,-}}^tX^{\infty}_s \, \ddr s\\
&\quad + \int_{\tau_n^{\infty,-}}^t \int_0^{X_{s-}(\infty)} \int_0^1 h \, \tilde{M}(\ddr s, \ddr u, \ddr h) \\
&\quad + \int_{\tau_n^{\infty,-}}^t \int_0^{X_{s-}(\infty)} \int_1^\infty h \, M(\ddr s, \ddr u, \ddr h) \\
&\quad - \int_{\tau_n^{\infty,-}}^t I(X^{\infty}_s) \, \ddr s.
\end{align*}

A similar expression holds for \( X^{\infty}_r \), so subtracting yields:
\begin{align*}
X^{\infty}_t - X^{\infty}_r &= \sigma \int_r^t \sqrt{X^{\infty}_s} \, \ddr B_s - \gamma \int_r^t X^{\infty}_s \, \ddr s\\
&\quad + \int_r^t \int_0^{X_{s-}^{\infty}} \int_0^1 h \, \tilde{M}(\ddr s, \ddr u, \ddr h) \\
&\quad + \int_r^t \int_0^{X_{s-}^{\infty}} \int_1^\infty h \, M(\ddr s, \ddr u, \ddr h) \\
&\quad - \int_r^t I(X^{\infty}_s) \, \ddr s.
\end{align*}
\end{proof}

We have shown that the process starting from infinity $X^{\infty}$ is a solution to the stochastic differential equation \eqref{eds shift} with initial state $\infty$. The following lemma establishes the uniqueness of this solution, which follows naturally from the uniqueness result for the solution of \eqref{eds} stated in Proposition \ref{Prop existence} and the comparison property (CP1).

\begin{lemma}
\label{lemma unicite eds} There is a unique solution $X$ of \eqref{eds infini}, that comes down from infinity and such that $X_t \rightarrow \infty$ a.s. as $t \downarrow 0$.
\end{lemma}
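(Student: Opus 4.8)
The plan is to deduce the statement from the strong well-posedness of \eqref{eds} (Proposition~\ref{Prop existence}) together with the comparison property (CP1). Existence is already in hand: by Lemma~\ref{lemma verifie eds} the process $X^{\infty}$ solves \eqref{eds infini}, satisfies $X^{\infty}_t\to\infty$ a.s.\ as $t\downarrow 0$, and comes down from infinity under the standing assumptions of this section; so only uniqueness remains. Let $X$ be an arbitrary solution of \eqref{eds infini} with $X_t<\infty$ a.s.\ for $t>0$ and $X_t\to\infty$ a.s.\ as $t\downarrow 0$. I would prove $X=X^{\infty}$ by establishing the two inequalities $X\ge X^{\infty}$ and $X\le X^{\infty}$ separately.

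For $X\ge X^{\infty}$: fix $t>0$ and a finite level $x$. Since $X_q\to\infty$ and $X^x_q\to x$ a.s.\ as $q\downarrow 0$, for almost every $\omega$ there is a (random) $q\in(0,t)$ with $X_q\ge X^x_q$. On $[q,\infty)$ both $X$ and $X^x$ solve \eqref{eds} driven by the shifted data $\big(B_{q+\cdot}-B_q,\,M(q+\cdot,\cdot,\cdot)\big)$ — again a Brownian motion and a Poisson random measure of the same intensity, as in the proof of Proposition~\ref{prop feller} — so Proposition~\ref{Prop existence} identifies them with the minimal solutions of \eqref{eds} issued at time $q$, and (CP1) applied from time $q$ gives $X_s\ge X^x_s$ for all $s\ge q$, in particular $X_t\ge X^x_t$. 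Letting $x\to\infty$ yields $X_t\ge X^{\infty}_t$ for every $t>0$; thus $X^{\infty}$ is the \emph{smallest} solution emanating from infinity.

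For $X\le X^{\infty}$ I would use the one-sided Lipschitz condition (B3). Writing $D_s:=X_s-X^{\infty}_s\ge 0$ and subtracting the two copies of \eqref{eds infini} satisfied by $X$ and $X^{\infty}$ — using the white-noise formulation \eqref{whitenoise} for the continuous part and the bound $I(X^{\infty}_s+D_s)-I(X^{\infty}_s)\ge -bD_s$ coming from (B3) — the argument of Lemma~\ref{lemma comparison D} gives, for every $0<q\le t$, the domination $D_t\le \widehat Y_{t-q}$, where $\widehat Y$ is the $\mathrm{CB}(\widehat\Psi)$ with $\widehat\Psi(\cdot)=\Psi(\cdot)-b\,\cdot$ issued from $D_q$ and driven by the noise shifted by $q$. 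Since a continuous-state branching process issued from a value tending to $0$ converges to $0$ (Lemma~1.5 in \cite{DuquesneLabbe}, already invoked for Proposition~\ref{prop feller}), it then remains to check that $D_q\to 0$ a.s.\ as $q\downarrow 0$, i.e.\ that the two solutions, both forced to blow up at time $0$, cannot ``separate'' near the entrance. I expect this last step to be the main obstacle: (B3) only controls the gap from above by a branching process which itself need not come down from infinity, so the entrance condition $X_t\to\infty$ must be genuinely used. A complementary route is to argue from above: restarting $X$ at a small time $q$ from the finite value $X_q$ and using (CP1) gives $X_t\le \lim_{z\to\infty}\uparrow X^{z}_{[q,t]}$, the position at time $t$ of the CBDI entering from infinity at time $q$; invoking the continuity in the initial condition of Proposition~\ref{prop feller} to let $q\downarrow 0$ should push this upper bound down to $X^{\infty}_t$, which combined with $X\ge X^{\infty}$ yields $X=X^{\infty}$ and hence the claimed uniqueness.
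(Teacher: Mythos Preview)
Your overall two-sided sandwich is the right plan, and your lower bound $X\ge X^{\infty}$ is essentially the paper's argument, though you should replace the $\omega$-dependent time $q$ by a deterministic sequence $q_k\downarrow 0$: comparison (CP1) is stated for deterministic (or stopping-time) restarts, and your random $q$ need not even be measurable. The paper does exactly this, passing to the limit along deterministic small times and using the Markov property.

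The genuine gap is in the upper bound $X\le X^{\infty}$. Your first route via (B3) is, as you already note, circular: controlling $D_t$ by a $\mathrm{CB}(\hat\Psi)$ started from $D_q$ only helps if $D_q\to 0$, which is precisely the point at issue. Your second route has the correct setup --- restart at a small time $q$ to get $X_t\le X^{\infty}_{q,t}$, where $X^{\infty}_{q,\cdot}$ is the CBDI entering from $\infty$ at time $q$, then let $q\downarrow 0$ --- but Proposition~\ref{prop feller} is the wrong tool for the last step. That proposition yields continuity in \emph{finite} initial values, not a statement that $X^{\infty}_{q,t}\to X^{\infty}_t$ as $q\downarrow 0$. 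Monotonicity only tells you that $X^{\infty}_{q,t}$ decreases as $q\downarrow 0$ and stays above $X^{\infty}_t$, so the limit could a priori be strictly larger.

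The missing idea, which the paper supplies, is a distributional one: by stationarity of the driving noise, $\big(X^{\infty}_{q,t}\big)_{t\ge q}$ has the same law as $\big(X^{\infty}_{t-q}\big)_{t\ge q}$. Letting $q\downarrow 0$ and using right-continuity of $X^{\infty}$, the pathwise upper bound on $X$ becomes, in law, $X^{\infty}_t$ itself. Combined with the almost sure lower bound $X_t\ge X^{\infty}_t$, one gets equality in law and hence indistinguishability. So the key step you are still missing is this equality-in-law-under-time-shift argument; neither (B3) nor Proposition~\ref{prop feller} can substitute for it.
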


\begin{proof}
Let us first recall an important fact : for any $0 < r$, the solution $\left(Z_t, \ t\geq r\right)$ to the stochastic differential equation
\begin{multline}
\label{eds cut}
Z_t = z + \sigma\int_r^t \sqrt{Z_{s-}} \, \ddr B_s - \gamma \int_r^t Z_s \, \ddr s + \int_r^t \int_0^{Z_{s-}} \int_0^1 h \, \tilde{M}(\ddr s,\ddr u,\ddr h) \\
+ \int_r^t \int_0^{Z_{s-}} \int_1^{\infty} h \, M(\ddr s,\ddr u,\ddr h) - \int_r^t I(Z_s) \, \ddr s
\end{multline}
satisfies a comparison principle with respect to the initial value $z$, similar to property (CP1).

Let $Y$ be a solution of \eqref{eds infini} such that $Y_t \to \infty$ a.s. as $t \to 0$ and $Y$ comes down from infinity, i.e. $$\forall t>0, \quad Y_t<\infty \qquad \textup{a.s.}.$$ Fix $x > 0$. 
By càdlàg property and Fatou's lemma, we have $\underset{t\rightarrow 0}{\liminf} \, \mathbb{P}\left(X_t^x\leq Y_t\right)=1$. By the Markov property at time $t$ and by comparison:
$$\mathbb{P}\left(X_{t+s}^x\leq Y_{t+s}, \ \forall s\geq 0, \ X_t^x\leq Y_t\right)=\mathbb{P}\left(X_t^x\leq Y_t\right).$$
By letting $t$ go to $0$, using right continuity, we get:
$$\mathbb{P}\left(X_s^x\leq Y_s, \ s\geq 0\right)=1.$$
Thus:
\begin{equation}
\label{ineq unic 1}
\forall s\geq 0, \quad X_s^{\infty}\leq Y_s, \quad \text{a.s.}.
\end{equation}

Now let $\left(X^x_{\epsilon,t}, \ t \geq \epsilon\right)$ be the pathwise unique solution of:
\begin{multline}
X_t = x + \sigma \int_{\epsilon}^t \sqrt{X_s} \, \ddr B_s - \gamma \int_\epsilon^t X_s \, \ddr s + \int_{\epsilon}^t \int_0^{X_{s-}} \int_0^1 h \, \tilde{M}(\ddr s,\ddr u,\ddr h) \\
+ \int_{\epsilon}^t \int_0^{X_{s-}} \int_1^{\infty} h \, M(\ddr s,\ddr u,\ddr h) - \int_{\epsilon}^t I(X_s) \, \ddr s
\end{multline}
By the comparison property, 
the following almost sure limit is well-defined
\[
X^{\infty}_{1/n,t} := \lim_{x \to \infty} X^x_{1/n,t},\ \ \forall t \geq 1/n. 
\]
Conditionally on $Y_{1/n}$, since $Y_{1/n} < \infty$ a.s., by comparison again, one has 
\begin{equation}
\label{ineq unic 2}
\forall t \geq 1/n, \quad Y_t \leq X_{1/n,t}^{\infty} \quad \text{a.s.}.
\end{equation}
Next, we perform the time shift $s \mapsto s - 1/n$. The process $\left(X^x_{t-1/n}, \ t \geq 1/n\right)$ satisfies:
\begin{multline}
X_t = x + \sigma \int_{1/n}^t \sqrt{X_s} \, \ddr B_{s - 1/n} - \gamma \int_{1/n}^t X_s \, \ddr s \\
+ \int_{1/n}^t \int_0^{X_{s-}} \int_0^1 h \, \tilde{M}(\ddr s - 1/n, \ddr u, \ddr h)
+ \int_{1/n}^t \int_0^{X_{s-}} \int_1^{\infty} h \, M(\ddr s - 1/n, \ddr u, \ddr h) \\
\!- \int_{1/n}^t I(X_s) \, \ddr s
\end{multline}
The processes $\big(X^x_{1/n,t}, t \geq 1/n\big)$ and $\big(X^x_{t - 1/n}, t \geq 1/n\big)$ have the same law since $\left(B_{s - 1/n}, s \geq 1/n\right)$ and $\tilde{M}(\ddr s - 1/n, \ddr u, \ddr h)$ are independent of each other, and with the same laws as $(B_s,s\geq 0)$ and $M(\ddr s,\ddr u,\ddr h)$, see the proof of Proposition \ref{prop feller}. Combining inequalities \eqref{ineq unic 1} and \eqref{ineq unic 2}, and passing to the limit $x \to \infty$, $n \to \infty$, we obtain that the processes $\left(X^{\infty}_t, t > 0\right)$ and $\left(Y_t, t > 0\right)$ are equal in distribution. Since $\forall t>0, \ X^{\infty}_t\leq Y_t$ a.s., it follows that the two processes are indistinguishable.
\end{proof}

\paragraph{}
Recall the definition of the metric $\rho_{\infty}$ on the space $D$ of $[0, \infty]$-valued càdlàg functions \eqref{def rhoinf}.
It remains to prove the local uniform convergence in the Skorokhod space $D$ of the process started from $x$ towards the process started from infinity, as $x \to \infty$. In our first construction of the process starting from infinity, we established the almost sure pointwise convergence
\[
\forall t \geq 0, \quad X_t^x \xrightarrow[x \to \infty]{\text{a.s.}} X^{\infty}_t.
\]
The following lemma establishes the stronger convergence in $(D, \rho_{\infty})$.

\begin{lemma}
\label{lemma cv rho infinie} For all $t\in [0,\infty)$, \begin{equation}\label{lem:unifconv}\left(X_s^x, \ s\leq t\right) \underset{x\rightarrow \infty}{\longrightarrow} \left(X^{\infty}_s, \ s\leq t\right) \qquad \textup{in} \quad  (D,\rho_{\infty}).\end{equation}
When furthermore the $\mathrm{CBDI}$ process $X^\infty$ gets extinct in finite time a.s. the convergence \eqref{lem:unifconv} holds for $t=\infty$.
\end{lemma}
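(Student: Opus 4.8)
The plan is to recast the convergence in $(D,\rho_{\infty})$ as a Dini-type statement for monotone families of càdlàg functions. For $s\in[0,t]$ put $g^x(s):=e^{-X^x_s}$ and $g^{\infty}(s):=e^{-X^{\infty}_s}$ (with $e^{-\infty}:=0$). These are real-valued càdlàg functions on $[0,t]$, since $e^{-\cdot}$ is continuous on $[0,\infty]$, $X^x$ is càdlàg, and $X^{\infty}$ is càdlàg (established just before Lemma~\ref{lemma verifie eds}); moreover $X^x_s\le X^{\infty}_s$ for all $s$ a.s.\ (by (CP1) and the definition of $X^{\infty}$), so $g^x(s)\ge g^{\infty}(s)$ and
\[
\sup_{s\le t}d(X^x_s,X^{\infty}_s)=\sup_{s\le t}\bigl(g^x(s)-g^{\infty}(s)\bigr).
\]
This quantity is nonincreasing in $x$ (because $x\mapsto X^x_s$ is a.s.\ nondecreasing, simultaneously in $s$), so it suffices to let $x=k$ run through the integers.

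I would then isolate the following Dini-type lemma: if $(g_k)_{k\ge1}$ and $g$ are real càdlàg functions on a compact interval $[0,t]$ with $g_k(s)\downarrow g(s)$ for every $s\in[0,t]$ and $g_k(s-)\downarrow g(s-)$ for every $s\in(0,t]$, then $\sup_{s\le t}|g_k(s)-g(s)|\to0$. The proof is the classical Dini argument run twice: if the supremum stays $\ge\varepsilon$ along all $k$ (the only alternative, by monotonicity in $k$), pick $s_k$ with $g_k(s_k)-g(s_k)\ge\varepsilon/2$ and a subsequential limit $s^{*}$; along a further subsequence $s_{k_j}$ either equals $s^{*}$ (immediate contradiction with $g_k(s^{*})\downarrow g(s^{*})$), or decreases to $s^{*}$ (use right-continuity of $g_K$ and $g$ to get $g_K(s^{*})\ge g(s^{*})+\varepsilon/2$ for all $K$), or increases to $s^{*}>0$ (use the left limits to get $g_K(s^{*}-)\ge g(s^{*}-)+\varepsilon/2$ for all $K$), each contradicting the relevant monotone convergence.

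It remains to verify the two hypotheses for $g^k$ and $g^{\infty}$. The pointwise monotone convergence $g^k(s)\downarrow g^{\infty}(s)$ on $[0,t]$ is immediate from $X^k_s\uparrow X^{\infty}_s$ (at $s=0$ it reads $e^{-k}\downarrow0$). For the left limits, $g^k(s-)=e^{-X^k_{s-}}$ and $g^{\infty}(s-)=e^{-X^{\infty}_{s-}}$, with $k\mapsto X^k_{s-}$ nondecreasing, so one needs $X^k_{s-}\to X^{\infty}_{s-}$ for each $s\in(0,t]$. This is where the shifted-process machinery enters: for every integer $n>a$, Lemma~\ref{lemma cv uniforme vers tilde} combined with the representation \eqref{eq construction xinf} yields $\sup_{\tau_n^{\infty,-}\le u\le t}\bigl(X^{\infty}_u-X^k_u\bigr)\to0$ a.s.\ as $k\to\infty$; uniform convergence of càdlàg functions on $[\tau_n^{\infty,-},t]$ passes to left limits, so $X^k_{u-}\to X^{\infty}_{u-}$ for all $u\in(\tau_n^{\infty,-},t]$, and since $\tau_n^{\infty,-}\downarrow0$ a.s.\ every $s\in(0,t]$ is covered by taking $n$ large. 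The Dini lemma then gives $\sup_{s\le t}d(X^k_s,X^{\infty}_s)\to0$ a.s., hence $\sup_{s\le t}d(X^x_s,X^{\infty}_s)\to0$ a.s.\ by the monotonicity in $x$, which is \eqref{lem:unifconv}.

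For the last assertion, if $X^{\infty}$ reaches $0$ at an a.s.\ finite time $\zeta_0$, then $X^x_s=X^{\infty}_s=0$ for all $s\ge\zeta_0$ (since $0\le X^x_s\le X^{\infty}_s$), whence $\sup_{s\ge0}d(X^x_s,X^{\infty}_s)=\sup_{s\le\zeta_0}d(X^x_s,X^{\infty}_s)$; bounding $\zeta_0$ by an integer $N$ on $\{\zeta_0\le N\}$ and invoking the already proven case $t=N$ gives \eqref{lem:unifconv} with $t=\infty$. The main obstacle is the left-limit convergence $X^k_{s-}\uparrow X^{\infty}_{s-}$: Dini's theorem genuinely fails for càdlàg functions without it, and establishing it near $s=0$, where $X^{\infty}_{s-}=\infty$, is precisely what forces the detour through Lemma~\ref{lemma cv uniforme vers tilde} and the explicit representation of $X^{\infty}$ on $[\tau_n^{\infty,-},t]$; everything else is routine.
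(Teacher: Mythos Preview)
Your proof is correct and takes a genuinely different route from the paper's. The paper argues by an explicit splitting of $[0,t]$: on $[0,\tau_n^{k,-}]$ (with $k$ chosen so that $\tau_n^{k,-}>\tau_{2n}^{\infty,-}$), comparison forces $X^{\infty}_s\ge X^x_s\ge X^k_s\ge n$, whence $|e^{-X^x_s}-e^{-X^{\infty}_s}|\le 2e^{-n}$; on $[\tau_{2n}^{\infty,-},t]$, Lemma~\ref{lemma cv uniforme vers tilde} together with \eqref{eq construction xinf} gives $\sup_s|X^{\infty}_s-X^x_s|\to 0$ directly. Letting $n\to\infty$ concludes.

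You instead isolate a clean Dini-type principle for monotone càdlàg families (pointwise convergence of values \emph{and} of left limits on a compact implies uniform convergence) and reduce everything to checking its hypotheses. The only substantive verification is the left-limit convergence $X^k_{s-}\uparrow X^{\infty}_{s-}$ for $s\in(0,t]$, and for that you invoke exactly the same external input, Lemma~\ref{lemma cv uniforme vers tilde} and \eqref{eq construction xinf}, using that uniform convergence on $[\tau_n^{\infty,-},t]$ passes to left limits. So both proofs rest on the same engine; they differ only in how the neighbourhood of $s=0$ is handled. The paper bounds it explicitly by $2e^{-n}$; in your argument it is absorbed into the Dini machinery via right-continuity at $0$ and $e^{-k}\downarrow0$. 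Your route has the virtue of packaging a reusable lemma; the paper's is shorter and avoids proving an auxiliary result. The treatment of the extinction case is essentially the same in both.
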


\begin{proof}
Fix $t \geq 0$ and $n \in \mathbb{N}$. We know that 
$\tau_n^{k,-} \uparrow \tau_n^{\infty,-}\quad \text{a.s. as } k \to \infty.$
Since the process has no negative jumps, it follows that $\tau_n^{\infty,-} > \tau_{2n}^{\infty,-}$ almost surely. Therefore we can select $k \in \mathbb{N}$ such that 
$\tau_n^{k,-} > \tau_{2n}^{\infty,-}$ a.s.. Fix now $x \geq k$. Using the inequality $1 - e^{-z} \leq z$ for any $z \geq 0$, we can bound the $\rho_\infty$-distance by
\begin{align*}
\rho_\infty\left(\left(X_s^x\right)_{s \leq t}, \left(X^{\infty}_s\right)_{s \leq t}\right) 
&\leq \sup_{0 \leq s \leq \tau_n^{k,-}} \left| e^{-X_s^x} - e^{-X^{\infty}_s} \right| \vee \sup_{\tau_{2n}^{\infty,-} \leq s \leq t} \left| X^{\infty}_s - X_s^x \right|.
\end{align*}
We now control the second term using Lemma~\ref{lemma cv uniforme vers tilde}:
\begin{align*}
\sup_{\tau_{2n}^{\infty,-} \leq s \leq t} \left| X^{\infty}_s - X_s^x \right|
&= \sup_{0 \leq s \leq t - \tau_{2n}^{\infty,-}} \left| \tilde{X}_s^{2n}(2n) - X^x_{s + \tau_{2n}^{\infty,-}} \right| \\
&\leq \ \ \ \ \sup_{0 \leq s \leq t} \left| \tilde{X}_s^{2n}(2n) - X^x_{s + \tau_{2n}^{\infty,-}} \right| \xrightarrow[x \to \infty]{} 0.
\end{align*}
To control the first term, we use the fact that, by definition of $\tau_n^{k,-}$, we have
$\forall s \leq \tau_n^{k,-}, \ X_s(k) \geq n$ a.s.. By the comparison property (CP1), we also have
\[
\forall s \leq \tau_n^{k,-}, \quad X^{\infty}_s \geq X_s^x \geq X_s^k \geq n \quad \text{a.s.}.
\]
 Hence,
$\sup_{0 \leq s \leq \tau_n^{k,-}} \, \left| e^{-X_s^x} - e^{-X^{\infty}_s} \right| \leq 2e^{-n}.$
Since $n$ is arbitrary, by letting $x \to \infty$, we get the desired convergence
\[\rho_\infty\left(\left(X_s^x\right)_{s \leq t}, \left(X^{\infty}_s\right)_{s \leq t}\right) \xrightarrow[x \to \infty]{} 0.\]
Assume now that $X^{\infty}$ gets extinct almost surely. Since $\tau_0^{x,-}\leq \tau_0^{\infty,-}$ a.s., we have for all $x\in (0,\infty],$
$$\rho_{\infty}\big((X_t^x)_{t\geq 0},(X_t^{\infty})_{t\geq 0}\big)=\underset{0\leq t \leq \tau_0^{\infty,-}}{\sup} \, \Big| e^{-X_t^x}-e^{-X_t^{\infty}}\Big|\xrightarrow[x\rightarrow\infty]{}0 \qquad \textup{a.s..}$$
\end{proof}

\noindent\textbf{Proof of Theorem \ref{th descente eds}}.
The result is an immediate consequence of Lemmas \ref{lemma verifie eds}, \ref{lemma unicite eds} and \ref{lemma cv rho infinie}. \qed\\

\noindent \textbf{Acknowledgment}.
This work is part of my PhD thesis carried out under the supervision of Clément Foucart, to whom I am deeply grateful for his guidance and support. This work was done with the support of the European Union (ERC, SINGER, 101054787). Views and opinions
expressed are however those of the authors only and do not necessarily reflect those of the European
Union or the European Research Council. Neither the European Union nor the granting authority can
be held responsible for them.

\section{Appendix}
We establish below some technical or intermediary results that were needed in the proofs.
\subsection{Proof of Proposition \ref{lemma generator}: local martingale problem of CBDI}
The strong Markov property satisfied by the CBDI comes from the strong uniqueness of the solution of \eqref{eds}, see for instance \cite[proof of Theorem 1.1 p. 2949]{PLi}. Take $f \in D_\mathcal{X}\subset C^2$. Recall the expression of the generator \begin{align*}
    \mathcal{X}f(x):=-I(x)f'(x)&-\gamma xf'(x)+\frac{\sigma^2}{2}xf''(x) \nonumber \\
&+x\int_0^{\infty}\left(f(x+u)-f(x)-uf'(x)\mathbf{1}_{\{u\leq 1\}}\right)\pi(\ddr u), \ \ x\in D_f.
\end{align*}
We verify that the process
    $\left(f(X_t)-\int_0^t\mathcal{X}f(X_s)\ddr s\right)_{t\geq 0}$    
is a local martingale by applying It\^o's formula with jumps, see \cite[Theorem 93 on page 59]{Situ}. For all $t\geq 0$,
\begin{align*}
f(X_t)&=f(X_0)-\int_0^tf'(X_s)(\gamma X_s+I(X_s))\ddr s+\sigma\int_0^tf'(X_s)\sqrt{X_s}\ddr B_s+\frac{\sigma^2}{2}\int_0^tf''(X_s)\ddr s \\
&\quad +\int_0^t\int_0^{\infty}\int_0^{\infty}\left[f\left(X_{s-}+h\mathbf{1}_{\{h>1, \, u\leq X_{s-}\}}\right)-f(X_{s-})\right]M(\ddr s,\ddr u,\ddr h) \\
&\quad + \int_0^t\int_0^{\infty}\int_0^{\infty}\left[f\left(X_{s-}+h\mathbf{1}_{\{h\leq 1, \, u\leq X_{s-}\}}\right)-f(X_{s-})\right]\tilde{M}(\ddr s,\ddr u,\ddr h)\\
&\quad + \int_0^t\int_0^{\infty}\int_0^{\infty}\left[f\left(X_{s}+h\mathbf{1}_{\{h\leq 1, \, u\leq X_s\}}\right)-f(X_s)-f'(X_s)h\mathbf{1}_{\{h\leq 1, \, u\leq X_s\}}\right]\ddr s  \ddr  u  \pi(\ddr h)\\
&= f(X_0)-\int_0^tf'(X_s)(\gamma X_s+I(X_s))\ddr s+\sigma\int_0^tf'(X_s)\sqrt{X_s}\ddr B_s+\frac{\sigma^2}{2}\int_0^tf''(X_s)\ddr s \\
&\quad +\int_0^t\int_0^{X_{s-}}\int_1^{\infty}\left[f\left(X_{s-}+h\right)-f(X_{s-})\right]M(\ddr s,\ddr u,\ddr h)\\ 
&\quad + \int_0^t\int_0^{X_{s-}}\int_0^1\left[f\left(X_{s-}+h\right)-f(X_{s-})\right]\tilde{M}(\ddr s,\ddr u,\ddr h) \\
&\quad + \int_0^t\int_0^{X_{s-}}\int_0^1\left[f\left(X_{s}+h\right)-f(X_s)-f'(X_s)h\right]\ddr s  \ddr  u  \pi(\ddr h) \\
&= f(X_0)-\int_0^tf'(X_s)(\gamma X_s+I(X_s))\ddr s+\sigma\int_0^tf'(X_s)\sqrt{X_s}\ddr B_s+\frac{\sigma^2}{2}\int_0^tf''(X_s)\ddr s \\
&\quad +\int_0^t\int_0^{X_{s-}}\int_1^{\infty}\left[f(X_{s-}+h)-f(X_{s-})\right]\tilde{M}(\ddr s,\ddr u,\ddr h) \\
&\quad + \int_0^t\int_0^{X_{s-}}\int_0^1\left[f(X_{s-}+h)-f(X_{s-})\right]\tilde{M}(\ddr s,\ddr u,\ddr h)\\
&\quad + \int_0^t\int_0^{\infty}\left[f(X_{s}+h)-f(X_{s})-hf'(X_{s})\mathbf{1}_{\{h\leq 1\}}\right]X_{s-}\ddr s\pi(\ddr h)\\
&=f(x)+\int_0^t\mathcal{X}f(X_s)\ddr s + \ \textup{local martingale}
\end{align*}
\qed
\subsection{Construction of the sequence $(\phi_k)$}
We adapt the construction given in the proof of Theorem IV-3.2 of \cite{IW}. Let $(a_n)_{n \in \mathbb{N}}$ be a decreasing sequence in $(0,\infty)$ defined by:
\[
a_0 = 1, \quad
a_n = \sqrt{\frac{1}{1 + \frac{n(n-1)}{2}}}, \quad \forall n \geq 1.
\]
Clearly, $a_n \to 0$ as $n \to \infty$. Observe that 
$\int_{a_{k+1}}^{a_k} \frac{2}{kz} \, \ddr z = 2, \ \forall k \in \mathbb{N}.$
This ensures the existence of a function $\psi_k \in C^2$ supported on $[a_{k+1}, a_k]$ such that:
\[
\int_{a_{k+1}}^{a_k} \psi_k(z) \, \ddr z = 1, \quad
\psi_k(z) \leq \frac{2}{kz} \quad \text{for all } z \in [a_{k+1}, a_k].
\]
Now define $\phi_k : \mathbb{R} \to \mathbb{R}$ by:
\[
\phi_k(z) =
\begin{cases}
0, & \text{if } z \leq 0, \\
\displaystyle\int_0^z \ddr y \int_0^y \psi_k(x) \, \ddr x, & \text{if } z > 0.
\end{cases}
\]
Then each $\phi_k$ is a smooth function in $C^2$ satisfying the desired properties : nonnegative, increasing on $(0, \infty)$, compactly supported in $(0, \infty)$, and constructed to localize mass within $(a_{k+1}, a_k)$ in a controlled way.
\bibliographystyle{amsalpha} 

\bibliography{biblio}

\end{document}